\newcommand*{\addFileDependency}[1]{% argument=file name and extension
  \typeout{(#1)}
  \@addtofilelist{#1}
  \IfFileExists{#1}{}{\typeout{No file #1.}}
}
\newcommand*{\myexternaldocument}[1]{%
    \externaldocument{#1}%
    \addFileDependency{#1.tex}%
    \addFileDependency{#1.aux}%
}
\setlist[enumerate,1]{label={(\roman*)}}
\newcommand{\nth}[1]{{#1}^{\mathrm{th}}}
\pgfplotsset{compat=1.16}
\let\norm\relax
\newcommand{\naturals}                       {\mathbb{N}}
\newcommand{\reals}                          {\mathbb{R}}
\newcommand{\realsBar}                       {\bar{\reals}}
\newcommand{\nonnegativeReals}               {\reals_{\geq 0}}
\newcommand{\nonnegativeRealsBar}            {\realsBar_{\geq 0}}
\DeclareMathOperator{\id}                    {id}
\DeclareMathOperator{\proj}                  {proj}
\newcommand{\st}                             {\,|\,}
\renewcommand{\d}{\,\mathrm{d}}
\newcommand{\indexedVar}[3]                 {{#1}_{#2}^{#3}}
\newcommand{\norm}[1]                        {\left\Vert #1\right\Vert}
\newcommand{\expectedValue}[2]               {\mathbb{E}^{#1}\ifstrempty{#2}{}{\left[#2\right]}}
\newcommand{\diracMeasure}[1]                {\delta_{#1}}
\renewcommand{\d}[0]                         {\,\mathrm{d}}
\newcommand{\Pp}[2]                          {\mathcal{P}_{#1}(#2)}
\newcommand{\spaceProbabilityBorelMeasures}[1]{\mathcal{P}\left(#1\right)}
\DeclareMathOperator{\supp}                  {supp}
\newcommand{\wassersteinDistance}[3]         {W_{#1}\ifstrempty{#2}{}{(#2,#3)}}
\newcommand{\plan}[1]{\bm{#1}}
\newcommand{\setPlans}[2]                    {\Gamma(#1\ifstrempty{#2}{}{,#2})}
\newcommand{\SetPlans}[2]                    {\Gamma\left(#1\ifstrempty{#2}{}{,#2}\right)}
\newcommand{\pushforward}[1]                 {{#1}_{\#}}
\renewcommand{\ae}                           {\text{ a.e.}}
\newcommand{\Cb}[1]                          {C_b\ifstrempty{#1}{}{(#1)}}
\newcommand{\comp}[2]                        {#2 \circ #1}%{#1 \fatsemi #2}
\newcommand{\statespace}[1]{X_{#1}}
\newcommand{\otherspace}[1]{Y_{#1}}
\newcommand{\refVar}{r}
\newcommand{\refPVar}{\rho}
\newcommand{\refSpace}[1]{R_{#1}}
\newcommand{\anotherspace}[1]{Z_{#1}}
\newcommand{\inputspace}[1]{U_{#1}}
\newcommand{\probabilityInputSpace}[1]{\mathcal{U}_{#1}}
\newcommand{\noisespace}[1]{W_{#1}}
\newcommand{\costtogo}[1]{J_{#1}}
\newcommand{\stagecost}[1]{G_{#1}}
\newcommand{\costtogosmall}[1]{j_{#1}}
\newcommand{\stagecostsmall}[1]{g_{#1}}
\newcommand{\dynamics}[1]{f_{#1}}
\newcommand{\dynamicsbig}[1]{F_{#1}}
\newcommand{\terminalcost}{G_N}
\newcommand{\terminalcostsmall}{g_N}
\newcommand{\projectionFromTo}[2]{\proj^{#1}_{#2}}
\newcommand{\identityOn}[1]{\id_{#1}}
\newcommand{\inIndexSet}[3]{#1 \in \{#2, \ldots, #3\}}
\newcommand{\inputVariable}[4]{u_{#4}^{#3}\ifstrempty{#1}{}{[#1,#2]}}
\newcommand{\inputMap}[4]{u_{#4}^{#3}\ifstrempty{#1}{}{(#1,#2)}}
\newcommand{\epsilonPlan}[3]{\plan{#1}^{#2}\ifstrempty{#3}{}{_{#3}}}
\newcommand{\transportMap}[2]{T^{#1}_{#2}}
\newcommand{\probabilityInput}[2]{\lambda^{#1}_{#2}}
\newcommand{\kantorovich}[4]{\mathcal{K}[#1]\ifstrempty{#2}{}{(#2\ifstrempty{#4}{\ifstrempty{#3}{}{,}#3}{\ldots,#4})}}
\newcommand{\freemarginals}[4]{\mathcal{J}[#1]\ifstrempty{#2}{}{(#2\ifstrempty{#4}{\ifstrempty{#3}{}{,}#3}{\ldots,#4})}}
\NewDocumentCommand{\statcirc}{ O{#2} m }{%
    \begin{tikzpicture}
    \fill[#2] (0,0) circle (.7ex); % Fill circle with base colour (arg#2)
    \fill[#1] (0,0) -- (215:.6ex) arc (215:395:.6ex) -- cycle; % Fill a half circle filled with second colour (arg#1), if specified
    \end{tikzpicture}
}
\NewDocumentCommand{\statrect}{ O{#2} m }{%
    \begin{tikzpicture}
      \fill[#2]   (0,0) rectangle  ++ (.2,.2);
      \fill[#1]   (.01,.01) -- ++ (.18,.18) |- cycle;
    \end{tikzpicture}
}
\NewDocumentCommand{\statdiam}{ O{#2} m }{%
    \begin{tikzpicture}[xscale=.7,yscale=.9]
      \fill[#2,rotate=45]   (0,0) rectangle  ++ (.2,.2);
      \fill[#1,rotate=45]   (.01,.01) -- ++ (.18,.18) |- cycle;
    \end{tikzpicture}
}
\NewDocumentCommand{\stattrian}{ O{#2} m }{%
    \begin{tikzpicture}
      \fill[#2]   (0,0) -- (.2,.0) -- (.1,.2) -- (.0,.0);
      \fill[#1]   (.1,.01) -- (.19,.01) -- (.1,.19) -- (.1,.01);
    \end{tikzpicture}
}
\newacronym{acr:amod}{AMoD}{autonomous mobility-on-demand}
\newacronym{acr:av}{AV}{autonomous vehicle}
\newacronym{acr:are}{ARE}{Algebraic Riccati Equation}
\newacronym{acr:dare}{DARE}{Discrete-time Algebraic Riccati Equation}
\newacronym{acr:dro}{DRO}{distributionally robust optimization}
\newacronym{acr:dpa}{DPA}{Dynamic Programming Algorithm}
\newacronym{acr:ethz}{ETH Zürich}{Eidgenössische Technische Hochschule Zürich}
\newacronym{acr:kl}{KL}{Kullback-Leibner}
\newacronym{acr:lsc}{lsc}{lower semi-continuous}
\newacronym{acr:oc}{OC}{Optimal Control}
\newacronym{acr:pmp}{PMP}{Pontryagin Maximum Principle}
\newacronym{acr:lqr}{LQR}{Linear Quadratic Regulator}
\newacronym{acr:rl}{RL}{Reinforcement Learning}
\newacronym{acr:spp}{SPP}{Shortest Path Problems}
\newacronym{acr:usc}{usc}{upper semi-continuous}
\newacronym{acr:costcat}{Cost}{Category of costs}
\newacronym{acr:dpcat}{DP}{Category of dynamic programming problems}
\newlist{hypothesis}{enumerate}{10}
\setlist[hypothesis]{label*=(H\arabic*)}
\crefname{hypothesisi}{Hypothesis}{Hypotheses}
\newlist{property}{enumerate}{10}
\setlist[property]{label*=(P\arabic*)}
\crefname{propertyi}{Property}{Properties}
\crefname{plainproblem}{Problem}{Problems}
\newenvironment{problem}%
\theoremstyle{nonumberplain}
\newtheorem{informaltheorem}{Informal Statement}
\title{Dynamic Programming in Probability Spaces via Optimal Transport\thanks{Published in the \emph{SIAM Journal on Control and Optimization}, 62(2), 1183-1206, 2024.
\funding{This research was supported by the Swiss National Science Foundation under the NCCR Automation (grant 51NF40\_180545).
The authors are affiliated with the Automatic Control Laboratory, ETH Zürich (\email{\{aterpin,lnicolas,dorfler\}@ethz.ch}).}}}
\author{Antonio Terpin%
\thanks{Equal contribution.}
\and Nicolas Lanzetti\footnotemark[2]%
\and Florian Dörfler
}
\begin{document}

\maketitle

% REQUIRED
\begin{abstract}\noindent%
We study discrete-time finite-horizon optimal control problems in probability spaces, whereby the state of the system is a probability measure. 
We show that, in many instances, the solution of dynamic programming in probability spaces results from two ingredients: (i) the solution of dynamic programming in the “ground space” (i.e., the space on which the probability measures live) and (ii) the solution of an optimal transport problem. From a multi-agent control perspective, a separation principle holds: The “low-level control of the agents of the fleet” (how does one reach the destination?) and “fleet-level control” (who goes where?) are decoupled.
\end{abstract}
% REQUIRED
\begin{keywords}
Dynamic Programming, Optimal Transport, Multi-agent Systems, Distribution Steering
\end{keywords}

% REQUIRED
\begin{AMS}
90C39, 49Q22, 93A16
\end{AMS}
\section{Introduction}
\label{section:introduction}
Many optimal control problems of stochastic or large-scale dynamical systems can be framed in the probability space, whereby the state is a \emph{probability measure}. We provide three examples, starting with a pedagogical case: 
% Many optimal control problems in stochastic or large-scale systems can be formulated in the probability space, whereby the state of the dynamical system is a \emph{probability measure}. We provide three examples, starting with a pedagogical case: 
\begin{example}[Deterministic optimal control]
\label{example:deterministic-optimal-control}
Consider a discrete-time dynamical system with state space $\statespace{k}$, input space $\inputspace{k}$, and dynamics $\dynamics{k}: \statespace{k}\times \inputspace{k}\to \statespace{k+1}$.
The problem of steering the system from an initial state $x_0\in\statespace{0}$ along an $N \in \naturals$ time-steps long target trajectory $\{\refVar{}_k \in \statespace{k}\}_{k = 0}^N$ (e.g., $\refVar{}_0 = \ldots = \refVar{}_N = 0$ for a regulation problem) while minimizing the sum of the terminal cost $\terminalcostsmall: \statespace{N}\times\statespace{N}\to\nonnegativeRealsBar$ and the stage costs $\stagecostsmall{k}: \statespace{k}\times\inputspace{k}\times\statespace{N}\to\nonnegativeRealsBar$ reads as
\begin{equation}\label{equation:deterministic-optimal-control}
% \costtogosmall{}(x_0, \refVar{}_N)
% =
\inf_{\inputMap{}{}{}{k}: \statespace{k} \to \inputspace{k}}
\terminalcostsmall(x_N, \refVar{}_N)
+
\sum_{k = 0}^{N-1}
\stagecostsmall{k}(x_k, \inputMap{}{}{}{k}(x_k), \refVar{}_k),
\end{equation}
subject to the dynamics. The costs $\stagecostsmall{k}$ and $\terminalcostsmall$ measure the ``closeness'' between the state $x_k$ and the reference $\refVar_k$, as well as the input effort. For instance, when all the spaces are $\reals^n$, they may be defined as $\stagecostsmall{k}(x_k, \inputVariable{}{}{}{k}, \refVar_k) = \norm{x_k - \refVar_k}^2 + \norm{\inputVariable{}{}{}{k}}^2$ and $\terminalcostsmall(x_N, \refVar_N) = \norm{x_N - \refVar_N}^2$. It is instructive to capture this setting via probability measures. At each time-step $k$, consider the Dirac's delta probability measure $\mu_k = \diracMeasure{x_k}$, and let $\refPVar_k = \diracMeasure{\refVar{}_k}$. The relation between $\mu_{k+1}$ and $\mu_k$ is the ``pushforward'' operation $\mu_{k+1} = \diracMeasure{x_{k+1}} = \diracMeasure{\dynamics{k}(x_k, \inputMap{}{}{}{k}(x_k))} = \pushforward{\dynamics{k}(\cdot, \inputMap{}{}{}{k}(\cdot))}{\mu_k}$, a dynamics in the probability space.
An equivalent formulation to \eqref{equation:deterministic-optimal-control} is
\begin{equation}\label{equation:deterministic-optimal-control:probability-space}
\begin{aligned}
% \costtogosmall{}(x_0, \refVar{}_N)
% =
% \costtogo{}(\mu_0, \refPVar_N)
% &=
\inf_{\inputMap{}{}{}{k}: \statespace{k}\to \inputspace{k}}
&
\underbrace{
\int_{\statespace{N}}\int_{\statespace{N}}
\terminalcostsmall(x_N, \refVar{}_N)
\d\mu_N(x_N)\d\refPVar_N(\refVar{}_N)
}_{\terminalcost(\mu_N, \refPVar_N)}
\\&\qquad\qquad
+
\sum_{k = 0}^{N-1}
\underbrace{
\int_{\statespace{k}}\int_{\statespace{k}}
\stagecostsmall{k}(x_k, \inputMap{}{}{}{k}(x_k), \refVar{}_k)
\d\mu_k(x_k)\d\refPVar_k(\refVar{}_k)
}_{\stagecost{k}(\mu_k, \inputMap{}{}{}{k}, \refPVar_k)},
% \\&=
% \inf_{\inputMap{}{}{}{k}: \statespace{k}\to  \inputspace{k}}
% \terminalcost(\mu_N, \refPVar_N)
% +
% \sum_{k = 0}^{N - 1}
% \stagecost{k}(\mu_k, \inputMap{}{}{}{k}, \refPVar_N),
\end{aligned}
\end{equation}
where $
%\costtogo{}, 
\terminalcost$ and $\stagecost{k}$ have the same meaning as the lower-case counterparts in \eqref{equation:deterministic-optimal-control} but are defined in the probability space.
%The formal similarity between \eqref{equation:deterministic-optimal-control} and \eqref{equation:deterministic-optimal-control:probability-space} suggests that one can climb the abstraction ladder and frame~\eqref{equation:deterministic-optimal-control:probability-space} as an optimal control problem, with state $\mu_k$ and inputs $\inputMap{}{}{}{k}$ \cite{Chen2021}. 
\end{example}
\begin{example}[Distribution steering]
Assume now that the initial condition $x_0$ in \cref{example:deterministic-optimal-control} is unknown, but its realization is distributed according to $\mu_0\in\Pp{}{\statespace{0}}$, with $\Pp{}{\statespace{0}}$ being the space of probability measures over $\statespace{0}$. The input to apply to each ``particle'' having state $x_k \in \statespace{k}$ is given by the (deterministic) feedback map $\inputMap{}{}{}{k}:\statespace{k}\to\inputspace{k}$, and the dynamical evolution is $x_{k+1} = \dynamics{k}(x_k, \inputMap{}{}{}{k}(x_k))$. Similarly to \cref{example:deterministic-optimal-control}, the dynamics in the probability space are then $\mu_{k+1} = \pushforward{\dynamics{k}(\cdot, \inputMap{}{}{}{k}(\cdot))}{\mu_k}$. The same formalism of \cref{example:deterministic-optimal-control} can then be used to ensure that the terminal state $x_N$ is distributed closely to a desired probability measure $\refPVar_N \in \Pp{}{\statespace{N}}$:
\begin{equation}\label{equation:example:distribution-steering}
% \costtogo{}(\mu_0, \refPVar_N)
% \coloneqq
\inf_{\inputMap{}{}{}{k}: \statespace{k}\to \inputspace{k}} \;
\sum_{k = 0}^{N}
\mathcal{C}_k(\mu_k, \refPVar_N) 
+
\int_{\statespace{k}}
\norm{\inputMap{}{}{}{k}(x_k)}^2\d\mu_k(x_k),
\end{equation}
where $\mathcal{C}_{k}$ measures closeness between $\mu_k$ and $\refPVar_N$, akin to $\stagecost{k}$ and $\terminalcost{}$ in \eqref{equation:deterministic-optimal-control:probability-space}.
\end{example}

\begin{example}[Large-scale multi-agent systems]
\label{example:multi-agent}
The optimal steering of a fleet of $M$ identical agents, with dynamics $x_{k + 1} ^{(i)} = \dynamics{k}(x_k^{(i)}, \inputMap{}{}{}{k}(x_k^{(i)}))$, from an initial configuration $\{x_k^{(i)}\}_{i = 1}^M$ to a desired one $\refPVar_N$ can be cast as 
\begin{equation}\label{equation:example:multi-agent}
    % \costtogo{}(\{x_0^{(i)}\}_{i = 1}^M, \refPVar_N) 
    % \coloneqq 
    \inf_{\inputMap{}{}{}{k}: \statespace{k} \to \inputspace{k}} 
    \; 
    \sum_{k =  0}^{N} 
    \mathcal{C}_k(\{x_k^{(j)}\}_{j = 1}^M, \refPVar_N)
    +
    \frac{1}{M}
    \sum_{i = 1}^M
    \terminalcostsmall(x_N^{(i)})
    +
    \sum_{k =  0}^{N - 1} 
    \stagecostsmall{k}(x_k^{(i)}, \inputMap{}{}{}{k}(x_k^{(i)})),
\end{equation}
where $\mathcal{C}_k$ is a fleet-specific cost (e.g., a cohesion or formation cost), and  $\terminalcostsmall$ and $\stagecostsmall{k}$ are  agent-specific costs (e.g., input effort). 
Often, the interest lies in the \emph{macroscopic behavior} of the fleet. Hence, it is customary to capture the state of the fleet by a probability measure $\mu_k \in \Pp{}{\statespace{k}}$ and the input by a map $\inputMap{}{}{}{k}: \statespace{k} \to \inputspace{k}$ \cite{HudobadeBadyn2021,Krishnan2019}.
The optimization problem in~\eqref{equation:example:multi-agent} can then be written as an optimal control problem, with state $\mu_k$, input $\inputMap{}{}{}{k}$, and dynamics $\mu_{k+1} = \pushforward{\dynamics{k}(\cdot, \inputMap{}{}{}{k}(\cdot))}{\mu_k}$. Overall, 
\begin{equation}\label{equation:example:multi-agent:revised}
% \begin{aligned}
    % \costtogo{}(\mu_0, \refPVar_N) 
    % &\coloneqq 
    \inf_{\inputMap{}{}{}{k}: \statespace{k} \to \inputspace{k}} \; 
    \int_{\statespace{N}} \terminalcostsmall(x_N) \d\mu_N(x_N)
    % \\&\qquad\qquad
    +
    \sum_{k =  0}^{N - 1} 
    \mathcal{C}_k(\mu_k, \refPVar_N)
    +
    \int_{\statespace{k}} \stagecostsmall{k}(x_k, \inputMap{}{}{}{k}(x_k)) \d\mu_k(x_k).
% \end{aligned}
\end{equation}
Such a modeling approach suits robotics~\cite{Terpin2021}, mobility~\cite{Zardini2021}, and social networks~\cite{Albi2015,Huang2020}.
\end{example}

Formally,~\eqref{equation:deterministic-optimal-control:probability-space},~\eqref{equation:example:distribution-steering}, and~\eqref{equation:example:multi-agent:revised} are instances of discrete-time finite-horizon optimal control problems in probability spaces:
\begin{equation}\label{equation:problem-informal}
    % \costtogo{}(\mu_0, \refPVar_0, \ldots, \refPVar_N) \coloneqq
    \inf_{\inputMap{}{}{}{k}: \statespace{k}\to\inputspace{k}} \; 
    \terminalcost(\mu_N, \refPVar_N)
    +
    \sum_{k =  0}^{N - 1}
    \stagecost{k}(\mu_k, \inputMap{}{}{}{k}, \refPVar_k),
\end{equation}
subject to the measure dynamics $\mu_{k+1} = \pushforward{\dynamics{k}(\cdot, \inputMap{}{}{}{k}(\cdot))}{\mu_k}$, where $\refPVar_k$ are (possibly time-dependent) reference probability measures.
In this paper, we consider $\stagecost{k}, \terminalcost$ as \emph{optimal transport discrepancies}: An optimal transport discrepancy measures the effort to transport one probability measure onto another when moving a unit of mass from $x$ to $y$ costs $c(x,y)$% and is therefore well-suited to measure closeness between probability measures
; see \cref{section:probabilityspace}.
To solve~\eqref{equation:problem-informal}, one possibility is the \gls*{acr:dpa}~\cite{Bertsekas2014}. However, its deployment poses several analytical and computational challenges. For example, it is unclear which easy-to-verify assumptions ensure the existence of solutions. Moreover, even if a minimizer exists, its computation suffers the infinite dimensionality of the probability space and the burden of repeated computations of optimal transport discrepancies; see \cref{section:problem}.

This inevitable complexity prompts us to adopt a different perspective: At least formally,~\eqref{equation:problem-informal} resembles a \emph{single} optimal transport problem~\cite{Ambrosio2008,Villani2007}, whereby one seeks to transport one probability measure $\mu_0$ to a final one $\refPVar_N$ while minimizing some transportation cost. If this formal similarity is made rigorous, we can tackle~\eqref{equation:problem-informal} with the tools of optimal transport theory, which has reached significant maturity in recent years, both theoretically~\cite{Ambrosio2008,Santambrogio2015,Villani2007} and numerically~\cite{Cuturi2013,Peyre2019,Taskesen2022}. Moreover, the available computational libraries (see, e.g.,~\cite{Flamary2021,Schmitzer2019}) provide a wealth of methods, mainly relying on the so-called \emph{regularized} optimal transport problem~\cite{Cuturi2013,Peyre2019}, which is significantly faster to solve and recovers the original formulation and solution as a zero-noise limit.

\subsection{Contributions}
We study the optimal control and dynamic programming in probability spaces through the lens of optimal transport theory. Specifically, we show that many optimal control problems in probability spaces can be reformulated and studied as optimal transport problems. Our results reveal a separation principle: The “low-level control of the agents of the fleet” (how does one reach the destination?) and “fleet-level control” (who goes where?) are decoupled.
We complement our theoretical analysis with various examples and counterexamples, which demonstrate that our conditions cannot be relaxed and expose the pitfalls of heuristic approaches. 
The proofs of our results rely on novel stability results for the (multi-marginal) optimal transport problem, which are of independent interest.

\subsection{Previous work}
Most of the literature focuses on continuous time, and it is founded on~\cite{Benamou2000}, which relates the optimal transport problem and fluid mechanics. Through the optimal control lens, this formulation corresponds to an optimal control problem with integrator dynamics: The resulting flow is a time-dependent feedback law~\cite{Chen2021}. An attempt to introduce generic dynamical constraints can be found in~\cite{Bonnet2023,Cavagnari2020,Cavagnari2023}, where the set of possible flows is constrained in a set of admissible ones, induced by the dynamics. Constructive results can be found in the specific setting of linear systems and Gaussian probability measures. In this case and when the control laws are affine, the space of probability measures is implicitly constrained to the space of Gaussian distributions, and closed-form solutions exist~\cite{Chen2016,Chen2016a,Chen2016b,Chen2018c}. All of these works build on traditional optimal control tools. In~\cite{Bonnet2019b,Bonnet2019c,Bonnet2021}, instead, the authors develop a Pontryagin Maximum Principle for optimal control problems in the Wasserstein space (i.e., probability space endowed with the Wasserstein distance). Their analysis combines classical tools in optimal control theory with the ``differential structure'' of the Wasserstein space~\cite{Ambrosio2008,Bonnet2021,Lanzetti2022}. In \cite{Hindawi2010}, the authors study optimal transport when the transportation cost results from the cost-to-go of a \gls*{acr:lqr}. This methodology implicitly assumes that, to steer a fleet of identical particles, one can compute the cost-to-go for the single particle and then ``lift'' the solution to the probability space via an optimal transport problem. While attractive, this approach generally yields suboptimal solutions; see \cref{section:examples}.
 
The discrete-time setting has, instead, received less attention. Towards this direction,~\cite{Bakolas2016,Bakolas2017,Bakolas2018b,Bakolas2018,Balci2021} explore the covariance control problem for discrete-time linear systems, possibly subject to constraints. In~\cite{Krishnan2019}, the authors study the optimal steering of multiple agents from an initial configuration to a final one in a distributed fashion.
%However, their method works only with distances as costs and integrator dynamics. 
In~\cite{HudobadeBadyn2021}, the authors follow an approach similar to \cite{Hindawi2010}, albeit in discrete time. In \cite{Arque2022}, the authors study the problem of mass transportation over a graph, embedding constraints such as the maximum flow on the edges. To do so, they exploit the structure of the ground space, in this case, the transportation graph. Finally, when the evolution is Markovian and the cost is the Kullback-Leibler divergence, the optimal transport problem over a graph has been addressed in~\cite{Chen2017Network,Chen2021,Chen2018Network,Chen2020Network}.
In all these approaches, the distribution/fleet steering problem is a priori formalized as an optimal transport problem and not as an optimal control problem in the probability space. As we shall see in~\cref{section:lifting}, our results bridge these two perspectives and allow us to back up and recover many of the approaches in the literature.

\subsection{Organization}
The paper unfolds as follows. In~\cref{section:probabilityspace,section:problem}, we review the space of probability measures and introduce our problem setting. We present and discuss our main result, \cref{theorem:lifting}, in \cref{section:lifting}. In~\cref{section:examples}, we provide examples to ignite an intuition on our results and expose potential pitfalls. All proofs are in~\cref{section:proofs}. Finally,~\cref{section:conclusion} summarizes our findings and future directions. 

\subsection{Notation}
We denote by $\Cb{\statespace{}}$ the space of continuous and bounded functions $\statespace{} \to \reals$
and by $\nonnegativeRealsBar=[0,+\infty]$ the set of nonnegative extended real numbers.
The identity map on $\statespace{}$ is denoted by $\identityOn{\statespace{}}$, and the projection maps from $\statespace{}\times\otherspace{}$ onto $\statespace{}$ are denoted by $\projectionFromTo{\statespace{}\times\otherspace{}}{\statespace{}}$.
%; e.g., $\projectionFromTo{\statespace{}\times\otherspace{}}{\statespace{}}: \statespace{}\times\otherspace{}\to\statespace{}$, $\projectionFromTo{\statespace{}\times\otherspace{}}{\statespace{}}(x, y) = x$.
%We distinguish two ways of ``stacking'' functions. 
Given the set of maps $\{h_k: \statespace{} \to \statespace{k}\}_{k=i}^j$ we denote by 
% \begin{align*}
$
(h_i, \ldots, h_j):
\statespace{} \to \statespace{i}\times\ldots\times\statespace{j}
$
the map
$
x
\mapsto 
(h_i, \ldots, h_j)(x) \coloneqq (h_i(x), \ldots, h_j(x)).
$
% \end{align*}
\section{The Space of Probability Measures}
\label{section:probabilityspace}

We start with notation and preliminaries in~\cref{subsection:probabilityspace:basics}. Then, in~\cref{subsection:probabilityspace:functionals}, we review optimal transport.  

\subsection{Preliminaries}\label{subsection:probabilityspace:basics}
We assume all spaces to be Polish spaces and all probability measures and maps to be Borel. 
We denote by $\Pp{}{\statespace{}}$ the space of Borel probability measures on $\statespace{}$, and we denote by $\delta_{x}$ the Dirac's delta at $x\in \statespace{}$; i.e., the probability measure defined for all Borel sets $B \subseteq \statespace{}$ as $\delta_{x}(B) = 1$ if $x \in B$ and $\delta_x(B) = 0$ otherwise.
We denote by $\supp(\mu)$ the support of a probability measure $\mu \in \Pp{}{\statespace{}}$. %That is, for every Borel set $B \subseteq \statespace{} \setminus \supp(\mu)$, $\mu(B) = 0$.
The \emph{pushforward} of a probability measure $\mu \in \Pp{}{\statespace{}}$ through $T: \statespace{} \to \otherspace{}$, denoted by $\pushforward{T}{\mu}\in\Pp{}{\otherspace{}}$, is defined by $(\pushforward{T}{\mu})(A) = \mu(T^{-1}(A))$ for all Borel sets $A \subseteq \otherspace{}$. For any $\pushforward{T}{\mu}$-integrable, $\phi: \otherspace{} \to \reals$ it holds
$
\int_{\otherspace{}} \phi \d(\pushforward{T}{\mu}) = \int_{\statespace{}} \comp{T}{\phi} \d\mu.
$
Given $\nu \in \Pp{}{\otherspace{}}$, $T$ is a \emph{transport map} from $\mu$ to $\nu$ if $\pushforward{T}{\mu} = \nu$; to this extent, it suffices that for all $\phi \in \Cb{\otherspace{}}$
$
    \int_{\otherspace{}} \phi \d \nu = \int_{\otherspace{}} \phi \d (\pushforward{T}{\mu}).% = \int_{\statespace{}} \comp{T}{\phi} \d \mu.
$
We say that $T: \statespace{} \to \otherspace{}\times\anotherspace{}$ is a transport map from $\mu \in \Pp{}{\statespace{}}$ to $(\nu_1, \nu_2) \in \Pp{}{\otherspace{}}\times\Pp{}{\anotherspace{}}$ if $\pushforward{(\projectionFromTo{\otherspace{}\times\anotherspace{}}{\otherspace{}})}{(\pushforward{T}{\mu})} = \nu_1$ and $\pushforward{(\projectionFromTo{\otherspace{}\times\anotherspace{}}{\anotherspace{}})}{(\pushforward{T}{\mu})} = \nu_2$.

\subsection{Optimal transport}\label{subsection:probabilityspace:functionals}

Given a nonnegative \emph{transportation cost} $c: \statespace{} \times \otherspace{} \to \nonnegativeRealsBar$, the \emph{optimal transport discrepancy} $\kantorovich{c}{}{}{}: \Pp{}{\statespace{}}\times \Pp{}{\otherspace{}} \to \nonnegativeRealsBar$ between two probability measures $\mu\in\Pp{}{\statespace{}}$ and $\nu\in\Pp{}{\otherspace{}}$ is
\begin{equation}
\label{equation:cost:optimaltransport:fromreference}
% \begin{aligned}
% \kantorovich{c}{}{}{}:\Pp{}{\statespace{}}\times \Pp{}{\otherspace{}}&\to\nonnegativeRealsBar\\
% (\mu, \nu) &\mapsto \kantorovich{c}{\mu}{}{\nu} = \inf_{\plan{\gamma} \in \setPlans{\mu}{\nu}} \int_{\statespace{} \times \otherspace{}} c \d\plan{\gamma},
% \end{aligned}
% \kantorovich{c}{}{}{}:\Pp{}{\statespace{}}\times \Pp{}{\otherspace{}}\to\nonnegativeRealsBar,\quad 
% \Pp{}{\statespace{}}\times \Pp{}{\otherspace{}} \ni
% (\mu, \nu) \mapsto 
\kantorovich{c}{\mu}{\nu}{} \coloneqq \inf_{\plan{\gamma} \in \setPlans{\mu}{\nu}} \int_{\statespace{} \times \otherspace{}} c(x,y) \d\plan{\gamma}(x,y)
% \in \nonnegativeRealsBar
,
\end{equation}
where $\setPlans{\mu}{\nu}
\coloneqq
\left\{
\plan{\gamma}\in\Pp{}{\statespace{}\times\otherspace{}}
\st
\pushforward{(\projectionFromTo{\statespace{}\times\otherspace{}}{\statespace{}})}{\plan{\gamma}} = \mu,
\pushforward{(\projectionFromTo{\statespace{}\times\otherspace{}}{\otherspace{}})}{\plan{\gamma}} = \nu
\right\}$ is the set of couplings.
% with the set of couplings $\setPlans{\mu}{\nu}$ defined as
% \begin{displaymath}
% \setPlans{\mu}{\nu}
% \coloneqq
% \left\{
% \plan{\gamma}\in\Pp{}{\statespace{}\times\otherspace{}}
% \st
% \pushforward{(\projectionFromTo{\statespace{}\times\otherspace{}}{\statespace{}})}{\plan{\gamma}} = \mu,
% \pushforward{(\projectionFromTo{\statespace{}\times\otherspace{}}{\otherspace{}})}{\plan{\gamma}} = \nu
% \right\}.
% \end{displaymath}
A prominent example of optimal transport discrepancy is, for some $p \geq 1$, the $\nth{p}$ power of the $p$-Wasserstein distance, obtained when $\statespace{} = \otherspace{}$ and the transportation cost $c$ is a metric that induces the topology on $\statespace{}$ \cite[\S 7]{Ambrosio2008}.

\begin{remark}\label{remark:background:expectedvalue}
When the transportation cost does not depend on one of the two variables (e.g., there exists $\tilde{c} \in \statespace{} \to \nonnegativeRealsBar$ such that $c(x, y) = \tilde{c}(x)$), the optimal transport discrepancy reduces to an expected value; i.e., $\kantorovich{c}{\mu}{\nu}{}=\expectedValue{\mu}{\tilde c}$.
\end{remark}

We will repeatedly work with a generalization of the optimal transport problem to $k$ marginals. Let $\statespace{} \coloneqq \statespace{1}\times\ldots\times\statespace{k}$, and $c: \statespace{}\to\nonnegativeRealsBar$. The \emph{multi-marginal} optimal transport problem between $k$ probability measures $\{\mu_i \in \Pp{}{\statespace{i}}\}_{i = 1}^k$ reads as
\begin{equation}
\label{equation:cost:multimarginaloptimaltransport}
\kantorovich{c}{\mu_1}{}{\mu_k} \coloneqq \inf_{\plan{\gamma} \in \setPlans{\mu_1, \ldots}{\mu_k}} 
\int_{\statespace{}}
c(x_1,\ldots,x_k)
\d\plan{\gamma}(x_1,\ldots,x_k),
\end{equation}
where
$
\setPlans{\mu_1}{\ldots, \mu_k}
\coloneqq
\left\{
\plan{\gamma} \in \Pp{}{\statespace{}} 
\st 
\pushforward{(\projectionFromTo{\statespace{}}{\statespace{i}})}{\plan{\gamma}} = \mu_i, \, \inIndexSet{i}{1}{k}
\right\}.
$
In general, the infima in~\eqref{equation:cost:optimaltransport:fromreference} and~\eqref{equation:cost:multimarginaloptimaltransport} are not attained, unless mild conditions on the transportation cost hold true (e.g., $c$ lower semicontinuous in \eqref{equation:cost:optimaltransport:fromreference}~\cite[\S 4]{Villani2007}). A transport plan $\epsilonPlan{\gamma}{\varepsilon}{} \in \setPlans{\mu_1, \ldots}{\mu_k}$ is $\varepsilon$-optimal when
\begin{equation}
\label{equation:transport-plan:epsilon-optimal}
\int_{\statespace{}} c(x_1, \ldots, x_k) \d\epsilonPlan{\gamma}{\varepsilon}{}(x_1, \ldots, x_k) \leq \kantorovich{c}{\mu_1}{}{\mu_k} + \varepsilon.
\end{equation}

The formulation in~\eqref{equation:cost:optimaltransport:fromreference} and \eqref{equation:cost:multimarginaloptimaltransport} is the \emph{Kantorovich formulation} of the optimal transport problem, whereby one optimizes over \emph{transport plans} $\plan{\gamma}$. The (stricter) \emph{Monge formulation}\footnote{Historically, the Monge formulation comes first. For a thorough review of the history of optimal transport and its founding fathers, see \cite[\S 1]{Villani2007}.} considers only transport plans $\plan{\gamma} = \pushforward{(\identityOn{\statespace{1}}, \transportMap{}{})}{\mu_1} \in \setPlans{\mu_1,\ldots}{\mu_k}$ induced by a transport map $T:X_1\to X_2\times\ldots X_k,
\pushforward{T}{\mu_1} = (\mu_2, \ldots, \mu_k)$. 
% Namely,
% \begin{displaymath}
% \monge{c}{\mu_1}{}{\mu_k}
% \coloneqq
% \inf_{\substack{
% T:X_1\to X_2\times\ldots X_k
% \\
% \pushforward{T}{\mu_1} = (\mu_2, \ldots, \mu_k)}}
% \int_{\statespace{}} 
% c(x_1,T(x_1))
% \d\mu_1(x_1).
% \end{displaymath}
% If $\int_{\statespace{}}c\d(\pushforward{(\id_{\statespace{1}}, \transportMap{\varepsilon}{})}{\mu_1} \leq \monge{c}{\mu_1}{}{\mu_k} + \varepsilon$, $\transportMap{\varepsilon}{}: \statespace{1}\to\statespace{2}\times\ldots\times\statespace{k}$ is $\varepsilon$-optimal.

\begin{comment}
For any cost $V: \statespace{} \to \nonnegativeRealsBar$, we define its expected value functional by
\begin{equation}
\label{equation:cost:energy}
    \expectedValue{\cdot}{V}: \Pp{}{\statespace{}} \to \nonnegativeRealsBar,
    \quad
    \mu \mapsto \expectedValue{\mu}{V} \coloneqq \int_{\statespace{}} V \d \mu.
\end{equation}
This functional can be used to encode the average/total cost associated with the probability measure:  
\begin{example}[Cumulative effort]
We can model a fleet of $M$ particles as a uniform distribution over the states of the agents; i.e., $\mu = \frac{1}{M}\sum_{i \in \mathcal{I}}\delta_{x_i}$, with $\mathcal{I}$ being some indexing of the particles. Then, the total energy/fuel $e(x_i)$ used by each particle $x_i$ amounts to the expected value of $Me$:
\begin{displaymath}
    \sum_{i \in \mathcal{I}} e(x_i) = \sum_{i \in \mathcal{I}} M\frac{1}{M} e(x_i) = \sum_{i \in \mathcal{I}} M e(x_i) \mu(x_i) = \expectedValue{\mu}{M e}.
\end{displaymath}
\end{example}
\end{comment}
\section{Problem Statement}
\label{section:problem}
Let $\statespace{k}$, $\inputspace{k}$, and $\refSpace{k}$ be Polish spaces, representing the state space, the input space, and the space of references in the ground space, respectively (often, $\refSpace{k}=\statespace{k}$).
We consider dynamical systems whose state is a probability measure over $\statespace{k}$. This approach encompasses continuous approximations of multi-agent systems and systems with uncertain initial conditions (usually captured by absolutely continuous probability measures), as well as finite settings (captured by empirical probability measures).
\begin{example}[Robots in a grid]
\label{example:robots}
Consider $M$ robots in a grid of three cells; i.e., $\statespace{k}=\{\pm 1,0\}$. Suppose that the $\nth{i}$ robot is located at $x_k^{(i)}\in\statespace{k}$ (i.e., has state $x_k^{(i)}$). Then the state of the system is $\mu_k=\frac{1}{M}\sum_{i=1}^{M}\diracMeasure{x_k^{(i)}}$. The same modeling approach applies to $M$ robots in the two-dimensional plane, simply setting $\statespace{k}=\reals^2$.
\end{example}
In this setting, we focus on the following optimal control problem. 
\begin{problem}
[Discrete-time optimal control in probability spaces]
\label{problem:finitehorizon:rigorous}
Let $N\in\naturals_{\geq 1}$.
For dynamics $\dynamics{k}:\statespace{k}\times\inputspace{k}\to\statespace{k+1}$, costs $\stagecostsmall{k}:\statespace{k}\times\inputspace{k}\times\refSpace{k}\to\nonnegativeRealsBar$ and $\terminalcostsmall{}:\statespace{N}\times\refSpace{N}\to\nonnegativeRealsBar$, initial condition $\mu\in\Pp{}{\statespace{0}}$, and reference trajectory $\refPVar=(\refPVar_0,\ldots,\refPVar_N)\in\Pp{}{\refSpace{0}}\times\ldots\times\Pp{}{\refSpace{N}}$, find the joint state-input distribution $\probabilityInput{}{k}\in\Pp{}{\statespace{k}\times\inputspace{k}}$ which solve
\begin{displaymath}
\begin{aligned}
\costtogo{}(\mu,\refPVar) 
= 
\inf_{\substack{
\mu_k\in\Pp{}{\statespace{k}}
\\
\probabilityInput{}{k}\in\Pp{}{\statespace{k}\times\inputspace{k}}
}}\;&
\kantorovich{\terminalcostsmall{}}{\mu_{N}}{\refPVar_N}{}
 + \sum_{k = 0}^{N - 1}\kantorovich{\stagecostsmall{k}}{\probabilityInput{}{k}}{\refPVar_k}{}
\\
\mathrm{s.t.}\qquad &
\mu_{k + 1} = \pushforward{\dynamics{k}}{\probabilityInput{}{k}},
\quad
\mu_0 = \mu, 
\\
&\pushforward{(\projectionFromTo{\statespace{k}\times\inputspace{k}}{\statespace{k}})}{\probabilityInput{}{k}}=\mu_k.
\end{aligned}
\end{displaymath}
\end{problem}
Before presenting our results, we detail our setting. The notation in the ground space is juxtaposed with the one in the measure space in \cref{table:notation}.
\begin{table}[]
    \centering
    \begin{tabular}{c|c|c}
        %\toprule
        & ground space & measure space
        \\ \hline   
        state & $x_k \in \statespace{k}$ & $\mu_k \in \Pp{}{\statespace{k}}$ 
        \\
        reference & $\refVar_k \in \refSpace{k}$ & $\refPVar_k \in \Pp{}{\refSpace{k}}$ 
        \\
        \multirow{2}{*}{state-input distribution} & \multirow{2}{*}{$x \mapsto \inputMap{}{}{}{k}(x_k) \in \inputspace{k}$}
        %or $\inputMap{}{}{}{k}: \statespace{k}\times\refSpace{k}\times\ldots\times\refSpace{N}\to\inputspace{k}$ 
        & $\probabilityInput{}{k} \in \Pp{}{\statespace{k}\times\inputspace{k}}$
        \\
        & & s.t. $\pushforward{(\projectionFromTo{\statespace{k}\times\inputspace{k}}{\statespace{k}})}{\probabilityInput{}{k}}=\mu_k$
        \\
        dynamics & $x_{k+1} = \dynamics{k}(x_k, \inputMap{}{}{}{k}(x_k))$ & $\mu_{k+1} = \pushforward{\dynamics{k}}{\probabilityInput{}{k}}$
        \\
        cost-to-go & $\costtogosmall{k}$ & $\costtogo{k}$
        \\
        stage and terminal costs & $\stagecostsmall{k}$ and $\terminalcostsmall$ & $\kantorovich{\stagecostsmall{k}}{}{}{}$ and $\kantorovich{\terminalcostsmall}{}{}{}$ \\
        %\bottomrule
    \end{tabular}
    \caption{Parallelism between objects in the ground space and in the measure space.}
    \label{table:notation}
    \ifbool{arxiv}{}{\vspace{-.75cm}}
\end{table}
\subsection{State-input distribution}
\label{section:problem:state-input-distribution}
The state-input distribution $\probabilityInput{}{k}\in\Pp{}{\statespace{k}\times\inputspace{k}}$ is a probability measure on $\statespace{k}\times\inputspace{k}$ whose first marginal is $\mu_k$. The semantics is as follows: The probability mass assigned by $\probabilityInput{}{k}$ to the pair $(x_k,\inputMap{}{}{}{k})$ indicates the probability that one particle has state $x_k \in \statespace{k}$ and applies the input $\inputMap{}{}{}{k}\in\inputspace{k}$ or, equivalently, the share of agents which have state $x_k\in\statespace{k}$ and apply the input $\inputMap{}{}{}{k}\in\inputspace{k}$.
When $\probabilityInput{}{k}=\pushforward{(\identityOn{X},\inputMap{}{}{}{k})}{\mu_k}$ for some $u_k:\statespace{k}\to\inputspace{k}$, the input is ``deterministic'': All particles that have state $x_k\in\statespace{}$ apply the input $\inputMap{}{}{}{k}(x_k)\in\inputspace{k}$. % and, accordingly, $\probabilityInput{}{k}(\{(x_k,\inputVariable{}{}{}{k})\}) = \mu_k(\{x_k\})$.
%We additionally fix the first marginal of $\probabilityInput{}{k}$ to be $\mu_k$, so that $\probabilityInput{}{k}$ is a full description of the state and the input of the dynamic system.
% We exemplify our modeling approach as follows: 
\begin{example}[Robots in a grid, continued]
\label{example:robots:input}
Consider again $M$ identical robots on $\statespace{k}=\{\pm 1,0\}$, where at each time-step each robot can either move to the origin ($\inputVariable{}{}{}{k} = 0$) and stay there forever or change position ($\inputVariable{}{}{}{k} = -1$), so that $\inputspace{k}=\{0,1\}$ and $\dynamics{k}(x_k, \inputVariable{}{}{}{k}) = x_k\inputVariable{}{}{}{k}$. Consider the following input-state distributions $\probabilityInput{(1)}{k}$ and $\probabilityInput{(2)}{k}$.
\begin{center}
    \begin{tabular}{c c|c c}
         \multicolumn{2}{c|}{\multirow{2}{*}{$\probabilityInput{(1)}{k}$}} & \multicolumn{2}{c}{$\inputVariable{}{}{}{k}$} \\
         \multicolumn{2}{c|}{} & 0 & $-1$ \\ \hline 
         \multirow{3}{*}{$x_k$} & $-1$ & 0.2 & 0.3 \\
         & $0$  & 0.5 & 0.0 \\
         & $+1$ & 0.0 & 0.0
    \end{tabular}
    \hspace{2cm} 
    \begin{tabular}{c c| c c}
         \multicolumn{2}{c|}{\multirow{2}{*}{$\probabilityInput{(2)}{k}$}} & \multicolumn{2}{c}{$\inputVariable{}{}{}{k}$} \\
         \multicolumn{2}{c|}{} & 0 & $-1$ \\ \hline 
         \multirow{3}{*}{$x_k$} & $-1$ & 0.0 & 0.5\\
         & $0$  & 0.5 & 0.0\\
         & $+1$ & 0.0 & 0.0\\
    \end{tabular}
\end{center}
In the first case (i.e., $\probabilityInput{(1)}{k}$), $20\%$ of the robots are located at $x_k = -1$ and go to the origin ($\inputVariable{}{}{}{k} = 0$), $30\%$ of the robots are located at $x_k = -1$ and switch position ($\inputVariable{}{}{}{k} = -1$), and $50\%$ of the robots are located at $x_k = 0$ and remain there ($\inputVariable{}{}{}{k} = 0$, despite being irrelevant for the dynamics). The input is not deterministic, since not all robots located at $x_k = -1$ apply the same input. From $\probabilityInput{(1)}{k}$ we can also infer the distribution of the robots: $50\%$ of them are located at $x_k = -1$ and the other $50\%$ at $x_k = 0$. In the second case (i.e., $\probabilityInput{(2)}{k}$), the input is deterministic: All robots located at $x_k = -1$ switch position, and all the robots located at $x_k = 0$ stay there. 
\end{example}
\begin{remark}
We have two comments on our modeling choice.
First, since the first marginal of $\probabilityInput{}{k}$ is $\mu_k$, the costs $\kantorovich{\stagecostsmall{k}}{\probabilityInput{}{k}}{\refPVar_k}{}$ are implicitly a function of the state, the input, and the reference trajectory. 
% Second, the joint state-input distribution can be alternatively represented by the conditional probability measures $\{\probabilityInput{}{k,x}\in\Pp{}{\inputspace{k}}\}_{x\in\statespace{k}}$ over the input space $\inputspace{k}$. By disintegration~\cite[Theorem 5.3.1]{Ambrosio2008}, these two formulations are equivalent. 
Second, in multi-agent settings, one may worry that inputs are incompatible with the fleet size. For instance, in~\cref{example:robots:input}, $\probabilityInput{(1)}{k}$ cannot apply to a fleet of two agents, since it is not possible to have 20\% of the agents at a given state and apply a given input.
Nonetheless, optimal inputs never do that: After a state augmentation (i.e., with $M$ copies of the same state, where $M$ is the fleet size), optimal inputs are guaranteed to be deterministic and hence never ``split an individual agent''.
%Still, the more general joint state-input distribution considerably simplifies the analysis, the same way the Kantorovich formulation is more tractable than the Monge formulation in optimal transport theory.
\end{remark}

\subsection{Dynamics}
\label{section:problem:dynamics}
We consider measure dynamics resulting from the pushforward via a function $\dynamics{k}:\statespace{k}\times\inputspace{k}\to\statespace{k+1}$ (typically, the dynamics of the single particles); i.e., $\mu_{k+1}=\pushforward{\dynamics{k}}{\probabilityInput{}{k}}$. In the special case of deterministic inputs (i.e., $\probabilityInput{}{k}=\pushforward{(\identityOn{X},u_k)}{\mu_k}$ for some function $u_k:\statespace{k}\to\inputspace{k}$), the dynamics simplifies to $\mu_{k+1}=\pushforward{\dynamics{k}(\cdot,u_k(\cdot))}{\mu_k}$.
\begin{example}[Robots in a grid, continued]
\label{example:robots:dynamics}
Consider the setting of~\cref{example:robots:input}, where $\dynamics{k}(x_k, \inputVariable{}{}{}{k}) = x_k\inputVariable{}{}{}{k}$. The measure dynamics are $\mu_{k+1}=\pushforward{\dynamics{k}}{\probabilityInput{}{k}}$, and the two inputs of~\cref{example:robots:input} yield $\mu_{k+1}^{(1)}=0.7\diracMeasure{0}+0.3\diracMeasure{1}$ and $\mu_{k+1}^{(2)}=0.5\diracMeasure{0} + 0.5\diracMeasure{1}$. 
\end{example}

\subsection{Cost}
\label{section:problem:cost}
We consider optimal transport discrepancies with, as transportation costs, $\stagecostsmall{k}:\statespace{k}\times\inputspace{k}\times\refSpace{k}\to\nonnegativeRealsBar$ (stage cost) and $\terminalcostsmall{}:\statespace{N}\times\refSpace{N}\to\nonnegativeRealsBar$ (terminal cost). By~\cref{remark:background:expectedvalue}, this modeling assumption includes expected values but not functionals such as the variance of the probability measure or the \acrlong*{acr:kl} divergence from the references $\refPVar_k$, $\refPVar_N$. Our formulation encompasses the terminal constraint $\mu_N = \refPVar_N$: It suffices to set $\terminalcostsmall(x_N, \refVar_N) = +\infty$ if $x_N \neq \refVar_N$. Similarly, state-dependent input constraints $\inputspace{k}(x_k)$ can be encoded setting $\stagecostsmall{k}(x_k, \inputVariable{}{}{}{k}, \refVar_k) = +\infty$ when $\inputVariable{}{}{}{k} \not\in \inputspace{k}(x_k)$. In view of \cref{example:deterministic-optimal-control}, the transportation costs $\stagecostsmall{k}$ and $\terminalcostsmall{}$ may be interpreted as the cost incurred by a single agent.
\begin{example}[Robots in a grid, continued]
\label{example:robots:cost}
Suppose that the goal is to steer $\frac{M}{2}$ robots to $x_k = -1$ and $\frac{M}{2}$ to $x_k = +1$, while minimizing the input. Then $\refPVar_N=\frac{1}{2}\diracMeasure{-1}+\frac{1}{2}\diracMeasure{+1}$ and, for some weight $\alpha>0$, possible costs are $\terminalcostsmall{}(x_N,\refVar_N)=\abs{x_N-\refVar_N}$ and $\stagecostsmall{k}(x_k,v_k,\refVar_k)=\alpha |v_k|$.
This way, the aim is to minimize the (type 1) Wasserstein distance from the reference $\refPVar_N$ at the end of the horizon (i.e., $\kantorovich{\terminalcostsmall}{\mu_N}{\refPVar_N}{}$) and the (weighted) input effort throughout the horizon (i.e., $ \kantorovich{\stagecostsmall{k}}{\probabilityInput{}{k}}{\refPVar_k}{}=\alpha\expectedValue{\probabilityInput{}{k}}{|v_k|}$). The weight $\alpha>0$ arbitrates between these two objectives. The references $\refPVar_k$ for $\inIndexSet{k}{0}{N-1}$ do not enter in the cost and are therefore irrelevant. 
\end{example}

\subsection{\texorpdfstring{\gls*{acr:dpa}}{DPA} for~\texorpdfstring{\cref{problem:finitehorizon:rigorous}}{Problem 3.2}}

\cref{problem:finitehorizon:rigorous} is a discrete-time finite-horizon optimal control problem in abstract spaces~\cite{Bertsekas2014,Bertsekas2017}. It is therefore natural to deploy the \gls*{acr:dpa}.

\begin{definition}[\texorpdfstring{\gls*{acr:dpa}}{DPA}] \label{definition:dpa}
Initialization: Let $\costtogo{N}(\mu_N,\refPVar_N) \coloneqq \kantorovich{\terminalcostsmall}{\mu_N}{\refPVar_N}{}$.
\\\noindent
Recursion: For all $k\in\{N-1,N-2,\ldots,1,0\}$, compute the cost-to-go $\costtogo{k}$:
\begin{equation}\label{equation:dynamicprogrammingalgorithm}
\begin{aligned}
    \costtogo{k}(\mu_k,\refPVar_k,\ldots,\refPVar_N) &\coloneqq
    \inf_{\substack{
        \probabilityInput{}{k} \in \Pp{}{\statespace{k}\times\inputspace{k}}
        \\
        \pushforward{(\projectionFromTo{\statespace{k}\times\inputspace{k}}{\statespace{k}})}{\probabilityInput{}{k}}=\mu_k}}
    \kantorovich{\stagecostsmall{k}}{\probabilityInput{}{k}}{\refPVar_k}{}
    + 
    \costtogo{k+1}(\pushforward{\dynamics{k}}\probabilityInput{}{k},\refPVar_{k+1},\ldots,\refPVar_N).
\end{aligned}
\end{equation}
\end{definition}
Unfortunately, the \gls*{acr:dpa} in probability spaces poses several analytic and computational challenges; we mention two. 
First, it is unclear under which easy-to-verify assumptions minimizers exist. Second, even if they do, their computation remains challenging, if not prohibitive. Already when all sets are finite, and the (generally infinite-dimensional) probability space reduces to the finite-dimensional probability simplex, \eqref{equation:dynamicprogrammingalgorithm} is excruciating. For instance, the mere evaluation of $\costtogo{N}$ involves solving an optimal transport problem with all the related computational difficulties \cite{Cuturi2013,Genevay2016,Peyre2019,Bahar2022}. Thus, the optimization of $\costtogo{N}$, needed to compute $\costtogo{N-1}$, will inevitably be very demanding.

In the following, we show that the solution of~\cref{problem:finitehorizon:rigorous} can be constructed from the solution of the \gls*{acr:dpa} in the ground space (i.e., $\statespace{0},\statespace{1},\ldots$) and a \emph{single} (possibly multi-marginal) optimal transport problem.
In other words, a separation principle holds: The optimal control law results from the combination of optimal low-level control laws (found via \gls*{acr:dpa} in the ground space) and a fleet-level control law (found via an optimal transport problem). 
This way, we bypass the cumbersome application of \gls*{acr:dpa} in probability spaces as well as the repeated evaluation of optimal transport discrepancies. 
% In other words, the ``extra price'' to work in the probability space instead of in the ground space is the solution to a single optimal transport problem.
At least formally, our result generalizes two well-known extreme cases. 
On the one hand, when considering Dirac's delta probability measures, the \gls*{acr:dpa} in the probability space reduces to the \gls*{acr:dpa} in the ground space (see \cref{example:deterministic-optimal-control}); 
on the other hand, when considering trivial dynamics (i.e., $N=1$ and $\dynamics{0}(x_0,\inputVariable{}{}{}{0})=x_0$) and an optimal transport discrepancy as a terminal cost, \cref{problem:finitehorizon:rigorous} reduces to an optimal transport problem. 
Thus, \gls*{acr:dpa} in probability spaces should be at least ``as difficult as'' solving both the \gls*{acr:dpa} in the ground space and an optimal transport problem. As we shall see below, it is not ``more difficult'' than that. 

\subsection{Auxiliary problem: \texorpdfstring{\gls*{acr:dpa}}{DPA} in the ground space}
Before presenting our main results, we introduce an auxiliary optimal control problem in the ground space:
\begin{equation*}
\costtogosmall{}(x, \refVar_0, \ldots, \refVar_N) 
= 
\begin{aligned}[t]
\inf_{\substack{
x_k\in\statespace{k}
\\
\inputVariable{}{}{}{k} \in \inputspace{k}}}
&
\terminalcostsmall(x_N, \refVar_N)
+
\sum_{k = 0}^{N - 1}
\stagecostsmall{k}(x_k, \inputVariable{}{}{}{k}, \refVar_k)
\\
\text{s.t.  } &x_{k+1}=\dynamics{k}(x_k,\inputVariable{}{}{}{k}), \quad x_0=x. 
\end{aligned}
\end{equation*}
Similarly to \eqref{equation:dynamicprogrammingalgorithm}, the \gls*{acr:dpa} provides the \emph{cost-to-go} $\costtogosmall{k}: \statespace{k}\times \refSpace{k}\times\ldots\times\refSpace{N}\to \nonnegativeRealsBar$:
\begin{equation}\label{equation:dp:originalspace:extended}
\begin{aligned}
    \costtogosmall{N}(x_N,\refVar_N) &\coloneqq \terminalcostsmall(x_N,\refVar_N);\\
    \costtogosmall{k}(x_k, \refVar_k, \ldots, \refVar_N) 
    &\coloneqq 
    \inf_{\inputVariable{}{}{}{k} \in \inputspace{k}}
    \stagecostsmall{k}(x_k,\inputVariable{}{}{}{k},\refVar_k) + \costtogosmall{k+1}(\dynamics{k}(x_k,\inputVariable{}{}{}{k}), \refVar_{k+1}, \ldots, \refVar_N).
\end{aligned}
\end{equation}
Specifically, we use lower-case $\costtogosmall{k}$ for the cost-to-go in the ground space and upper-case $\costtogo{k}$ for its probability space twin.
By~\eqref{equation:dp:originalspace:extended}, ($\varepsilon$-)optimal inputs will be feedback law $\inputMap{}{}{}{k}:\statespace{k}\times\refSpace{k}\times\ldots\times\refSpace{N}\to\inputspace{k}$. In particular, an input $\inputMap{}{}{}{k}\in\inputspace{k}$ (or, with a slight abuse of notation, a feedback law $\inputMap{}{}{}{k}:\statespace{k}\times\refSpace{k}\times\ldots\times\refSpace{N}\to\inputspace{k}$) is $\varepsilon$-optimal in \eqref{equation:dp:originalspace:extended} if
\begin{equation}
\label{equation:input:epsilon-optimality}
\stagecostsmall{k}(x_k,\inputVariable{}{}{}{k},\refVar_k) + \costtogosmall{k+1}(\dynamics{k}(x_k,\inputVariable{}{}{}{k}), \refVar_{k+1}, \ldots, \refVar_N)
\leq
\costtogosmall{k}(x_k, \refVar_k, \ldots, \refVar_N) + \varepsilon.
\end{equation}
\section{Main Result}
\label{section:lifting}
In this section, we present our main result. We first provide an informal statement in~\cref{subsection:lifting:informal}. The rigorous version is in~\cref{subsection:lifting:rigorous}.

\subsection{A separation principle in the probability space}\label{subsection:lifting:informal}
Our main result predicates a separation principle:

\begin{informaltheorem}%[\gls*{acr:dpa} in probability spaces via optimal transport, informal]
\label{theorem:informal}
Consider the setting of~\cref{problem:finitehorizon:rigorous}. At every stage $k$, the following hold:
\begin{enumerate}[leftmargin=*]
    \item The cost-to-go $\costtogo{k}$ is a multi-marginal optimal transport problem between the current state $\mu_k$ and the future references $\refPVar_k,\ldots,\refPVar_N$, with transportation cost being the cost-to-go in the ground space $\costtogosmall{k}$. 
    
    \item The optimal state-input distribution $\probabilityInput{\ast}{k}$ results from the following strategy:
    \begin{enumerate}[label=(\arabic*),leftmargin=*]
        \item Find the optimal input $\inputVariable{}{}{\ast}{k}$ in the ground space.
        
        \item Find the optimal transport plan $\plan{\gamma}_k^\ast$ for the cost-to-go $\costtogo{k}$.
        
        \item Dispatch the particles as prescribed by $\plan{\gamma}_k^\ast$, and apply $\inputVariable{}{}{\ast}{k}$ to steer them to their allocated trajectory. 
    \end{enumerate}
\end{enumerate}
\end{informaltheorem}

In words, to solve~\gls*{acr:dpa} in probability spaces, we first solve for the cost-to-go $\costtogosmall{k}$ in the ground space and then construct a multi-marginal optimal transport problem with transportation cost $\costtogosmall{k}$. Moreover, the optimal input for a fleet of identical agents results from the composition of the optimal control strategy for \emph{each} individual agent
% (how to optimally follow the trajectory $\refVar_k,\ldots, \refVar_N$ for an agent with state $x_k$?)
(what is the optimal feedback law for an agent at $x_k$ that follows the trajectory $\refVar_k,\ldots, \refVar_N$?)
and the solution of a multi-marginal optimal transport problem (who has state $x_k$ and follows the trajectory $\refVar_k, \ldots, \refVar_N$?).
Importantly, our result reveals a separation principle: It is optimal to first devise low-level controllers for individual agents (i.e., $\inputMap{}{}{\ast}{k}$) and then solve an assignment problem to allocate agents to their destinations (i.e., $\plan{\gamma}_k^\ast$).

\subsection{A rigorous statement}\label{subsection:lifting:rigorous}
Next, we rigorously formalize the statements in~\cref{subsection:lifting:informal}.

\begin{theorem}[\gls*{acr:dpa} in probability spaces via optimal transport]\label{theorem:lifting}
Consider the setting of~\cref{problem:finitehorizon:rigorous}.
%and let \cref{hypothesis:lifting} hold. 
At every stage $k$, the following hold: 
\begin{enumerate}[leftmargin=*]
\item
The cost-to-go equals the multi-marginal optimal transport discrepancy
\begin{equation}\label{equation:lifting:optimal-transport}
\begin{aligned}
&\costtogo{k}(\mu_k, \refPVar_k,\ldots,\refPVar_N)
= 
\kantorovich{\costtogosmall{k}}{\mu_k}{\refPVar_k,\ldots,\refPVar_N}{}
\\
&= 
\inf_{\plan{\gamma} \in \setPlans{\mu_k}{\refPVar_k,\ldots,\refPVar_N}}
\int_{\statespace{k}\times\refSpace{k}\times\ldots\times\refSpace{N}}
\costtogosmall{k}(x_k,\refVar_k,\ldots,\refVar_N)
\d
\plan{\gamma}(x_k,\refVar_k,\ldots,\refVar_N),
\end{aligned}
\end{equation}
where $\costtogosmall{k}$ is the cost-to-go in the ground space, as in \eqref{equation:dp:originalspace:extended}.
Moreover, the \gls*{acr:dpa} yields the optimal solution $\costtogo{} = \costtogo{0}$.
\item
For $\varepsilon \geq 0$, suppose $\inputMap{}{}{\varepsilon/2}{k}: \statespace{k}\times\refSpace{k}\times\ldots\times\refSpace{N}\to\inputspace{k}$ and $\epsilonPlan{\gamma}{\varepsilon/2}{k} \in \setPlans{\mu_k}{\refPVar_k,\ldots,\refPVar_N}$ are $\frac{\varepsilon}{2}$-optimal in \eqref{equation:dp:originalspace:extended} and \eqref{equation:lifting:optimal-transport}, respectively.
Then 
\begin{equation}\label{equation:lifting:optimal-transport:input}
\probabilityInput{\varepsilon}{k} = \pushforward{\left(\projectionFromTo{\statespace{k}\times\refSpace{k}\times\ldots\times\refSpace{N}}{\statespace{k}}, \inputMap{}{}{\varepsilon/2}{k}\right)}{\epsilonPlan{\gamma}{\varepsilon/2}{k}}
\end{equation}
is an $\varepsilon$-optimal state-input distribution.
If $\varepsilon=0$, then $\probabilityInput{\ast}{k} \coloneqq \probabilityInput{\varepsilon}{k}$ is optimal. 
\item
If $\epsilonPlan{\gamma}{\varepsilon/2}{k}$ in (ii) is induced by a transport map $\transportMap{\varepsilon/2}{k}: \statespace{k}\to\refSpace{k}\times\ldots\times\refSpace{N}$, the $\varepsilon$-optimal control input reads as $\probabilityInput{\varepsilon}{k} = \pushforward{(\identityOn{\statespace{k}}, \comp{(\identityOn{\statespace{k}}, \transportMap{\varepsilon/2}{k})}{\inputMap{}{}{\varepsilon/2}{k}})}{\mu_k}$.
\end{enumerate}
\end{theorem}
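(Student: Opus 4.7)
The plan is backward induction on $k \in \{N, N-1, \ldots, 0\}$, proving (i)--(iii) simultaneously. The base case $k = N$ is immediate: by initialization of the \gls*{acr:dpa}, $\costtogo{N}(\mu_N,\refPVar_N) = \kantorovich{\terminalcostsmall}{\mu_N}{\refPVar_N}{}$, which coincides with \eqref{equation:lifting:optimal-transport} since $\costtogosmall{N} = \terminalcostsmall$; statements (ii) and (iii) at $k = N$ reduce to the $\varepsilon/2$-optimal choice of a plan (possibly induced by a transport map) in this two-marginal OT problem. Assuming (i) holds at stage $k+1$, the recursion \eqref{equation:dynamicprogrammingalgorithm} together with the inductive hypothesis gives
\begin{displaymath}
\costtogo{k}(\mu_k,\refPVar_k,\ldots,\refPVar_N) = \inf_{\probabilityInput{}{k}} \left[\kantorovich{\stagecostsmall{k}}{\probabilityInput{}{k}}{\refPVar_k}{} + \kantorovich{\costtogosmall{k+1}}{\pushforward{\dynamics{k}}{\probabilityInput{}{k}}}{\refPVar_{k+1},\ldots,\refPVar_N}{}\right],
\end{displaymath}
so the task reduces to identifying this infimum with $\kantorovich{\costtogosmall{k}}{\mu_k}{\refPVar_k,\ldots,\refPVar_N}{}$.

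For the ``$\leq$'' direction (which simultaneously establishes (ii), and by specialization (iii)), I would use exactly the construction \eqref{equation:lifting:optimal-transport:input}. Given $\varepsilon > 0$, pick a Borel $\varepsilon/2$-optimal feedback $\inputMap{}{}{\varepsilon/2}{k}$ satisfying \eqref{equation:input:epsilon-optimality} and an $\varepsilon/2$-optimal multi-marginal plan $\epsilonPlan{\gamma}{\varepsilon/2}{k}$ (cf.\ \eqref{equation:transport-plan:epsilon-optimal}), and define $\probabilityInput{\varepsilon}{k}$ via \eqref{equation:lifting:optimal-transport:input}. Pushing $\epsilonPlan{\gamma}{\varepsilon/2}{k}$ forward through the natural coordinate-plus-feedback maps yields explicit couplings in $\setPlans{\probabilityInput{\varepsilon}{k}}{\refPVar_k}$ and in $\setPlans{\pushforward{\dynamics{k}}{\probabilityInput{\varepsilon}{k}}}{\refPVar_{k+1},\ldots,\refPVar_N}$. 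Summing the resulting suboptimal Kantorovich upper bounds, invoking the pointwise estimate $\stagecostsmall{k} + \costtogosmall{k+1}\circ\dynamics{k} \leq \costtogosmall{k} + \varepsilon/2$ from \eqref{equation:input:epsilon-optimality}, and integrating against $\epsilonPlan{\gamma}{\varepsilon/2}{k}$ (itself $\varepsilon/2$-optimal for $\costtogosmall{k}$) produces a total cost at most $\kantorovich{\costtogosmall{k}}{\mu_k}{\refPVar_k,\ldots,\refPVar_N}{} + \varepsilon$. Claim (iii) then follows by substituting $\epsilonPlan{\gamma}{\varepsilon/2}{k} = \pushforward{(\identityOn{\statespace{k}},\transportMap{\varepsilon/2}{k})}{\mu_k}$ into \eqref{equation:lifting:optimal-transport:input}.

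For the ``$\geq$'' direction, fix an arbitrary $\probabilityInput{}{k}$ with first marginal $\mu_k$ and arbitrary couplings $\plan{\pi}_1 \in \setPlans{\probabilityInput{}{k}}{\refPVar_k}$, $\plan{\pi}_2 \in \setPlans{\pushforward{\dynamics{k}}{\probabilityInput{}{k}}}{\refPVar_{k+1},\ldots,\refPVar_N}$. Iteratively applying the gluing lemma along the common marginals $\probabilityInput{}{k}$ and $\pushforward{\dynamics{k}}{\probabilityInput{}{k}}$, using in particular the graph coupling $\pushforward{(\identityOn{\statespace{k}\times\inputspace{k}},\dynamics{k})}{\probabilityInput{}{k}}$ that links them, one assembles a measure $\sigma$ on $\statespace{k}\times\inputspace{k}\times\refSpace{k}\times\statespace{k+1}\times\refSpace{k+1}\times\cdots\times\refSpace{N}$ concentrated on $\{x_{k+1} = \dynamics{k}(x_k,u_k)\}$ whose relevant marginals recover $\plan{\pi}_1$ and $\plan{\pi}_2$. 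Its projection onto $\statespace{k}\times\refSpace{k}\times\cdots\times\refSpace{N}$ is admissible in $\setPlans{\mu_k}{\refPVar_k,\ldots,\refPVar_N}$, and integrating the pointwise inequality $\stagecostsmall{k}(x_k,u_k,\refVar_k) + \costtogosmall{k+1}(\dynamics{k}(x_k,u_k),\refVar_{k+1},\ldots,\refVar_N) \geq \costtogosmall{k}(x_k,\refVar_k,\ldots,\refVar_N)$ (immediate from the definition of $\costtogosmall{k}$ as an infimum over $u_k$) against $\sigma$ yields the required lower bound on the sum of Kantorovich terms, closing the induction.

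The main obstacle I anticipate is the gluing step: because $\costtogosmall{k+1}$ is only known to be Borel through the induction and the coupling $\plan{\pi}_2$ sits on a possibly non-deterministic leg, care is needed to ensure that the iterated gluing delivers a genuine Borel probability measure $\sigma$ satisfying all the required multi-space marginal constraints. This is presumably where the new stability results for multi-marginal optimal transport highlighted in the contributions are required.
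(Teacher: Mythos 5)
Your proposal is correct and arrives at the theorem, but it takes a genuinely different route to part (i) than the paper. The paper establishes $\costtogo{k}=\kantorovich{\costtogosmall{k}}{\mu_k}{\refPVar_k,\ldots,\refPVar_N}{}$ through a chain of \emph{equalities}: it first collapses the one-step problem into a single optimal-transport problem with a free input marginal and cost $c_k=\stagecostsmall{k}+\comp{(\dynamics{k}\times\identityOn{\refSpace{k+1}\times\ldots\times\refSpace{N}})}{\costtogosmall{k+1}}$ via two general stability lemmata (\cref{lemma:pushforward:stability} and \cref{lemma:compositionality:free-marginals}), then disintegrates over the input (\cref{lemma:disintegration}) and exchanges the inner infimum over input \emph{measures} with the pointwise infimum over inputs, a step that needs a Borel $\varepsilon/2$-optimal selection and the fact that $x\mapsto\delta_{\phi(x)}$ is Borel (\cref{lemma:borel_lifting}). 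You instead prove the two inequalities separately: ``$\leq$'' by the explicit construction \eqref{equation:lifting:optimal-transport:input} (which, exactly as in the paper, doubles as the proof of (ii) once the identity is in place, and of (iii) by substitution), and ``$\geq$'' by gluing arbitrary couplings $\plan{\pi}_1,\plan{\pi}_2$ through the graph coupling $\pushforward{(\identityOn{\statespace{k}\times\inputspace{k}},\dynamics{k})}{\probabilityInput{}{k}}$ and integrating the pointwise Bellman inequality $\stagecostsmall{k}+\comp{\dynamics{k}}{\costtogosmall{k+1}}\geq\costtogosmall{k}$ against the glued measure; this is sound (the glued $\sigma$ is concentrated on the graph of $\dynamics{k}$, and its projection has marginals $\mu_k,\refPVar_k,\ldots,\refPVar_N$) and is in effect a specialized, streamlined version of what \cref{lemma:pushforward:stability,lemma:compositionality:free-marginals} accomplish, bypassing the free-marginal formulation and the disintegration/inf-exchange entirely because randomized inputs are handled implicitly by quantifying over all $\probabilityInput{}{k}$. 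What you give up are the standalone stability results the paper advertises as of independent interest; what you gain is a shorter, more elementary proof of the theorem itself. Two caveats: the obstacle is not where you place it---gluing Borel measures is unproblematic---rather, the genuine technical points (which the paper defers to its ``Measurability issues'' discussion) are the existence of a measurable (universally measurable, taken Borel w.l.o.g.) $\varepsilon/2$-optimal feedback selection used in your ``$\leq$'' step, and the fact that $\costtogosmall{k}$ is in general only lower semi-analytic, so the integrals against it must be interpreted accordingly; also, you leave the closing claim $\costtogo{}=\costtogo{0}$ unaddressed, which the paper dispatches with the standard additivity argument for dynamic programming.
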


% \begin{remark}
% In many practical settings (e.g., in the \gls*{acr:lqr} formulation), the reference at time $k$ does not affect the optimal input at time $k$. In such cases, the optimization in the ground space involves only the residual trajectory $\refVar_{k+1}, \refVar_{k+2}, \ldots, \refVar_N$. A similar effect can be observed in the probability spaces, as the reference probability measure $\refPVar_k$ will be coupled only with $\mu_k$ and thus the computation of the optimal control law $\lambda_k$ amounts to solving a $(N - k + 1)$-marginals optimal transport problem, dropping the marginal $\refPVar_k$.
% Instead, when the stage costs also depend on the next state $\mu_{k+1}$ via an optimal transport discrepancy, the price to pay is an additional marginal. 
% \end{remark}

Before discussing~\cref{theorem:lifting} and its implications, we consider the special case when the stage costs $g_k$ do not depend on the reference; i.e., $\stagecostsmall{k}: \statespace{k}\times\inputspace{k}\to\nonnegativeRealsBar$. For instance, any shortest path problem on a graph can be converted into a finite-horizon optimal control problem (see, e.g., \cite{Bertsekas2017}), where the weights of the edges determine the stage costs $\stagecostsmall{k}$; these depend only on the pair $(x_k, \inputMap{}{}{}{k})$. In these cases, the \gls*{acr:dpa} reads
\begin{equation}\label{equation:dp:originalspace}
\begin{aligned}
    \costtogosmall{N}(x_N, \refVar_N) &\coloneqq \terminalcostsmall(x_N,\refVar_N);\\
    \costtogosmall{k}(x_k, \refVar_N) &\coloneqq \inf_{\inputMap{}{}{}{k}\in \inputspace{k}}\stagecostsmall{k}(x_k,\inputMap{}{}{}{k}) + \costtogosmall{k+1}(\dynamics{k}(x_k,\inputMap{}{}{}{k}), \refVar_N).
\end{aligned}
\end{equation}
Accordingly, the ground space $\frac{\varepsilon}{2}$-optimal input is of the form $\inputMap{}{}{\varepsilon/2}{k}:\statespace{k}\times\refSpace{N}\to\inputspace{k}$ and the cost-to-go $\costtogo{k}$ simplifies to a two-marginals optimal transport discrepancy: 

\begin{corollary}[When two marginals are all you need]\label{corollary:lifting:ot}
Consider the setting of \cref{theorem:lifting}, with $\stagecostsmall{k}:\statespace{k}\times\inputspace{k}\to\nonnegativeRealsBar$. At every stage $k$, the following hold: 
% If there exist maps $\tilde{\stagecostsmall{k}}:\statespace{k}\times\inputspace{k}\to\nonnegativeRealsBar$ such that $\stagecostsmall{k} = \comp{(\projectionFromTo{\statespace{k}\times\inputspace{k}\times\refSpace{k}}{\statespace{k}\times\inputspace{k}})}{\tilde{\stagecostsmall{k}}}$, then:
\begin{enumerate}[leftmargin=*]
\item 
The cost-to-go equals the optimal transport discrepancy
\begin{equation}\label{equation:corollary:lifting:ot}
\costtogo{k}(\mu_k, \refPVar_N)
= 
\kantorovich{\costtogosmall{k}}{\mu_k}{\refPVar_N}{} 
= 
\inf_{\plan{\gamma} \in \setPlans{\mu_k}{\refPVar_N}}
\int_{\statespace{k}\times\refSpace{N}}
\costtogosmall{k}(x_k,\refVar_N)
\d
\plan{\gamma}(x_k,\refVar_N),
\end{equation}
where $\costtogosmall{k}$ is the cost-to-go in the ground space, as in \eqref{equation:dp:originalspace}.
Moreover, the \gls*{acr:dpa} yields the optimal solution $\costtogo{} = \costtogo{0}$.
\item 
For $\varepsilon \geq 0$, suppose $\inputMap{}{}{\varepsilon/2}{k}: \statespace{k}\times\refSpace{N}\to\inputspace{k}$ and $\epsilonPlan{\gamma}{\varepsilon/2}{k} \in \setPlans{\mu_k}{\refPVar_N}$ are $\frac{\varepsilon}{2}$-optimal in \eqref{equation:dp:originalspace} and \eqref{equation:lifting:optimal-transport}, respectively.
Then
\begin{equation}\label{equation:corollary:lifting:ot:input}
\probabilityInput{\varepsilon}{k} = \pushforward{\left(\projectionFromTo{\statespace{k}\times\refSpace{N}}{\statespace{k}}, \inputMap{}{}{\varepsilon/2}{k}\right)}{\epsilonPlan{\gamma}{\varepsilon/2}{k}}
\end{equation}
is an $\varepsilon$-optimal state-input distribution.
If $\varepsilon=0$, then $\probabilityInput{\ast}{k} \coloneqq \probabilityInput{\varepsilon}{k}$ is optimal.
\item If $\epsilonPlan{\gamma}{\varepsilon}{k}$ in (ii) is induced by a transport map $\transportMap{\varepsilon/2}{k}: \statespace{k}\to\refSpace{N}$, the $\varepsilon$-optimal control input reads as $\probabilityInput{\varepsilon}{k} = \pushforward{(\identityOn{\statespace{k}}, \comp{(\identityOn{\statespace{k}}, \transportMap{\varepsilon/2}{k})}{\inputMap{}{}{\varepsilon/2}{k}})}{\mu_k}$.
\end{enumerate}
\end{corollary}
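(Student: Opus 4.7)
The plan is to derive \cref{corollary:lifting:ot} as a direct specialization of \cref{theorem:lifting}. The key preliminary observation is that, when $\stagecostsmall{k}:\statespace{k}\times\inputspace{k}\to\nonnegativeRealsBar$ is independent of the reference, the ground-space cost-to-go produced by the backward recursion \eqref{equation:dp:originalspace:extended} depends on the residual reference trajectory $(\refVar_k,\ldots,\refVar_N)$ only through the \emph{terminal} entry $\refVar_N$. I would establish this by a backward induction on $k$: at $k=N$, we have $\costtogosmall{N}(x_N,\refVar_N)=\terminalcostsmall(x_N,\refVar_N)$ by definition; for the inductive step, the minimand in \eqref{equation:dp:originalspace:extended} equals $\stagecostsmall{k}(x_k,\inputVariable{}{}{}{k})+\costtogosmall{k+1}(\dynamics{k}(x_k,\inputVariable{}{}{}{k}),\refVar_N)$, where neither the first nor (by the induction hypothesis) the second term involves $\refVar_k,\ldots,\refVar_{N-1}$. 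This yields the ground-space recursion \eqref{equation:dp:originalspace} and produces, for every $\varepsilon>0$, a Borel $\frac{\varepsilon}{2}$-optimal feedback with the simplified signature $\inputMap{}{}{\varepsilon/2}{k}:\statespace{k}\times\refSpace{N}\to\inputspace{k}$.

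Next, I would show that the multi-marginal optimal transport problem in \cref{theorem:lifting}(i) collapses to the two-marginal problem \eqref{equation:corollary:lifting:ot}. Writing $\costtogosmall{k}(x_k,\refVar_k,\ldots,\refVar_N)=\costtogosmall{k}(x_k,\refVar_N)$ via the above reduction, one inequality is immediate: any $\plan{\gamma}\in\setPlans{\mu_k}{\refPVar_k,\ldots,\refPVar_N}$ projects via $\pushforward{(\projectionFromTo{\statespace{k}\times\refSpace{k}\times\ldots\times\refSpace{N}}{\statespace{k}\times\refSpace{N}})}{\plan{\gamma}}$ to a coupling in $\setPlans{\mu_k}{\refPVar_N}$ with identical cost. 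For the converse, given any $\plan{\gamma}'\in\setPlans{\mu_k}{\refPVar_N}$, I would extend it to a multi-marginal plan by taking the product with $\refPVar_k\otimes\cdots\otimes\refPVar_{N-1}$ (interleaved so that the $\statespace{k}$- and $\refSpace{N}$-marginals come from $\plan{\gamma}'$). This yields an element of $\setPlans{\mu_k}{\refPVar_k,\ldots,\refPVar_N}$ of the same cost, so the two infima agree, and claim (i) follows from \cref{theorem:lifting}(i).

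Claim (ii) is then read off directly from \cref{theorem:lifting}(ii): one plugs in the simplified $\inputMap{}{}{\varepsilon/2}{k}$ and a $\frac{\varepsilon}{2}$-optimal $\epsilonPlan{\gamma}{\varepsilon/2}{k}\in\setPlans{\mu_k}{\refPVar_N}$ of the two-marginal problem \eqref{equation:corollary:lifting:ot}, obtaining \eqref{equation:corollary:lifting:ot:input} as a special case of \eqref{equation:lifting:optimal-transport:input} with domain $\statespace{k}\times\refSpace{N}$ in place of $\statespace{k}\times\refSpace{k}\times\ldots\times\refSpace{N}$. Claim (iii) is an immediate consequence of (ii) together with the pushforward identity
\begin{equation*}
\pushforward{\left(\projectionFromTo{\statespace{k}\times\refSpace{N}}{\statespace{k}},\inputMap{}{}{\varepsilon/2}{k}\right)}{\pushforward{(\identityOn{\statespace{k}},\transportMap{\varepsilon/2}{k})}{\mu_k}}
=
\pushforward{\left(\identityOn{\statespace{k}},\comp{(\identityOn{\statespace{k}},\transportMap{\varepsilon/2}{k})}{\inputMap{}{}{\varepsilon/2}{k}}\right)}{\mu_k},
\end{equation*}
which is verified by composing pushforwards.

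The only mild technical point is ensuring that the extended coupling and the simplified $\inputMap{}{}{\varepsilon/2}{k}$ remain Borel, but both are inherited from the product structure and from the regularity of the $\varepsilon$-optimal selectors in the ground-space \gls*{acr:dpa}. Since the corollary is a specialization, no new obstacle beyond those already overcome in \cref{theorem:lifting} arises; the main conceptual step is simply the reference-independence reduction of $\costtogosmall{k}$.
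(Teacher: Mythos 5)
Your proposal is correct, but it takes a different route from the paper. The paper does not deduce the corollary from \cref{theorem:lifting} a posteriori; it re-runs the backward induction of that proof, replacing \cref{lemma:compositionality:free-marginals} (sum of optimal transport discrepancies as a multi-marginal problem) with \cref{lemma:compositionality:optimal-transport} (expected value plus optimal transport as a single two-marginal problem), so that only the marginal $\refPVar_N$ is ever carried along the recursion. You instead specialize the theorem directly: first the (easy) backward induction showing $\costtogosmall{k}$ depends on the references only through $\refVar_N$, and then the collapse of the multi-marginal transport problem to \eqref{equation:corollary:lifting:ot} via the projection inequality in one direction and the product extension with $\refPVar_k\otimes\cdots\otimes\refPVar_{N-1}$ in the other; both steps are sound. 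Your approach buys economy and a cleaner logical dependency (the corollary genuinely follows from the theorem, with only elementary marginal manipulations); the paper's approach keeps the argument uniform with \cref{theorem:lifting} and delivers claim (ii) without any transfer step, since the two-marginal objects appear natively in the recursion. The one point you should make explicit in (ii) is exactly that transfer: to invoke \cref{theorem:lifting}(ii) verbatim you must lift the given $\frac{\varepsilon}{2}$-optimal two-marginal plan to a multi-marginal one (your product extension, whose $\frac{\varepsilon}{2}$-optimality follows because the two optimal values coincide and the costs agree), view $\inputMap{}{}{\varepsilon/2}{k}$ as constant in the intermediate references, and then observe that the map $\left(\projectionFromTo{\statespace{k}\times\refSpace{k}\times\ldots\times\refSpace{N}}{\statespace{k}},\inputMap{}{}{\varepsilon/2}{k}\right)$ factors through the $(\statespace{k}\times\refSpace{N})$-projection, so the resulting state-input distribution is exactly \eqref{equation:corollary:lifting:ot:input}; this is a routine completion of what you wrote, not a gap in the idea.
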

% \begin{remark}
%     More generally, with $\mathcal{K}_k$ being the indices of the stage costs $\stagecostsmall{h}, \inIndexSet{h}{k}{N}$, which depend on the last argument, the cost-to-go $\costtogo{k}$ is a multi-marginal transport problem with $2+|\mathcal{K}_k|$ marginals.
% \end{remark}

We defer the proofs of these results to~\cref{section:proofs}.
%First, we discuss them in words (\cref{section:discussion}) and present some preliminary technical results (\cref{subsec:technical results}).

\subsection*{Discussion}\label{section:discussion}
A few comments on our results are in order. 

\begin{paragraph}{How does one construct optimal state-input distributions?}
We start with more details on~\eqref{equation:lifting:optimal-transport:input} and~\eqref{equation:corollary:lifting:ot:input}.
For simplicity, assume that an optimal input map $\inputMap{}{}{\ast}{k}$ and an optimal transport plan $\epsilonPlan{\gamma}{\ast}{k}$ exist (else, resort to an $\varepsilon$ argument). Then (ii) in \cref{theorem:lifting,corollary:lifting:ot} predicate that an optimal state-input distribution $\probabilityInput{\ast}{k}$ for~\cref{problem:finitehorizon:rigorous} results from the~\gls{acr:dpa} in the ground space (i.e., $\inputMap{}{}{\ast}{k}$) and the solution of an optimal transport problem (i.e., $\epsilonPlan{\gamma}{\ast}{k}$):
\begin{enumerate}[leftmargin=*]
    \item\textit{Optimal particle allocation:}
The transport plan $\epsilonPlan{\gamma}{\ast}{k} \in \setPlans{\mu_k}{\refPVar_k,\ldots,\refPVar_N}$ describes the optimal allocation of the particles throughout the horizon. In discrete instances, $\epsilonPlan{\gamma}{\ast}{k}(x_k, \refVar_k, \ldots, \refVar_N)$ quantifies the share of agents with state $x_k$ that will follow the reference trajectory $\refVar_k,\ldots,\refVar_N$.

% The role of the multi-marginal formulation is further explored in \cref{section:examples}, but consider now the case $\refSpace{h} = \statespace{h}$, $\refPVar_h = \refPVar$, $\forall \inIndexSet{h}{k}{N}$. One can interpret this setting as a steering problem, where $\refPVar$ is the target configuration. A particle $x \in \supp(\mu_k)$ does not change target location throughout the horizon if $\plan{\gamma}(x, y, \dots, y) = \mu_k(x)$ for some $y \in \supp(\refPVar)$. However, such a constraint is not enforced in \cref{theorem:lifting}. That is, there might exists $\indexedVar{y}{k}{N} \in \setProduct{\refSpace{h}}{h}{k}{N}$ and $\inIndexSet{i,j}{k}{N}$ such that $\refVar_i \neq \refVar_j$ but $\plan{\gamma'}(x, \indexedVar{y}{k}{N}) > 0$. That is, a ``re-allocation'' of $x$ from $\refVar_i$ to $\refVar_j$ might be required to obtain optimality. Macroscopically, it does not make a difference which is the particle $x$ that is compared to which $y$. Hence, even if the target configuration $\refPVar$ does not change, the possibility of re-allocation must be considered. Instead, in the settings of \cref{corollary:lifting:ot}, such re-allocation is never necessary, and $\plan{\gamma}(x, y)$ describes the number of particles at $x$ aiming at $y$.
    \item\textit{Optimal input coupling:}
% The second ingredient is $\plan{\gamma} \in \Pp{}{\statespace{k}\times\setProduct{\refSpace{i}}{i}{k}{N}\times\inputspace{k}}$, which is the result of the composition of $\plan{\gamma'}$ with the ($\varepsilon$-)optimal low level control law $u_{xy}$. Roughly speaking, $\plan{\gamma}$ assigns probability mass to $(x, u, \indexedVar{y}{k}{N})$ only if applying $u$ to a particle at the state $x$ is optimal to minimize the trajectory cost concerning $\refVar_k, \refVar_{k+1}, \ldots, \refVar_N$. 
%     \item[]\textit{Interpretation of $\lambda_k$.}
Accordingly, we can interpret $\probabilityInput{\ast}{k}$ as the number of particles at $x_k$ that apply the optimal input $\inputMap{}{}{\ast}{k}(x_k,\refVar_k,\ldots,\refVar_N)$.
Intuitively, $\probabilityInput{\ast}{k}$ assigns probability mass to $(x_k, \inputVariable{}{}{}{k})$ if there is a trajectory $\refVar_k, \refVar_{k+1}, \ldots, \refVar_N$ to which $x_k$ has been allocated by $\epsilonPlan{\gamma}{\ast}{k}$, such that $\inputVariable{}{}{\ast}{k}$ is the optimal input to minimize the cost along that trajectory.
% \cref{theorem:lifting} then shows this is indeed optimal. 
%Moreover, \cref{theorem:lifting} effectively provides a recipe to construct $\lambda_k$ for any $\mu_k$. Call any such object $\lambda_{\mu_k}$ to highlight this dependency. Then, one can construct a \emph{feedback} law $\tilde{\lambda}_k: \Pp{}{\statespace{k}} \to \cup_{\mu_k \in \Pp{}{\statespace{k}}} \probabilityInputSpace{k}(\mu_k)$ defined as $\Pp{}{\statespace{k}}\ni \mu_k \mapsto \lambda_{\mu_k}$.
\end{enumerate}
\end{paragraph}

\begin{paragraph}{Existence of optimal solutions}
In turn, our results provide sufficient conditions for the existence of an optimal solution for \cref{problem:finitehorizon:rigorous}: existence of a solution for both the \gls*{acr:dpa} in the ground space and the associated optimal transport problem.
\end{paragraph}

\begin{paragraph}{Existence of optimal input maps}
An optimal solution to \cref{equation:dp:originalspace:extended} always exists when all spaces are finite or when for any $K \subseteq \statespace{k} \times \refSpace{k}\times\ldots\times\refSpace{N}$ compact and $L > 0$ the sets
$
\{\inputVariable{}{}{}{k} \in \inputspace{k} \st \stagecostsmall{k}(x_k, \inputVariable{}{}{}{k}, \refVar_k) + \costtogosmall{k+1}(\dynamics{k}(x_k, \inputVariable{}{}{}{k}), \refVar_{k+1}, \ldots, \refVar_N) \leq L,\forall (x_k, y) \in K\}
$
are compact, the maps $\stagecostsmall{k},\terminalcostsmall$ are lower semicontinuous, and $\dynamics{k}(x_k, \cdot)$ are continuous for all $x_k \in \statespace{k}$; see \cite[Proposition 4.2.2]{Bertsekas2014} and \cite[Theorem 18.19]{Aliprantis2006}.
In general, however, optimal input may not exist. 
For this reason, we state our results using $\varepsilon$-optimality.
\end{paragraph}

\begin{paragraph}{Existence of optimal transport maps}
If the solution of the optimal transport problem is a transport map, then (iii) in~\cref{theorem:lifting} suggests that the optimal input is deterministic. 
Without aims of completeness, this is the case when the following hold: 
\begin{enumerate}[leftmargin=*]
    \item the marginals are empirical with the same number of particles (in virtue of the Birkhoff theorem~\cite[Theorem 6.0.1]{Ambrosio2008}); or
    %\item The marginals are absolutely continuous, the ground spaces $\statespace{k} \subset \reals^{n_k}$ are open and bounded, and the cost-to-go $\costtogosmall{k}$ is continuous \cite{Brenier1991}; or
    \item the cost-to-go $\costtogosmall{k}$ is continuous and semiconcave, and for each $x_k\in\statespace{k}$ the map $(\refVar_k, \ldots, \refVar_N)\mapsto\frac{\partial{\costtogosmall{k}}}{\partial x_k}(x_k, \refVar_k, \ldots, \refVar_N)$ is injective in its domain of definition intersected with splitting sets~\cite[Definition 2.4]{Kim2014}, and $\mu_k$ is absolutely continuous~\cite{Kim2014,Pass2015} (see~\cite[Theorem 1.2]{Figalli2007} for the case with two marginals). 
\end{enumerate}
%Furthermore, one can always resort to $\varepsilon$-optimal solutions of the Monge formulation.
\end{paragraph}

\begin{paragraph}{Connections to previous work}
The approach in the literature for distribution/fleet steering is fundamentally different from ours: It is a priori stipulated that the steering problem is an optimal transport problem from an initial distribution to a target one, without formulating an optimal control problem in probability spaces. This way, the complexity of~\gls*{acr:dpa} probability spaces is bypassed, at the price, however, of potentially suboptimal solutions: There is no reason for this approach to be optimal for a corresponding control problem in the probability space.
%In the literature for distribution/fleet steering, the steering problem is not formulated as an optimal control problem in probability spaces, but rather as an optimal transport problem from an initial distribution to a target one. Such an approach bypasses the complexity of~\gls*{acr:dpa} in probability spaces, at the price, however, of potentially suboptimal solutions: There is no reason for it to yield optimality for a corresponding control problem in the probability space.
%Our methodology, instead, is fundamentally different: We study~\gls*{acr:dpa} in the probability space. 
With~\cref{theorem:lifting,corollary:lifting:ot}, we show that, provided the transportation cost is judiciously chosen, this approach is optimal and yields the same solution as the~\gls*{acr:dpa} in probability spaces.
For instance, the results in \cite[\S A]{HudobadeBadyn2021} correspond to the optimal strategy when $\stagecostsmall{k}(x_k, \inputVariable{}{}{}{k}, \refVar_k) = \norm{\inputVariable{}{}{}{k}}^2$, and terminal constraint on the final distribution (see \cref{section:problem:cost}). The results in \cite{HudobadeBadyn2021} can thus be extended to more general terminal costs (e.g., $\terminalcostsmall(x_N, \refVar_N) = \norm{x_N - \refVar_N}^2$).
Instead, the results in \cite[\S B]{HudobadeBadyn2021} are suboptimal in the sense of the~\gls*{acr:dpa} in the probability space. By~\cref{theorem:lifting}, when the stage costs are reference-dependent (e.g, $\stagecostsmall{k}(x_k, \inputVariable{}{}{}{k}, \refVar_k) = \norm{\inputVariable{}{}{}{k}}^2 + \norm{x_k - \refVar_k}^2$), the cost-to-go results from a multi-marginal optimal transport problem. As such, the strategy proposed in \cite{HudobadeBadyn2021} does not minimize, at every time-step $k$, the weighted sum of the squared Wasserstein distance from the target configuration and the input effort.
Similarly, the problem formulation in \cite{Krishnan2019} can be recovered with integrator dynamics $\dynamics{k}(x_k,\inputVariable{}{}{}{k})=x_k+\inputVariable{}{}{}{k}$, cost $\stagecostsmall{k}(x_k, \inputVariable{}{}{}{k}, \refVar_k) = \norm{\inputVariable{}{}{}{k}}^2$ and terminal constraint on the final distribution (see \cref{section:problem:cost}).
With a state augmentation (the input used along the trajectory, an independent integrator dynamics) and input constraints as suggested in \cref{section:problem:cost}, \cite[Problem 2]{Balci2021} is a special case of our setting, with linear dynamics (see \cref{section:problem:dynamics}), stage cost $\stagecostsmall{k} \equiv 0$, and terminal cost the squared Wasserstein distance; i.e., $\terminalcostsmall(x_N, \refVar_N) = \norm{x_N - \refVar_N}^2$. Simple calculations reveal that the hard-constrained covariance formulation in \cite[Problem 1]{Balci2021} can be reformulated via a hard terminal constraint on the final probability measure (a Gaussian probability measure with appropriate covariance). In both cases, such specializations are possible because the authors restrict themselves to the Gaussian and linear setting. In general, covariance constraints or penalties require further study; see \cref{section:problem:cost}. Similarly, noisy settings do not immediately benefit from our reformulation; see \cref{section:examples:negative:dynamics}. Analogous considerations hold for \cite{Bakolas2016,Bakolas2017,Bakolas2018b,Bakolas2018}.
% \end{itemize}
\end{paragraph}

\begin{paragraph}{Design of transportation costs}
In many disciplines, the design of transportation costs is challenging; see, e.g., \cite{Scarvelis2022,Terpin2022}. 
For instance, in~\cite{Scarvelis2022}, the underlying Riemannian metric characterizing the trajectory of single-cell RNA is retrieved in a data-driven fashion. 
\cref{theorem:lifting,corollary:lifting:ot} suggest an alternative approach: first, ``learn'' the cost-to-go for single particles and then use it as the transportation cost.
\end{paragraph}

\begin{paragraph}{Measurability issues}
In general, the cost-to-go $\costtogosmall{k}$ fails to be Borel (see, e.g.,~\cite[\S 8.2, Example 1]{Bertsekas1996}). Nonetheless, with our assumptions, it is lower semi-analytic~\cite[Corollary 8.2.1]{Bertsekas1996} and, thus, the integral in~\eqref{equation:lifting:optimal-transport} is well-defined~\cite[\S 7.7]{Bertsekas1996}. Similarly, for any $\varepsilon > 0$, the inputs $\inputMap{}{}{\varepsilon/2}{k}$ may fail to be Borel measurable or $\epsilonPlan{\gamma}{\varepsilon/2}{k}$-measurable but are only universally measurable~\cite[Proposition 7.50]{Bertsekas1996}. Non-measurability may raise concerns for the pushforward operation (only defined for Borel or $\epsilonPlan{\gamma}{\varepsilon/2}{k}$-measurable maps) in~\eqref{equation:corollary:lifting:ot:input}.
However, for any Borel measure $\epsilonPlan{\gamma}{\varepsilon/2}{k}$ there exists a Borel map $\tilde{u}: \statespace{k}\times\refSpace{k}\times\ldots\times\refSpace{N}\to\inputspace{k}$ so that $\inputMap{x_k}{\refVar_k, \ldots, \refVar_N}{\varepsilon/2}{k}$ and  $\tilde{u}(x_k, \refVar_k, \ldots, \refVar_N)$ achieve the same cost in~\eqref{equation:dp:originalspace} $\epsilonPlan{\gamma}{\varepsilon/2}{k}$-a.e.\cite[Lemma 7.27]{Bertsekas1996}. Therefore, we can without loss of generality assume that $\inputMap{}{}{\varepsilon/2}{k}$ is Borel. This way, the pushforward operation in~\eqref{equation:corollary:lifting:ot:input} is well-defined. 
\end{paragraph}

\subsection*{Computational aspects}
\label{section:computational-aspects}
%Separation principles, ubiquitous in control theory, provide numerous benefits in terms of modularity and simplicity. 
We now investigate the computational aspects of our results in the context of multi-agent systems.
We argue that our separation principle
\begin{enumerate}[leftmargin=*]
\item renders computationally feasible otherwise infeasible multi-agent settings and
\item balances offline and online computational requirements, providing additional efficiency and adaptivity to fleet changes.
\end{enumerate}
Consider the setting of~\cref{corollary:lifting:ot} in finite spaces: Let $|\statespace{}|$ be the number of states in the ground space, $N$ the horizon length, and $|\inputspace{}|$ the number of available actions at each state.
We compare the \gls{acr:dpa} in probability and the recipe in~\cref{corollary:lifting:ot} in terms of \emph{offline} and \emph{online} computational effort.
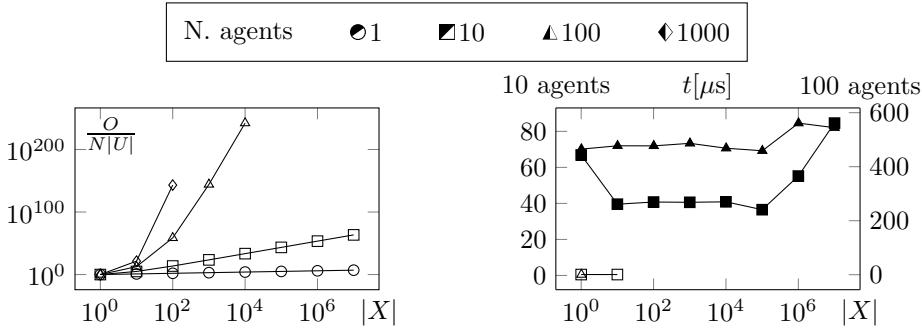
\begin{figure*}[t!]
\centering
\begin{tikzpicture}[every text node part/.style={align=center}]
\matrix [draw, column sep=15] {
% |[multicol=4]| N. agents
% \\
  \node [] {N. agents};
  &
  \node [label={[xshift=-.2cm, yshift=0cm]right:1}] {\statcirc[white]{black}};
  &
  \node [label={[xshift=-.2cm, yshift=0cm]right:10}] {\statrect[white]{black}};
  &
  \node [label={[xshift=-.2cm, yshift=0cm]right:100}] {\stattrian[white]{black}};
  &
  \node [label={[xshift=-.2cm, yshift=0cm]right:1000}] {\statdiam[white]{black}};\\
};
\end{tikzpicture}
\\
\begin{subfigure}[t]{0.48\textwidth}
\centering
\begin{tikzpicture}
\begin{axis}[
    xmode=log,
    ymode=log,
    xlabel={$|\statespace{}|$},
    ylabel={$\frac{O}{N|\inputspace{}|}$},
    ytick={1,1e100,1e200,1e300},
    x label style={at={(axis description cs:1,0)},anchor=north},
    y label style={at={(axis description cs:0,1)},rotate=-90,anchor=north west},
    height=4cm,
    width=.9\textwidth,
]
\addlegendimage{empty legend}
% \addplot[black,mark=*] table [x=n_states, y=1, col sep=comma] {data/ablation_offline.csv};
\addplot[black,mark=o] table [x=n_states, y=1, col sep=comma] {data/ablation_offline.csv};
\addplot[black,mark=square] table [x=n_states, y=10, col sep=comma] {data/ablation_offline.csv};
\addplot[black,mark=triangle] table [x=n_states, y=100, col sep=comma] {data/ablation_offline.csv};
\addplot[black,mark=diamond] table [x=n_states, y=1000, col sep=comma] {data/ablation_offline.csv};
% \addplot[black,mark=triangle*] table [x=n_states, y=10000, col sep=comma] {data/ablation_offline.csv};
% \legend{N. agents, $1$, $10$, $100$, $1000$} % $10000$
\end{axis}
\end{tikzpicture}
\caption{\emph{Offline} requirements. Computational effort of~\gls{acr:dpa} in probability spaces for various problem sizes and number of agents. 
The recipe in \cref{corollary:lifting:ot} always requires the computational effort of the single-agent case.
}
\label{fig:computation:offline}
\end{subfigure}%
\hfill
\begin{subfigure}[t]{0.48\textwidth}
\centering
\begin{tikzpicture}
\begin{axis}[
    xmode=log,
    xlabel={$|\statespace{}|$},
    ylabel style={align=center}, ylabel={$10$ agents},
    scaled ticks=false,
    axis y line*=left,
    x label style={at={(axis description cs:1,0)},anchor=north},
    y label style={at={(axis description cs:0,1)},rotate=-90,anchor=south},
    height=4cm,
    width=.9\textwidth,
    title={$t[\mathrm{\mu s}]$},
    title style={yshift=-1.5ex}
]
\addplot[black,mark=square] table [x=n_states, y expr=\thisrow{10_coupled}*1000000, col sep=comma] {data/ablation_online.csv};
\addplot[black,mark=square*] table [x=n_states, y expr=\thisrow{10_decoupled}*1000000, col sep=comma] {data/ablation_online.csv};
% \addplot[black,mark=diamond*] table [x=n_states, y=1000_decoupled, col sep=comma] {data/ablation_online.csv};
% \legend{N. agents, $1$ coupled, $10$ coupled, $100$ coupled, $1000$ coupled, $1$ decoupled, $10$ decoupled, $100$ decoupled, $1000$ decoupled} % $10000$
\end{axis}
\begin{axis}[
    xmode=log,
    xlabel={$|\statespace{}|$},
    ylabel style={align=center}, ylabel={$100$ agents},
    scaled ticks=false,
    axis y line*=right,
    axis x line=none,
    y label style={at={(axis description cs:1,1)},rotate=-90,anchor=south},
    height=4cm,
    width=.9\textwidth,
]
\addplot[black,mark=triangle] table [x=n_states, y expr=\thisrow{100_coupled}*1000000, col sep=comma] {data/ablation_online.csv};
\addplot[black,mark=triangle*] table [x=n_states, y expr=\thisrow{100_decoupled}*1000000, col sep=comma] {data/ablation_online.csv};
\end{axis}
\end{tikzpicture}
\caption{\emph{Online} requirements. Computational time for~\gls{acr:dpa} in probability spaces (empty marker) and the recipe in \cref{corollary:lifting:ot} (filled marker) in settings with $10$ agents (left $y$-axis) and $100$ agents (right $y$-axis). 
%Left $y$-axis for $10$ agents, right for $100$.
%The numbers are averaged on thousand different samples.
}
\label{fig:computation:online}
\end{subfigure}%
% \ifbool{arxiv}{}{\vspace{-.6cm}}
\caption{
Sensitivity of \emph{offline} and \emph{online} computational effort to the number of agents $M$ and states $|\statespace{}|$ for the \gls*{acr:dpa} in probability spaces (empty marker) and \cref{corollary:lifting:ot} (filled marker), with $|\inputspace{}|, N, O$ being the number of actions, time-steps, and operations, respectively. 
We omit markers when numbers exceed the IEEE 754 floating point representation.
The \gls{acr:dpa} in probability spaces is infeasible already for a small number of agents. The recipe in \cref{corollary:lifting:ot}, instead, remains feasible for large fleet sizes. 
}
\label{fig:computation}
% \ifbool{arxiv}{}{\vspace{-0.7cm}}
\end{figure*}

\begin{paragraph}{Offline computation}
The \gls*{acr:dpa} in the ground space %(for all initial and terminal states) 
has a time and memory complexity of 
$
\mathcal{O}\left(|\statespace{}|N|\inputspace{}|\right)
$, since the computation of the feedback law requires testing all feasible inputs $u\in U$ (and choosing the optimal one) for each state $x \in \statespace{}$ and at every of the $N$ time-steps. If we normalize by the constants in the $\mathcal{O}$-notation and $N|\inputspace{}|$, the number of operations is proportional to the number of states. At the fleet level, this number amounts to the number of configurations of the fleet. If we restrict ourselves to empirical probability measures with $M$ particles, which can be written as $\mu_k = \frac{1}{M}\sum_{i = 1}^M \delta_{x^{(i)}_k}$ for $x^{(i)}_k \in \statespace{}, \inIndexSet{i}{1}{M}$, the number of such configurations is 
$
\binom{{M + |\statespace{}| - 1}}{M} = \frac{(M + |\statespace{}| - 1)!}{M!(|\statespace{}| - 1)!}
$
(cf. \cite[Theorem 2.6.3 and Example 2.7.9, Case 4]{Combinatorics2019}).
%In particular, for $M = 1$ this number reduces to $|\statespace{}|$.
In particular, note the factorial growth in the number of agents. 
%The resulting ablation study in \cref{fig:computation:offline} reveals the driving factor to be the number of agents. 
For instance, the control problems in the benchmark dataset~\cite{RLBenchmark2023} have  $10 \leq |\statespace{}| \leq 1\mathrm{e}7$, $3 \leq |\inputspace{}| \leq 18$, and $10 \leq N \leq 200$. Hardware capable of trillions of operations per second solves the hardest instance in a few milliseconds. As shown in \cref{fig:computation:offline}, the \gls*{acr:dpa} in probability spaces would be unfeasible already for $M = 10$ and, for the easiest instance, with $M = 1000$.
\end{paragraph}

\begin{paragraph}{Online computation}
As long as the dynamic programming table in probability spaces has a reasonable size (which, as discussed above, is not the case already for a few states and agents), the cost amounts to the lookup\footnote{While in principle the retrieval from a table has a constant time, with very high probability for large query domains (as in the multi-agent case) it is logarithmic in the table size~\cite[\S 11]{Cormen2009}.} of the optimal state-input distribution. Instead, the recipe in~\cref{corollary:lifting:ot}  requires solving an optimal transport problem (i.e., an allocation problem), which with efficient solvers is real-time feasible even in large instances (e.g., a few milliseconds for thousands of agents with~\cite{Flamary2021})\footnote{In this case, the retrieval, needed to evaluate assignment costs, has to be done on a much smaller table, as discussed in \emph{Offline computation}.}. %In the context of multi-agent systems, in virtue of Birkhoff theorem~\cite[Theorem 6.0.1]{Ambrosio2008}, it is convenient to consider the current and target fleet configurations to be uniform (with a simple state-augmentation), so that a transport map is guaranteed to exist.
% $
% \min_{\plan{\gamma}_k^{(ij)} \geq 0} 
% \sum_{i = 1}^M
% \sum_{j = 1}^M
% C_k^{(ij)}
% \plan{\gamma}_k^{(ij)}
% $,
% $
% \sum_{i = 1}^M\plan{\gamma}_k^{(ij)} = \frac{1}{M}
% $
% and
% $
% \sum_{j = 1}^M\plan{\gamma}_k^{(ij)} = \frac{1}{M}
% $,
% so that a transport map is guaranteed to exist. The current and target probability measures are considered uniform possibly duplicating entries in the cost matrix $C_k$, which has the $\nth{i,j}$ entry $C_k^{(ij)} = \costtogosmall{k}(x_k^{(i)}, \refVar_N^{(j)})$, with $\refPVar = \frac{1}{M}\sum_{j = 1}^M\delta_{\refVar_N^{(j)}}$ being the target probability measure.
% Already for a few agents and states, the memory requirements consumer hardware cannot load in memory the data relative to the current time-steps (cf. \cref{fig:computation:online}), and resorting to other (slower) techniques is not suited for real-time robotics applications. On the other hand, even for very large fleet sizes, efficient optimal transport solvers such as \cite{Flamary2021} find a solution in less than a millisecond.
While the computational time depends on the specific instance, we randomly generate a cost-to-go for variable problem sizes, and we juxtapose the timings (averaged over thousand samples) in~\cref{fig:computation:online}. As the~\gls*{acr:dpa} in probability spaces only amounts to a look-up, its deployment entails almost no computational effort (empty marker in the plot), but it is only feasible in instances where the dynamic programming table can be computed (i.e., up to $|X|=100$ for 10 agents and $|X|=10$ for 100 agents). The recipe in~\cref{corollary:lifting:ot}, instead, yields real-time-feasible computational times across all instances (filled markers in the plot), with moderate dependence on the problem size.
\end{paragraph}

Altogether, \cref{corollary:lifting:ot} drastically reduces the \emph{offline} computation at the price of slightly higher \emph{online} computation. Namely, it alleviates the factorial growth in the number of agents of the \gls*{acr:dpa} in probability spaces, which makes already small instances intractable, while only requiring the real-time solution of an allocation problem.
Moreover, the recipe in~\cref{corollary:lifting:ot} does not require additional (offline) computation when the fleet is reconfigured (e.g., an agent is added or removed).

\begin{comment}
%Additional benefits of the recipe provided in \cref{corollary:lifting:ot} include: 
%\begin{enumerate}[leftmargin=*]
% \item
% One may as well compute the feedback law for the fleet completely offline, thus solving all the possible allocation problems; this however defeat the purpose, as the number of such problems explode with increasing size of the fleet.
%\item 
%No additional offline computation is needed for reconfiguration of the fleet (e.g., an additional agent is added or an agent is removed).
%\item
%As only the solution of the \gls*{acr:dpa} for a single agent is needed, we believe that our results can be leveraged in collaborative reinforcement learning~\cite{Abbeel2018,MARL2022}.
%\end{enumerate}

All together, the benefits of \cref{corollary:lifting:ot} increase with $M$ and $|\statespace{}|$. When these numbers are small, one may instead resort to the \gls*{acr:dpa} in probability spaces. For instance, in \cref{example:robots,example:robots:input,example:robots:dynamics,example:robots:cost}, $|\statespace{}| = 3$ and $|\inputspace{}| = 2$, and the offline effort is proportional to $6N$ operations with \cref{corollary:lifting:ot}, and $(M + 2)(M + 1)N$ otherwise: One pays (more) online what saves offline.
\end{comment}
\section{Examples and Pitfalls}
\label{section:examples}
In \cref{section:examples:positive}, we present two examples where two marginals are enough, in line with the existing literature \cite{HudobadeBadyn2021,Krishnan2019}. Then, in \cref{section:examples:negative:costs}, we showcase that, in general, the multi-marginal formulation is necessary. Finally,~\cref{section:examples:negative:dynamics} shows that our results do not readily extend to noisy dynamics. 

\subsection{Examples when two marginals are all you need}\label{section:examples:positive}
We start with an example to which \cref{corollary:lifting:ot} applies.

\begin{example}[Integrator particle dynamics, input effort]\label{example:dynamics-integrator:effort-input}
Suppose we aim at steering a probability measure $\mu_0 \in \spaceProbabilityBorelMeasures{\reals^n}$ to a target $\refPVar_N \in \spaceProbabilityBorelMeasures{\reals^n}$ in $N$ steps; i.e., $\statespace{k}=\refSpace{k}=\reals^n$. The input space is $\inputspace{k}=\reals^n$, and the dynamics are $\dynamics{k}(x_k,\inputVariable{}{}{}{k}) = x_k + \inputVariable{}{}{}{k}$. The costs are $\stagecostsmall{k}(x_k,\inputVariable{}{}{}{k})=\norm{\inputVariable{}{}{}{k}}^2$, and $\terminalcostsmall(x_N,\refVar_N) = 0$ if $x_N = \refVar_N$ and $+\infty$ otherwise, so that the stage cost in the probability space is $\kantorovich{\stagecostsmall{k}}{\probabilityInput{}{k}}{\refPVar_k}{} =\expectedValue{\probabilityInput{}{k}}{\norm{\inputVariable{}{}{}{k}}^2}$, and the terminal cost is $\kantorovich{\terminalcostsmall}{\mu_N}{\refPVar_N}{}=0$ if $\mu_N=\refPVar_N$ and $+\infty$ otherwise. 
The optimal control problem in the ground space admits the solution $\inputMap{x_k}{\refVar_N}{}{k} = \frac{\refVar_N - x_k}{N - k}$, with the associated cost-to-go $\costtogosmall{k}(x_k,\refVar_N) = \frac{N-k}{N^2}\norm{\refVar_N - x_k}^2$.
% \begin{displaymath}
% u_{xy}^k = \begin{cases}
% 0\quad&\text{if }x = y,\\
% \frac{y - x}{N - k}&\text{otherwise}
% \end{cases}
% \quad\text{and}\quad
% \costtogosmall{k}(x,y) = \frac{N-k}{N^2}\norm{y - x}^2.
% \end{displaymath}
By~\cref{corollary:lifting:ot}, the cost-to-go in the space of probability measures $\costtogo{k}$ is
\begin{equation*}
\costtogo{k}(\mu_k, \refPVar_N) 
= 
\min_{\plan{\gamma} \in \setPlans{\mu_k}{\refPVar_N}}
\int_{\reals^{n}\times\reals^n}
\frac{N - k}{N^2}
\norm{\refVar_N - x_k}^2
\d \plan{\gamma}(x_k,\refVar_N)
=
\frac{N - k}{N^2}\wassersteinDistance{2}{\mu_k}{\refPVar_N}^2,
\end{equation*}
% Take $k = 0$ to obtain the optimal cost
% $
% \costtogo{}(\mu_0, \refPVar_N) 
% = 
% \kantorovich{\costtogosmall{0}}{\mu_0}{\refPVar_N}{} 
% =
% \frac{1}{N}\wassersteinDistance{2}{\mu_0}{\refPVar_N}^2
% $.
and the optimal input reads as $\probabilityInput{}{k}=\pushforward{(\projectionFromTo{\statespace{k}\times\refSpace{N}}{\statespace{k}}, \inputMap{}{}{}{k})}{\plan{\gamma}_k}$, where $\plan{\gamma}_k$ is the optimal transport plan for $\costtogo{k}(\mu_k, \refPVar_N)$. In the particular case where an optimal transport map $\transportMap{}{k}:\statespace{k}\to\refSpace{N}$ exists, the optimal input simplifies to
$
\probabilityInput{}{k} = \pushforward{(\identityOn{\statespace{k}}, \inputMap{}{}{}{k}(\cdot,\transportMap{}{k}(\cdot)))}{\mu_k}
$.
That is, all particles having state $x_k$ apply the input $\inputMap{}{}{}{k}(x_k,\transportMap{}{k}(x_k))=\frac{\transportMap{}{k}(x_k) - x_k}{N - k}$.
% For instance, if $\mu_k$ and $\refPVar_N$ are Gaussian with mean $m_k$ and $m$ and covariance $\Sigma_k$ and $\Sigma$, respectively, the optimal transport problem admits an optimal transport map, which can be computed in closed-form \cite{Talagrand1996}. This yields the optimal input
% \begin{displaymath}
% u_k(x) = \frac{T_k(x) - x}{N - k}
% ,\quad\text{with}\quad
% \begin{aligned}
% T_k(x) &= Q_k(x - m_k) + m\\
% Q_k &= \Sigma_k^{-1/2}(\Sigma_k^{1/2}\Sigma\Sigma_k^{1/2})^{1/2}\Sigma_k^{-1/2}
% \end{aligned}
% \end{displaymath}
% and the cost-to-go
% \begin{displaymath}
% \costtogo{k}(\mu_k, \refPVar_N) = \frac{N - k}{N^2}\left(\norm{m - m_k}^2 + \trace\left(\Sigma+\Sigma_k - 2(\Sigma^{1/2}\Sigma_k\Sigma^{1/2})^{1/2}\right)\right).
% \end{displaymath}
% In particular, the optimal input is deterministic at every stage; i.e., $\lambda_k = \pushforward{(\identityOn{\statespace{k}}, u_k)}{\mu_k}$.
\end{example}

Sometimes, the optimal input is probabilistic.

\begin{example}[Sometimes it is necessary to split the mass]
\label{example:split-necessary}
Let $N = 1$, and consider $\statespace{k} = \inputspace{k} = \refSpace{k} = \reals$, $\dynamics{0}(x_0, \inputVariable{}{}{}{0}) = \inputVariable{}{}{}{0}$, $\stagecostsmall{0}(x_0, \inputVariable{}{}{}{0}) = 0$, $\terminalcostsmall(x_1, \refVar_1) = \norm{x_1 - \refVar_1}^2$. Let $\mu_0 = \delta_0$ and $\refPVar_1=\frac{1}{2}\delta_{-1}+\frac{1}{2}\delta_{+1}$. For every pair $(x_0, \refVar_1)$, the solution in the ground space is $\inputMap{x_0}{\refVar_1}{}{0} = \refVar_1$, which yields the cost-to-go $\costtogosmall{0}(x_0, \refVar_1) = 0$. That is, any allocation $\plan{\gamma} \in \setPlans{\delta_0}{\refPVar_N}$ is optimal; in particular, the only feasible plan $\plan{\gamma^\ast} \in \setPlans{\delta_0}{\refPVar_N}$ displaces 50\% of mass to $\refVar_1 = -1$ and the other 50\% of the mass to $\refVar_1 = +1$; see \cref{fig:example:split-necessary}. Then the optimal input reads $\probabilityInput{}{0} = \pushforward{(\projectionFromTo{\statespace{0}\times\refSpace{1}}{\statespace{0}}, \inputMap{}{}{}{0})}{\plan{\gamma^\ast}}$: $50\%$ of the particles apply the input $\inputVariable{}{}{}{0} = +1$ and the others $\inputVariable{}{}{}{0} = -1$. 
\end{example}

\subsection{Why all these marginals?}\label{section:examples:negative:costs}

Hereby, we explore the differences between \cref{theorem:lifting} and \cref{corollary:lifting:ot}. Specifically, we clarify why a \emph{multi-marginal} optimal transport formulation arises, even when the target probability measure remains constant throughout the horizon (i.e.,  $\refPVar_0=\ldots=\refPVar_N\eqqcolon\refPVar$). %In the following, we always consider empirical probability measures $\mu_0$ and $\refPVar$ with the same number of particles. Therefore, in virtue of Birkhoff theorem~\cite[Theorem 6.0.1]{Ambrosio2008}, all optimal transport problems coincide with their Monge formulation (i.e., we can restrict ourselves to transport maps instead of transport plans). 

\begin{counterexample}[Two marginals are not enough]
\label{example:multi-marginal}
Consider, as in \cref{example:robots:dynamics}, $\statespace{k} = \refSpace{k} = \{\pm1, 0\}, \inputspace{k} = \{-1, 0\}$, dynamics $\dynamics{k}(x_k, \inputVariable{}{}{}{k}) = x_k\inputVariable{}{}{}{k}$, with horizon $N = 2$, and costs $\stagecostsmall{k}(x_k,\inputVariable{}{}{}{k},\refVar_k) = \norm{x_k-\refVar_k}^2$ and $\terminalcostsmall(x_N,\refVar_N) = \norm{x_N - \refVar_N}^2$, so that the stage and terminal cost in the probability space are the squared (type 2) Wasserstein distance from the fixed reference measure $\refPVar = \frac{1}{2}(\delta_{-1} + \delta_{+1})$.
First, we utilize~\cref{corollary:lifting:ot}, keeping the reference constant throughout the horizon. 
The cost-to-go $\tilde{\costtogosmall{0}}(\pm 1, \pm 1) = 2$ (here and below, this notation means $\tilde{\costtogosmall{0}}( +1,  +1)=\tilde{\costtogosmall{0}}(- 1, -1) = 2$) and $\tilde{\costtogosmall{0}}(\pm 1, \mp 1) = 6$, both obtained applying at the first stage $\inputVariable{}{}{}{0} = 0$ (and subsequently any input). The cost-to-go for the fleet is $\tilde{\costtogo{0}}(\mu_0, \refPVar) = \kantorovich{\tilde{\costtogosmall{0}}}{\mu_0}{\refPVar}{} = 2$, with the particle having state $x_0 = \pm 1$ allocated to $\refVar_2 = \pm 1$.
However, from a fleet perspective, the input $\inputVariable{}{}{}{k} = -1$ leads to $\mu_0 = \mu_1 = \mu_2 = \refPVar$. By changing allocations throughout the horizon, we obtain a total cost $\costtogo{0}(\mu_0, \refPVar) = 0$.
This behavior emerges naturally with~\cref{theorem:lifting}. The cost-to-go in the ground space satisfies $\costtogosmall{0}(x_0 = \pm 1, \refVar_0 = \pm 1, \refVar_1 = \mp 1, \refVar_2 = \pm 1)=0$, with the input $\inputVariable{}{}{}{k}=-1$ at all times.
Then the transport plan $\plan{\gamma} = \pushforward{(\identityOn{\reals}, \identityOn{\reals}, -\identityOn{\reals}, \identityOn{\reals})}{\mu_0} \in \setPlans{\mu_0}{\refPVar,\refPVar,\refPVar}$ yields
\begin{equation*}
    \costtogo{0}(\mu_0, \refPVar, \refPVar, \refPVar) 
    = \kantorovich{\costtogosmall{0}}{\mu_0}{\refPVar, \refPVar, \refPVar}{}
    %\\&
    \leq
    2\frac{1}{2}\costtogosmall{0}(x_0 = \pm 1, \refVar_0 = \pm 1, \refVar_1 = \mp 1, \refVar_2 = \pm 1) 
    %\\&= 2\left(\norm{x_0 - \refVar_0}^2 + \norm{-x_0 - \refVar_1}^2 + \norm{x_0 - \refVar_2}^2\right)
    = 0,
\end{equation*}
necessarily optimal; see \cref{fig:example:multi-marginal}. In particular, $\costtogo{}(\mu_0, \refPVar, \refPVar, \refPVar) < \tilde{\costtogo{}}(\mu_0, \refPVar)=2$. That is, \cref{corollary:lifting:ot} does not apply and the optimal solution results from~\cref{theorem:lifting}.
\end{counterexample}

\begin{figure*}[t!]
\centering
\begin{subfigure}[t]{0.32\textwidth}
\centering
\begin{tikzpicture}[scale=\ifbool{arxiv}{1.0}{.7}]
%\draw[help lines, color=gray!30, dashed] (-1.9,-.4) grid (1.9,1.9);
\node (x0) at (0,1.5) {};
\node (x1m1) at (-1.5,.75) {};
\node (x1p1) at (1.5,.75) {};
\node[below right] at (0,0) {$x_0$};
\node[below] at (1.5,0) {$+1$};
\node[below] at (-1.5,0) {$-1$};
\draw[->,thick] (-2.5,0)--(2.5,0) node[below]{$x$};
\draw[->,thick] (0,-.5)--(0,2.5) node[right]{$\mu(\{x\})$};
\draw[->, ultra thick] (0,0)--(x0);
\draw[->, ultra thick, dashed] (1.5,0)--(1.5,1.5);
\draw[->, ultra thick, dotted] (-1.5,0)--(-1.5,1.5);
\draw[->, ultra thick] (1.5,0)--(1.5,.75);
\draw[->, ultra thick] (-1.5,0)--(-1.5,.75);
\path[->, thick] (x0) edge[bend right] node[above=.35cm,left=-.35cm]{$\dynamics{0}(x_0, -1)$} (x1m1);
\path[->, thick] (x0) edge[bend left] node[above=.35cm,right=-.35cm]{$\dynamics{0}(x_0, +1)$} (x1p1);
\end{tikzpicture}
\caption{A deterministic input yields either the dotted or the dashed configuration, but it cannot split the probability mass and yield the solid configuration; see \cref{example:split-necessary}.}
\label{fig:example:split-necessary}
\end{subfigure}%
\hfill
\begin{subfigure}[t]{0.32\textwidth}
\centering
\begin{tikzpicture}[scale=\ifbool{arxiv}{1.1}{.8}]
\draw[help lines, color=gray!30, dashed] (-.5,-1.4) grid (2.5,1.4);
\draw[->,thick] (-.5,0)--(2.5,0) node[below]{$k$};
\draw[->,thick] (0,-1.5)--(0,1.5) node[right]{$x$};
\node[circle, fill, minimum size=5pt, inner sep=0pt, outer sep=0pt] (x01) at (0,1) {};
\node[circle, fill, minimum size=5pt, inner sep=0pt, outer sep=0pt] (x02) at (0,-1) {};
\node[circle, draw, minimum size=5pt, inner sep=0pt, outer sep=0pt] (x11) at (1,-1) {};
\node[circle, draw, minimum size=5pt, inner sep=0pt, outer sep=0pt] (x12) at (1,1) {};
\node[circle, draw, minimum size=5pt, inner sep=0pt, outer sep=0pt] (x21) at (2,1) {};
\node[circle, draw, minimum size=5pt, inner sep=0pt, outer sep=0pt] (x22) at (2,-1) {};
\draw[->, thick, dashed] (x01)--(x11);
\draw[->, thick, dashed] (x11)--(x21);
\draw[->, thick, dashed] (x02)--(x12);
\draw[->, thick, dashed] (x12)--(x22);
% Sub-optimal
\node[circle, fill, minimum size=5pt, inner sep=0pt, outer sep=0pt] (subx1) at (1,0) {};
\node[circle, fill, minimum size=5pt, inner sep=0pt, outer sep=0pt] (subx2) at (2,0) {};
\draw[->, thick] (x01)--(subx1);
\draw[->, thick] (x02)--(subx1);
\draw[->, thick] (subx1)--(subx2);
\end{tikzpicture}
\caption{Two switching particles (dashed) yield the optimal configuration from a fleet perspective at all times, oppositely to a fixed allocation (solid); see \cref{example:multi-marginal}.}
\label{fig:example:multi-marginal}
\end{subfigure}
\hfill
\begin{subfigure}[t]{0.32\textwidth}
\centering
\begin{tikzpicture}[scale=\ifbool{arxiv}{1.5}{1.25}]
\node (heart) at (0,1.25) {$-1$};
\node (diamond) at (0,0) {$-2$};
\node (spade) at (2,1.25) {$2$};
\node (club) at (2,0) {$1$};
\node (offset) at (0,-.25) {};
\draw[->, thick, dashed] (heart)--(spade);
\draw[->, thick, dashed] (heart) edge[bend right](club);
\draw[->, thick, dashed] (diamond) edge[bend left] (spade);
\draw[->, thick, dashed] (diamond) -- (club);
\draw[->, thick] (club) -- (heart);
\draw[->, thick] (spade) -- (diamond);
\draw[->, thick, dotted] (club) edge[bend left] (diamond);
\draw[->, thick, dotted] (spade) edge[bend right] (heart);

\draw[->, thick, dotted] (2.2, .8) -- node[above] {$0$} (2.75, .8);
\draw[->, thick, dashed] (2.2, .7) -- (2.75, .7);
\draw[->, thick] (2.2, .25) -- node[above] {$2$} (2.75, .25);
\end{tikzpicture}
\caption{The noisy drift (dashed) may favor a re-allocation with lower effort (dotted), oppositely to the fixed allocation that yields a larger cost (solid) every second time; see \cref{example:lifting:localnoise}.}
\label{fig:example:noise}
\end{subfigure}%
\ifbool{arxiv}{}{\vspace{-.6cm}}
\caption{Depiction of \cref{example:split-necessary}, 
\cref{example:multi-marginal}, and \cref{example:lifting:localnoise}.}
\ifbool{arxiv}{}{\vspace{-0.7cm}}
\end{figure*}

\subsection{The effect of local noise}\label{section:examples:negative:dynamics}
When the particle dynamics are noisy, it is common to minimize the expected particle cost via the \emph{stochastic \gls*{acr:dpa}}:
\begin{equation}\label{equation:dpa:stochastic}
\begin{aligned}
    \costtogosmall{N}(x_N, \refVar_N) &= \terminalcostsmall(x_N, \refVar_N);\\
    \costtogosmall{k}(x_k, \refVar_N) &= \inf_{\inputVariable{}{}{}{k} \in \inputspace{k}}\expectedValue{w_k \sim \xi_k}{\stagecostsmall{k}(x_k, \inputVariable{}{}{}{k}, w_k) + \costtogosmall{k + 1}(\dynamics{k}(x_k, \inputVariable{}{}{}{k}, w_k), \refVar_N)},
\end{aligned}
\end{equation} 
where $\xi_k\in\Pp{}{W_k}$ is the probability measure of the noise, and $W_k$ is the space of possible realizations. 
Since $\costtogosmall{k}$ is of the form required for \cref{corollary:lifting:ot}, it is tempting to extend our results.
Unfortunately, the noisy drift may favor a different allocation of the particles, and the expectation annihilates such an effect.

\begin{counterexample}[\cref{corollary:lifting:ot} does not readily extend]\label{example:lifting:localnoise}
Consider a horizon $N = 2$ and the setting depicted in \cref{fig:example:noise}. Let $\statespace{k}=\refSpace{k} = \inputspace{k} = \{\pm1, \pm2\}$, and consider uniformly distributed noise over $\noisespace{k} = \{1, 2\}$. The particle dynamics is $\dynamics{k}(x_k, \inputVariable{}{}{}{k}, w_k) = w_k$ if $x_k < 0$ and $\dynamics{k}(x_k, \inputVariable{}{}{}{k}, w_k) = \inputVariable{}{}{}{k}$ otherwise.
The stage cost is $\kantorovich{\stagecostsmall{k}}{}{}{}$, where $\stagecostsmall{k}(x_k, -x_k, w_k) = 2$ if $x_k > 0$, and $0$ otherwise. The terminal cost enforces the configuration $\refPVar_N = \frac{1}{2}\delta_{-2} + \frac{1}{2}\delta_{-1}$, namely $\kantorovich{\terminalcostsmall}{}{}{}$ with $\terminalcostsmall(x_N, \refVar_N) = 0$ if $x_N = \refVar_N$ and $+\infty$ otherwise. The recursion~\cref{equation:dpa:stochastic} yields $\costtogosmall{0}(x_0, \refVar_N) = 1$ for $x_0, \refVar_N < 0$ (any input at the first stage and $u_1 = \refVar_N$ at the second stage). Therefore, with the initial configuration and target configuration $\mu_0 = \refPVar_N = \frac{1}{2}\left(\delta_{-2} + \delta_{-1}\right)$, \cref{corollary:lifting:ot} yields $\tilde{\costtogo{0}}(\mu_0, \refPVar_N) = \kantorovich{\costtogosmall{0}}{\mu_0}{\refPVar_N}{} = 1$.
Instead, the \gls*{acr:dpa} in the probability space gives $\mu_1 = \frac{1}{2}\left(\delta_{+1} + \delta_{+2}\right)$ with zero cost (regardless of the input). Then, the evolution is deterministic and the cost-to-go amounts to $\costtogosmall{1}(x_1, \refVar_N) = 0$ unless $\refVar_N = -x_1$. Thus, \cref{corollary:lifting:ot} applies and yields $\costtogo{1}(\mu_1, \refPVar_N)=\kantorovich{\costtogosmall{1}}{\mu_1}{\refPVar_N}{}=\frac{1}{2}\costtogosmall{1}(+1, -2) + \frac{1}{2}\costtogosmall{1}(+2, -1)=0$.
Overall, $\costtogo{0}(\mu_0, \refPVar_N) \leq 0 + \costtogo{1}(\mu_1, \refPVar_N) =0<1=\tilde{\costtogo{0}}(\mu_0, \refPVar_N)$. Thus, the naive application of~\cref{corollary:lifting:ot} is suboptimal.
\end{counterexample}
\section{Proof of~\texorpdfstring{\cref{theorem:lifting,corollary:lifting:ot}}{Theorem 4.1 and Corollary 4.2}}\label{section:proofs}
%\subsection{Technical results}
For the proof~\cref{theorem:lifting,corollary:lifting:ot}, we need a few preliminary lemmata. For ease of notation, let $\statespace{} \coloneqq \statespace{1}\times\ldots\times\statespace{k}$, $\otherspace{} \coloneqq \otherspace{1}\times\ldots\times\otherspace{h}$, and $\anotherspace{} \coloneqq \anotherspace{1}\times\ldots\times\anotherspace{k}$.
To start, we introduce a variation of~\eqref{equation:cost:multimarginaloptimaltransport}, in which only the first $k$ marginals $\mu_i \in \Pp{}{\statespace{i}}$ are fixed. Namely,
\begin{align*}
\freemarginals{c}{\mu_1}{}{\mu_k}
&\coloneqq
\inf_{
\pushforward{\left(\projectionFromTo{\statespace{}\times\otherspace{}}{\statespace{}}\right)}{\plan{\gamma}}
\in \setPlans{\mu_1, \ldots}{\mu_k}
}
\int_{
\statespace{}\times\otherspace{}
}
c
\d\plan{\gamma},
% \\
% \setPlansPartial{\indexedVar{\mu}{1}{k}}{\otherspace{}}
% &\coloneqq
% \left\{
% \plan{\gamma} \in \Pp{}{\statespace{}\times\otherspace{}} 
% \st
% \pushforward{\left(\projectionFromTo{\statespace{}\times\otherspace{}}{\statespace{}}\right)}{\plan{\gamma}}
% =
% \indexedVar{\mu}{1}{k}
% \right\},
\end{align*}
where $c: \statespace{}\times\otherspace{} \to \nonnegativeRealsBar$ is the transportation cost. 
When $c \in \statespace{} \to \nonnegativeRealsBar$ (i.e., there are no free marginals), we conveniently write $\freemarginals{c}{\mu_1}{}{\mu_k} 
=
\kantorovich{c}{\mu_1}{}{\mu_k}$.
Furthermore, given a collection of maps $\{l_k: \statespace{k} \to \anotherspace{k}\}_{k=i}^j$, we denote by 
% \begin{align*}
$
l \coloneqq l_i \times \ldots \times l_{j}
% \\
$
the map $\statespace{i}\times\ldots\times\statespace{j} %&
\to \anotherspace{i}\times\ldots\times\anotherspace{j}$
defined pointwise as
$
(x_i, \ldots, x_j)
% &
\mapsto 
(l_i(x_i), \ldots, l_j(x_j))
$.
% \end{align*}
Given the probability measures $\{\mu_i \in \Pp{}{\statespace{i}}\}_{i = 1}^k$, $\mu \coloneqq (\mu_1,\ldots,\mu_k)$, we conveniently write $\pushforward{l}{\mu} \coloneqq (\pushforward{l_1}{\mu_1}, \ldots, \pushforward{l_k}{\mu_k})$. A measure-valued map $\statespace{} \ni x \mapsto \mu\in\Pp{}{\statespace{}}$ is Borel if and only if, for any Borel set $B \subseteq \statespace{}$, the map $x \mapsto \mu(B)$ is Borel.

In our setting, the cost-to-go will be an optimal transport discrepancy, and the dynamics are a pushforward. To relate the cost-to-go at the $\nth{k}$ stage to the one at the previous time step, we rigorously formalize their interplay. A similar but less general result (i.e., only with two fixed marginals) was derived in the context of uncertainty propagation via optimal transport~\cite{Aolaritei2022}.

\begin{lemma}[Pushforward and optimal transport]\label{lemma:pushforward:stability}
Given a transportation cost $c: \anotherspace{}\times\otherspace{} \to \nonnegativeRealsBar$, $k\in\naturals_{\geq 1}, h \in \naturals$, maps $\{l_{i}: \statespace{i} \to \anotherspace{i}\}_{i = 1}^k$, $l \coloneqq l_1 \times\ldots\times l_k$, and probability measures $\{\mu_i \in \Pp{}{\statespace{i}}\}_{i = 1}^k$, $\mu \coloneqq (\mu_1,\ldots,\mu_k)$, it holds that
\begin{equation*}\label{equation:pushforward:stability:1}
\begin{aligned}
\freemarginals{\comp{(l\times\identityOn{\otherspace{}})}{c}}{\mu}{}{}
&=
\inf_{
\pushforward{\left(\projectionFromTo{\statespace{}\times\otherspace{}}{\statespace{}}\right)}{\plan{\mu}}
\in \setPlans{\mu}{}
}
\int_{\statespace{}\times\otherspace{}} 
\comp{(l\times\identityOn{\otherspace{}})}{c}
\d\plan{\mu}
\\&= 
\inf_{
\pushforward{\left(\projectionFromTo{\anotherspace{}\times\otherspace{}}{\anotherspace{}}\right)}{\plan{\mu'}}
\in \setPlans{\pushforward{l}{\mu}}{}
}
\int_{\anotherspace{}\times\otherspace{}} 
c\d\plan{\mu'}
=
\freemarginals{c}{\pushforward{l}{\mu}}{}{}.
\end{aligned}
\end{equation*}
\begin{comment}
In particular, when $c$ is a map $\anotherspace{}\to\nonnegativeRealsBar$, it holds:
\begin{equation}\label{equation:pushforward:stability:2}
\kantorovich{\comp{l}{c}}{\mu}{}{}
=
\inf_{
\plan{\mu} \in \setPlans{\mu}{}
}
\int_{\statespace{}} 
\comp{l}{c}
\d\plan{\mu}
= 
\inf_{\plan{\mu'} 
\in \setPlans{\pushforward{l}{\mu}}{}
}
\int_{\statespace{}} 
c\d\plan{\mu'}
=
\kantorovich{c}{\pushforward{l}{\mu}}{}{}.
\end{equation}
\end{comment}
\end{lemma}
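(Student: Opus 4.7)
The identity $\freemarginals{\comp{(l\times\identityOn{\otherspace{}})}{c}}{\mu}{}{} = \freemarginals{c}{\pushforward{l}{\mu}}{}{}$ follows from two matching inequalities, each obtained by constructing a feasible plan for the other problem with the same transport cost. The left-to-right (``$\geq$'') direction is essentially a change of variables, whereas the right-to-left (``$\leq$'') direction is the one that demands work: we need to \emph{lift} a plan on $\anotherspace{}\times\otherspace{}$ back to $\statespace{}\times\otherspace{}$, which requires disintegration.

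\textbf{Easy direction ($\geq$).} Let $\plan{\mu}\in\Pp{}{\statespace{}\times\otherspace{}}$ satisfy $\pushforward{(\projectionFromTo{\statespace{}\times\otherspace{}}{\statespace{i}})}{\plan{\mu}}=\mu_i$ for $i=1,\ldots,k$. Set $\plan{\mu'} \coloneqq \pushforward{(l\times\identityOn{\otherspace{}})}{\plan{\mu}}$. Then its $\anotherspace{i}$-marginal is $\pushforward{l_i}{\mu_i}$ for each $i$, so $\plan{\mu'}$ is feasible for the right-hand problem; moreover, the change of variables formula for pushforwards yields $\int c\, \d\plan{\mu'} = \int \comp{(l\times\identityOn{\otherspace{}})}{c}\, \d\plan{\mu}$. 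Taking the infimum over $\plan{\mu}$ gives $\freemarginals{c}{\pushforward{l}{\mu}}{}{} \leq \freemarginals{\comp{(l\times\identityOn{\otherspace{}})}{c}}{\mu}{}{}$.

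\textbf{Hard direction ($\leq$).} Fix $\plan{\mu'}$ feasible for the right-hand problem, and let $\nu \coloneqq \pushforward{\projectionFromTo{\anotherspace{}\times\otherspace{}}{\anotherspace{}}}{\plan{\mu'}}\in\Pp{}{\anotherspace{}}$, which has marginals $\pushforward{l_i}{\mu_i}$. I would construct a lift $\plan{\mu}\in\Pp{}{\statespace{}\times\otherspace{}}$ with prescribed $\statespace{i}$-marginals $\mu_i$ and $\pushforward{(l\times\identityOn{\otherspace{}})}{\plan{\mu}}=\plan{\mu'}$ in two steps. First, using the disintegration theorem on the Polish spaces $\statespace{i}$, write $\mu_i = \int \mu_i^{x_i'}\, \d(\pushforward{l_i}{\mu_i})(x_i')$ with $\mu_i^{x_i'}$ concentrated on $l_i^{-1}(\{x_i'\})$, and define
\begin{equation*}
\xi \coloneqq \int_{\anotherspace{}} \mu_1^{x_1'}\otimes\cdots\otimes\mu_k^{x_k'}\, \d\nu(x_1',\ldots,x_k') \;\in\; \Pp{}{\statespace{}}.
\end{equation*}
One checks that the $\statespace{i}$-marginal of $\xi$ is $\mu_i$ (since $\nu$ has $\anotherspace{i}$-marginal $\pushforward{l_i}{\mu_i}$) and that $\pushforward{l}{\xi}=\nu$ (since $\pushforward{l_i}{\mu_i^{x_i'}}=\delta_{x_i'}$). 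Second, disintegrate $\plan{\mu'} = \int K(x',\cdot)\, \d\nu(x')$ for a Borel kernel $K$ from $\anotherspace{}$ to $\otherspace{}$, and set
\begin{equation*}
\plan{\mu}(\d x,\d y) \coloneqq \xi(\d x)\, K(l(x),\d y).
\end{equation*}
Then $\plan{\mu}$ inherits the correct $\statespace{i}$-marginals from $\xi$, and a direct computation on rectangles $A'\times B\subseteq\anotherspace{}\times\otherspace{}$ gives $\pushforward{(l\times\identityOn{\otherspace{}})}{\plan{\mu}}(A'\times B) = \int_{A'} K(x',B)\, \d\nu(x') = \plan{\mu'}(A'\times B)$, so the two measures coincide. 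The change of variables then yields $\int \comp{(l\times\identityOn{\otherspace{}})}{c}\, \d\plan{\mu} = \int c\, \d\plan{\mu'}$, and taking the infimum over $\plan{\mu'}$ concludes.

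\textbf{Main obstacle.} The only nontrivial ingredient is the measurability of the lift: the existence of the kernel $K$ and of the conditional measures $\mu_i^{x_i'}$ hinges on disintegration in Polish spaces (e.g., \cite[Theorem 5.3.1]{Ambrosio2008}), and the resulting $\plan{\mu}$ must be verified to be a well-defined Borel probability measure on $\statespace{}\times\otherspace{}$. Once this is in place, both equalities in the lemma are immediate, and the identification with the $\inf$-expressions is merely the definition of $\freemarginals{\cdot}{}{}{}$.
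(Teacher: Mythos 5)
Your proposal is correct, and the easy direction coincides with the paper's: push a feasible $\plan{\mu}$ forward through $l\times\identityOn{\otherspace{}}$ and use the change-of-variables formula. Where you diverge is the hard direction. The paper lifts a feasible $\plan{\mu'}$ by gluing it with the deterministic couplings $\plan{\mu_i}=\pushforward{(\id_{\statespace{i}},l_i)}{\mu_i}$ through $k$ successive applications of the gluing lemma, obtaining a plan $\plan{\mu^\ast}$ on $\statespace{}\times\anotherspace{}\times\otherspace{}$; it then computes the cost of its $\statespace{}\times\otherspace{}$-projection coordinate by coordinate, using disintegration and the fact that $z_i=l_i(x_i)$ holds $\mu_i$-a.e.\ under $\plan{\mu_i}$, to show the cost equals $\int c\d\plan{\mu'}$. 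You instead build the lift explicitly: disintegrate each $\mu_i$ along $l_i$ into fiber conditionals $\mu_i^{x_i'}$, mix their products against the $\anotherspace{}$-marginal $\nu$ of $\plan{\mu'}$ to get $\xi\in\Pp{}{\statespace{}}$ with the right marginals and $\pushforward{l}{\xi}=\nu$, and compose with the disintegration kernel $K$ of $\plan{\mu'}$ to define $\plan{\mu}(\d x,\d y)=\xi(\d x)K(l(x),\d y)$; verifying $\pushforward{(l\times\identityOn{\otherspace{}})}{\plan{\mu}}=\plan{\mu'}$ on rectangles makes the cost identity a one-line change of variables. Both routes ultimately rest on disintegration in Polish spaces, but yours delivers a slightly stronger structural statement (every feasible plan on $\anotherspace{}\times\otherspace{}$ is \emph{exactly} the image of a feasible plan on $\statespace{}\times\otherspace{}$), at the price of checking yourself the Borel measurability of $x'\mapsto\mu_1^{x_1'}\otimes\cdots\otimes\mu_k^{x_k'}$ and the well-posedness of the kernel composition, which the paper's argument outsources to the off-the-shelf gluing lemma. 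One small point to make explicit: $\pushforward{l_i}{\mu_i^{x_i'}}=\delta_{x_i'}$ only holds for $\pushforward{l_i}{\mu_i}$-a.e.\ $x_i'$, which suffices here because the $i$-th marginal of $\nu$ is precisely $\pushforward{l_i}{\mu_i}$; this is a caveat to state, not a gap.
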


\begin{proof}
We prove ``$\leq$'' and ``$\geq$'' separately.
%for \eqref{equation:pushforward:stability:1}; \eqref{equation:pushforward:stability:2} is then a special case.
% \begin{enumerate}
%     \item[``$\geq$''] 
We start with ``$\geq$''.
    For any $\plan{\mu} \in \Pp{}{\statespace{}\times\otherspace{}}$ such that $\pushforward{(\projectionFromTo{\statespace{}\times\otherspace{}}{\statespace{}})}{\plan{\mu}} \in \setPlans{\mu}{}$, let
    $
    \plan{\mu'} = \pushforward{(l\times\identityOn{\otherspace{}})}{\plan{\mu}}.
   $
    For $\inIndexSet{i}{1}{k}$ consider $\phi \in \Cb{\anotherspace{i}}$. It holds that
    \begin{align*}
        \int_{\anotherspace{}\times\otherspace{}}
        \phi(z_i)
        \d\plan{\mu'}(z,y)
        &=
        \int_{\anotherspace{}\times\otherspace{}}
        \phi(z_i)
        \d(\pushforward{(l\times\identityOn{\otherspace{}})}{\plan{\mu}})(z_1, \ldots, z_k,y)
        \\&=
        \int_{\statespace{}\times\otherspace{}}
        \phi(l_i(x_i))
        \d\plan{\mu}(x_1, \ldots, x_k, y)
        \\&=
        \int_{\statespace{i}}
        \phi(l_i(x_i))
        \d(\pushforward{(\projectionFromTo{\statespace{}\times\otherspace{}}{\statespace{i}})}{\plan{\mu}})(x_i)
        \\&=
        \int_{\statespace{i}}
        \comp{l_i}{\phi}
        \d\mu_i
        =
        \int_{\anotherspace{i}}
        \phi
        \d(\pushforward{l_i}{\mu_i}).
    \end{align*}
    That is,
    $
    \pushforward{(\projectionFromTo{\anotherspace{}\times\otherspace{}}{\anotherspace{i}})}{\plan{\mu'}}
    =
    \pushforward{l_i}{\mu_i}
    $
    and, thus,
    $
    \pushforward{(\projectionFromTo{\anotherspace{}\times\otherspace{}}{\anotherspace{}})}{\plan{\mu'}} \in \setPlans{\pushforward{l}{\mu}}{}
    $.
    Similarly, for all $\inIndexSet{j}{1}{h}$ we have 
    $
    \pushforward{(\projectionFromTo{\anotherspace{}\times\otherspace{}}{\otherspace{j}})}{\plan{\mu'}}
    =
    \pushforward{(\projectionFromTo{\statespace{}\times\otherspace{}}{\otherspace{j}})}{\plan{\mu}}
    \in \Pp{}{\otherspace{j}}$.
    Therefore, $\plan{\mu'}$ provides the upper bound 
    % is a valid candidate for the infimum in $\freemarginals{c}{\pushforward{l}{\mu}}{}{}$.
    % Moreover,
\begin{equation*}
    \freemarginals{c}{\pushforward{l}{\mu}}{}{}
    % &
    \leq
    % \int_{\anotherspace{}\times\otherspace{}}
    % c
    % \d\plan{\mu'}
    %\\&
    % =
    \int_{\statespace{}\times\otherspace{}}
    \comp{(l\times\identityOn{\otherspace{}})}{c}
    \d\plan{\mu}.
\end{equation*}
    Since $\plan{\mu}$ is arbitrary, we obtain the desired inequality.
    % $
    % \freemarginals{c}{\pushforward{l}{\mu}}{}{}
    % \leq
    % \freemarginals{\comp{(l\times\identityOn{\otherspace{}})}{c}}{\mu}{}{}
    % $.
% \item[``$\leq$''] 

To prove ``$\leq$'',
    fix $\plan{\mu'} \in \Pp{}{\anotherspace{}\times\otherspace{}}$ with $\pushforward{(\projectionFromTo{\anotherspace{}\times\otherspace{}}{\anotherspace{}})}{\plan{\mu'}} \in \setPlans{\pushforward{l}{\mu}}{}$.
    By definition,
    $
    \plan{\mu'} \in \setPlans{\pushforward{l}{\mu}}{\pushforward{(\projectionFromTo{\anotherspace{}\times\otherspace{}}{\otherspace{}})}{\plan{\mu'}}}.
    $
    Then, for all $\inIndexSet{i}{1}{k}$, let 
    $
    \plan{\mu_{i}} = \pushforward{\left(\id_{\statespace{i}}, l_{i}\right)}{\mu_i}\in \Pp{}{\statespace{i}\times\anotherspace{i}}.
    $
    Analogously to the previous step, we have $\plan{\mu_{i}} \in \setPlans{\mu_i}{\pushforward{l_{i}}{\mu_i}}$.  
    We can ``glue'' $\{\plan{\mu_{i}}\}_{i = 1}^k$ and $\plan{\mu'}$ to obtain $\plan{\mu^\ast} \in \Pp{}{\statespace{}\times\anotherspace{}\times\otherspace{}}$ such that
    $
    \pushforward{(\projectionFromTo{\statespace{}\times\anotherspace{}\times\otherspace{}}{\statespace{}\times\anotherspace{}})}{\plan{\mu^\ast}} \in \setPlans{\mu}{\pushforward{l}{\mu}}. 
    $
    Specifically, we apply $k$ times \cite[Gluing lemma]{Villani2007} as follows. First, we glue $\plan{\mu'}$ and $\plan{\mu_1}$, since they share a marginal:
    \begin{equation*}\pushforward{(\projectionFromTo{\anotherspace{}\times\otherspace{}}{\anotherspace{1}})}{\plan{\mu'}} 
    =  
    \pushforward{l_{1}}{\mu_1} 
    = 
    \pushforward{(\projectionFromTo{\statespace{1}\times\anotherspace{1}}{\anotherspace{1}})}{\plan{\mu_{1}}}.
    \end{equation*}
    Call the resulting plan
    $
    \plan{\mu^*_1} \in \setPlans{\mu_1}{\pushforward{l}{\mu}, \pushforward{(\projectionFromTo{\anotherspace{}\times\otherspace{}}{\otherspace{}})}{\plan{\mu'}}}. 
    $
    Next, we define inductively
    \begin{equation*}
    \plan{\mu^*_i} \in \setPlans{\mu_1, \ldots, \mu_i}{\pushforward{l}{\mu}, \pushforward{(\projectionFromTo{\anotherspace{}\times\otherspace{}}{\otherspace{}})}{\plan{\mu'}}}
    \end{equation*}
    as the plan obtained from gluing $\plan{\mu^*_{i - 1}}$ and $\plan{\mu_{i}}$ for $\inIndexSet{i}{2}{k}$. The definition is well-posed in view of \cite[Gluing lemma]{Villani2007}, since
    \begin{equation*}\pushforward{(\projectionFromTo{\statespace{1}\times\ldots\times\statespace{i}\times\anotherspace{}\times\otherspace{}}{\anotherspace{i}})}{\plan{\mu^\ast_{i - 1}}} = \pushforward{l_{i}}{\mu_i} = \pushforward{(\projectionFromTo{\statespace{i}\times\anotherspace{i}}{\anotherspace{i}})}{\plan{\mu_{i}}}.
    \end{equation*}
    Finally, we take $\plan{\mu^\ast} = \plan{\mu^\ast_k}$, so that
    \begin{equation*}
    \plan{\mu} = \pushforward{(\projectionFromTo{\statespace{}\times\otherspace{}\times\anotherspace{}}{\statespace{}\times\otherspace{}})}{\plan{\mu^\ast}} \in \SetPlans{\mu_1,\ldots,\mu_k}{\pushforward{(\projectionFromTo{\anotherspace{}\times\otherspace{}}{\otherspace{}})}{\plan{\mu'}}}.
    \end{equation*}
    Let $\bar{\statespace{}}\coloneqq \statespace{1}\times\ldots\times\statespace{k-1}$, $\bar{\anotherspace{}}\coloneqq \anotherspace{1}\times\ldots\times\anotherspace{k-1}$, and $\bar l \coloneqq l_1\times\ldots\times l_{k-1}$. Then, for the $\nth{k}$ argument of $c$,
    \begin{align*}
    &\int_{\statespace{}\times\otherspace{}} 
    (\comp{(l\times\identityOn{\otherspace{}})}{c})(x,y)
    \d\plan{\mu}(x,y)
    \\&=
    \int_{\statespace{} \times \anotherspace{} \times \otherspace{}} 
    (\comp{(l\times\identityOn{\otherspace{}})}{c})(x, y)
    \d\plan{\mu^\ast}(x, z, y)
    \\\overset{\clubsuit}&{=} 
    \int_{\statespace{k}\times\anotherspace{k}}
    \int_{\bar{\statespace{}}\times\bar{\anotherspace{}}\times\otherspace{}}
    (\comp{(l\times\identityOn{\otherspace{}})}{c})(\bar x, x_k, y)
    \d\plan{\tilde{\mu}^{x_kz_k}}(\bar x,\bar z, y)
    \d\plan{\mu_{k}}(x_k, z_k)
    \\&=
    \int_{\statespace{k}\times \anotherspace{k}}
    \int_{\bar{\statespace{}}\times\bar{\anotherspace{}}\times\otherspace{}}
    c(\bar l(\bar x), l_k(x_k), y)
    \d\plan{\tilde{\mu}^{x_kz_k}}(\bar x,\bar z, y)
    \d\plan{\mu_{k}}(x_k, z_k)
    \\\overset{\spadesuit}&{=} 
    \int_{\statespace{k}\times \anotherspace{k}}
    \int_{\bar{\statespace{}}\times\bar{\anotherspace{}}\times\otherspace{}}
    c(\bar l(\bar x), z_k, y)
    \d\plan{\tilde{\mu}^{x_kz_k}}(\bar x,\bar z, y)
    \d\plan{\mu_{k}}(x_k, z_k)
    \\\overset{\clubsuit}&{=}
    \int_{\statespace{}\times\anotherspace{}\times\otherspace{}} 
    c(\bar l(\bar x), z_k, y)
    \d\plan{\mu^*}(x,z,y),
    \end{align*}
    where in $\clubsuit$ we used the disintegration theorem (see \cite[Theorem 5.3.1]{Ambrosio2008}), which provides us a collection $\{\plan{\tilde{\mu}^{x_kz_k}}\}_{(x_k,z_k) \in \statespace{k}\times\anotherspace{k}}$ to complement $\plan{\mu_k}$. Then, in $\spadesuit$, we used the definition of $\plan{\mu_{k}}$: $z_k = l_k(x_k)$ $\mu_k$-$\ae$.
    Repeating the same steps for the other arguments of $c$, we obtain 
    \begin{align*}
    \freemarginals{\comp{(l\times\identityOn{\otherspace{}})}{c}}{\mu}{}{}
    &\leq
    \int_{\statespace{}\times\otherspace{}}
    c(l(x), y) 
    \d\plan{\mu}(x, y)
    \\&=
    \int_{\statespace{}\times\anotherspace{}\times\otherspace{}}
    c(z, y)
    \d\plan{\mu^\ast}(x,z,y)
    % \\&
    =
    \int_{\anotherspace{}\times\otherspace{}}
    c
    \d\plan{\mu'}.
    \end{align*}
    Since $\plan{\mu'}$ is arbitrary, it follows that
    $
    \freemarginals{\comp{(l\times\identityOn{\otherspace{}})}{c}}{\mu}{}{}
    \leq
    \freemarginals{c}{\pushforward{l}{\mu}}{}{}
    $.
% \end{enumerate}
\end{proof}

The next result expresses the sum of two optimal transport discrepancies, possibly with free marginals, as a single optimal transport discrepancy with the same free marginals. Similar results provide multi-marginal reformulations for Wasserstein barycenters \cite{Martial2011,Lindheim2022}, whose computation has recently received much interest \cite{Altschuler2020,Cuturi2013bary}.

\begin{lemma}[Sum of optimal transport discrepancies]\label{lemma:compositionality:free-marginals}
Given transportation costs $c_1: \statespace{1} \times \anotherspace{} \to \nonnegativeRealsBar$, $c_2: \statespace{}\times\otherspace{} \to \nonnegativeRealsBar$, and probability measures $\{\mu_i \in \Pp{}{\statespace{i}}\}_{i = 1}^k$, $\mu \coloneqq (\mu_1,\ldots,\mu_k)$, $\nu \in \Pp{}{\anotherspace{}}$, it holds that
\begin{equation*}
\kantorovich{c_1}{\mu_1}{\nu}{}
+ 
\freemarginals{c_2}{\mu}{}{}
=
\freemarginals{c}{\mu}{\nu}{},
\end{equation*}
with $c: \statespace{}\times\otherspace{}\times\anotherspace{} \to \nonnegativeRealsBar$ defined as
$
c
(x_1, \ldots, x_k, y, z) 
= 
% \comp{l}{V}(x_1) 
% + 
c_1
(x_1, z) 
+ 
c_2
(x_1, \ldots, x_k, y).
$
\begin{comment}
In particular, when $c_2$ is a map $\statespace{} \to \nonnegativeRealsBar$, it holds:
\begin{equation}\label{equation:proposition:compositionality:free-marginals:2}
\kantorovich{c_1}{\mu_1}{\nu}{}
+ 
\kantorovich{c_2}{\mu}{}{}
=
\kantorovich{c}{\mu}{\nu}{},
\end{equation}
with $c: \statespace{}\times \statespace{k} \to \nonnegativeRealsBar$ defined as 
$
c(x_1, \ldots, x_k, z) 
= 
% \comp{l}{V}(x_1) 
% + 
c_1(x_1, z) 
+ 
c_2(x_1, \ldots, x_k).
$
\end{comment}
\end{lemma}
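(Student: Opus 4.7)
The plan is to establish the two inequalities $\leq$ and $\geq$ separately, using only elementary manipulations together with the gluing lemma \cite[Gluing lemma]{Villani2007} that was already invoked in the proof of \cref{lemma:pushforward:stability}.

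\textbf{The easy direction} ($\geq$): I take any plan $\plan{\gamma}\in\Pp{}{\statespace{}\times\otherspace{}\times\anotherspace{}}$ admissible for $\freemarginals{c}{\mu}{\nu}{}$, i.e.\ with $\pushforward{(\projectionFromTo{\statespace{}\times\otherspace{}\times\anotherspace{}}{\statespace{i}})}{\plan{\gamma}}=\mu_i$ for $\inIndexSet{i}{1}{k}$ and $\pushforward{(\projectionFromTo{\statespace{}\times\otherspace{}\times\anotherspace{}}{\anotherspace{}})}{\plan{\gamma}}=\nu$. Its marginal on $\statespace{1}\times\anotherspace{}$ belongs to $\setPlans{\mu_1}{\nu}$, and its marginal on $\statespace{}\times\otherspace{}$ satisfies the free-marginal constraint of $\freemarginals{c_2}{\mu}{}{}$. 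Since $c$ is a sum of functions depending on disjoint blocks of variables, integrating $c$ against $\plan{\gamma}$ splits as the sum of the two corresponding expectations, each of which bounds the respective infimum from above. Taking the infimum over $\plan{\gamma}$ gives $\freemarginals{c}{\mu}{\nu}{}\geq \kantorovich{c_1}{\mu_1}{\nu}{}+\freemarginals{c_2}{\mu}{}{}$.

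\textbf{The harder direction} ($\leq$): I fix $\varepsilon>0$ and pick $\varepsilon$-optimal candidates $\plan{\gamma}_1\in\setPlans{\mu_1}{\nu}$ for $\kantorovich{c_1}{\mu_1}{\nu}{}$ and $\plan{\gamma}_2\in\Pp{}{\statespace{}\times\otherspace{}}$, with $\pushforward{(\projectionFromTo{\statespace{}\times\otherspace{}}{\statespace{}})}{\plan{\gamma}_2}\in\setPlans{\mu}{}$, for $\freemarginals{c_2}{\mu}{}{}$. These two plans share the common marginal $\mu_1$ on $\statespace{1}$, so the gluing lemma produces a plan $\plan{\gamma}^\ast\in\Pp{}{\statespace{}\times\otherspace{}\times\anotherspace{}}$ whose $(\statespace{1},\anotherspace{})$-marginal is $\plan{\gamma}_1$ and whose $(\statespace{}, \otherspace{})$-marginal is $\plan{\gamma}_2$. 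In particular, the first-$k$ marginals of $\plan{\gamma}^\ast$ are exactly $\mu_1,\ldots,\mu_k$ and the $\anotherspace{}$-marginal is $\nu$, so $\plan{\gamma}^\ast$ is admissible for $\freemarginals{c}{\mu}{\nu}{}$. Integrating $c$ against $\plan{\gamma}^\ast$ and using the additive structure of $c$ yields $\freemarginals{c}{\mu}{\nu}{}\leq \kantorovich{c_1}{\mu_1}{\nu}{}+\freemarginals{c_2}{\mu}{}{}+2\varepsilon$, and letting $\varepsilon\downarrow 0$ concludes.

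\textbf{Main obstacle.} The only nontrivial step is the gluing, and subtleties only arise if either infimum is $+\infty$ (then the inequality is vacuous) or $-\infty$ (ruled out by nonnegativity of $c_1,c_2$), so no further regularity hypotheses are needed. The argument is essentially the ``disjoint block'' version of the gluing already used in \cref{lemma:pushforward:stability}, but stated at the level of costs rather than at the level of pushforwards.
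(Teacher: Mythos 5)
Your proposal is correct and follows essentially the same route as the paper: one inequality by restricting an admissible plan to its $(\statespace{1},\anotherspace{})$- and $(\statespace{}\times\otherspace{})$-marginals and splitting the integral, the other by gluing $\varepsilon$-optimal plans along the shared marginal $\mu_1$ via the gluing lemma and letting $\varepsilon\to 0$ (the paper routes the first inequality through \cref{lemma:pushforward:stability}, but the content is the same). One harmless slip: $c_1$ and $c_2$ do not depend on disjoint blocks of variables (both involve $x_1$); only linearity of the integral and the marginal constraints are needed, so the argument is unaffected.
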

\begin{proof}
We prove ``$\leq$'' and ``$\geq$'' separately.
%for \eqref{equation:proposition:compositionality:free-marginals:1}; \eqref{equation:proposition:compositionality:free-marginals:2} follows as a special case.
% \begin{itemize}
% \item[``$\leq$''] 
With the short-hand notation $x \coloneqq (x_1, \ldots, x_k)$, 
``$\leq$'' 
follows from minimizing separately over the shared marginal:
\begin{align*}
\freemarginals{c}{\mu}{\nu}{}
&=
\inf_{
\pushforward{(\projectionFromTo{\statespace{}\times\otherspace{}\times\anotherspace{}}{\statespace{}\times\anotherspace{}})}{\plan{\gamma}
} \in
\setPlans{\mu}{\nu}}
\int_{\statespace{}\times\otherspace{}\times\anotherspace{}}
c_1(x_1, z)
+
c_2(x,y)
\d \plan{\gamma}(x, y, z)
\\&\geq
\inf_{
\pushforward{(\projectionFromTo{\statespace{}\times\otherspace{}\times\anotherspace{}}{\statespace{}\times\anotherspace{}})}{\plan{\gamma}
} \in
\setPlans{\mu}{\nu}}
\int_{\statespace{}\times\otherspace{}\times\anotherspace{}}
c_1(x_1, z)
\d \plan{\gamma}(x, y, z)
\\&\quad + 
\inf_{
\pushforward{(\projectionFromTo{\statespace{}\times\otherspace{}\times\anotherspace{}}{\statespace{}\times\anotherspace{}})}{\plan{\gamma}
} \in
\setPlans{\mu}{\nu}}
\int_{\statespace{}\times\otherspace{}\times\anotherspace{}}
c_2(x, y)
\d \plan{\gamma}(x, y, z)
\\&=
\inf_{
\pushforward{(\projectionFromTo{\statespace{}\times\otherspace{}\times\anotherspace{}}{\statespace{}\times\anotherspace{}})}{\plan{\gamma}
} \in
\setPlans{\mu}{\nu}}
\int_{\statespace{1}\times\anotherspace{}}
c_1(x_1, z)
\d (\pushforward{
(\projectionFromTo{\statespace{}\times\otherspace{}\times\anotherspace{}}{\statespace{1}\times\anotherspace{}})
}{\plan{\gamma}})(x_1, z)
\\&\quad + 
\inf_{\pushforward{
(\projectionFromTo{\statespace{}\times\otherspace{}\times\anotherspace{}}{\statespace{}\times\anotherspace{}})
}{\plan{\gamma}} \in
\setPlans{\mu}{\nu}}
\int_{\statespace{} \times\otherspace{}}
c_2(x,y)
\d(\pushforward{(\projectionFromTo{\statespace{}\times\otherspace{}\times\anotherspace{}}{\statespace{}\times\otherspace{}})}
{\plan{\gamma}})(x,y)
\\\overset{\heartsuit}&{=}
\inf_{\plan{\gamma_1} \in \setPlans{\mu_1}{\nu}}
\int_{\statespace{1}\times\anotherspace{}}
c_1
\d \plan{\gamma_1}
+ 
\inf_{\pushforward{(
\projectionFromTo{\statespace{}\times\otherspace{}}{\statespace{}}
)}{\plan{\gamma_2}} \in 
\setPlans{\mu}{}}
\int_{\statespace{}\times\otherspace{}}
c_2
\d \plan{\gamma_2}
\\&=
\kantorovich{c_1}{\mu_1}{\nu}{}
+ 
\freemarginals{c_2}{\mu}{}{},
\end{align*}
where in $\heartsuit$ (i) we noticed that the first infimum is only over $\pushforward{(\projectionFromTo{\statespace{}\times\otherspace{}\times\anotherspace{}}{\statespace{1}\times\anotherspace{}})}{\plan{\gamma}} = \plan{\gamma'} \in \setPlans{\mu_1}{\nu}$, and (ii) in the second infimum we used \cref{lemma:pushforward:stability} with the pushforward map being $\projectionFromTo{\statespace{}\times\otherspace{}\times\anotherspace{}}{\statespace{}\times\otherspace{}}$.

% \item[``$\geq$'']
We now prove ``$\geq$''.
For all $\varepsilon > 0$, consider $\varepsilon$-optimal $\epsilonPlan{\gamma_1}{\varepsilon}{} \in \setPlans{\mu_1}{\nu}$ and $\epsilonPlan{\gamma_2}{\varepsilon}{} \in \Pp{}{\statespace{}\times\otherspace{}}$ so that $\pushforward{(\projectionFromTo{\statespace{}\times\otherspace{}}{\statespace{}})}{\epsilonPlan{\gamma_2}{\varepsilon}{}} \in \setPlans{\mu}{}$; i.e., 
\begin{equation*}
\int_{\statespace{1}\times\anotherspace{}}c_1\d\epsilonPlan{\gamma_1}{\varepsilon}{} \leq \kantorovich{c_1}{\mu_1}{\nu}{} + \varepsilon
\quad\text{and}\quad
\int_{\statespace{}\times\otherspace{}}c_2\d\epsilonPlan{\gamma_2}{\varepsilon}{} \leq \freemarginals{c_2}{\mu}{}{} + \varepsilon.
\end{equation*}
Since 
$
\pushforward{(\projectionFromTo{\statespace{1}\times\anotherspace{}}{\statespace{1}})}{\epsilonPlan{\gamma_1}{\varepsilon}{}} = \mu_1 = \pushforward{(\projectionFromTo{\statespace{}\times\otherspace{}}{\statespace{1}})}{\epsilonPlan{\gamma_2}{\varepsilon}{}}
$,
we can glue them \cite[Gluing lemma]{Villani2007} to obtain
$
\epsilonPlan{\gamma}{\varepsilon}{} \in \setPlans{\mu,\nu}{\pushforward{(\projectionFromTo{\statespace{}\times\otherspace{}}{\otherspace{}})}{\epsilonPlan{\gamma_2}{\varepsilon}{}}}.
$
% In particular, 
% $
% \pushforward{(\projectionFromTo{\statespace{}\times\otherspace{}\times\anotherspace{}}{\statespace{}})}{\plan{\gamma}} \in \setPlans{\indexedVar{\mu}{1}{k}}{}$,
% $
% \pushforward{(\projectionFromTo{\statespace{}\times\otherspace{}\times\anotherspace{}}{\statespace{1}\times\anotherspace{}})}{\plan{\gamma}} = \plan{\gamma_1}
% $, and
% $
% \pushforward{(\projectionFromTo{\statespace{}\times\otherspace{}\times\anotherspace{}}{\statespace{}\times\otherspace{}})}{\plan{\gamma}} = \plan{\gamma_2}.
% $
Then it holds that
\begin{displaymath}
% \freemarginals{c}{\mu}{\nu}{}
% &\leq 
\int_{\statespace{}\times \otherspace{}\times\anotherspace{}}
c
\d \epsilonPlan{\gamma}{\varepsilon}{}
% \\&=
=
\int_{\statespace{1}\times\anotherspace{}}
c_1
\d 
\underbrace{
(\pushforward{(\projectionFromTo{\statespace{}\times\otherspace{}\times\anotherspace{}}{\statespace{1}\times\anotherspace{}})}{\epsilonPlan{\gamma}{\varepsilon}{}})
}_{\epsilonPlan{\gamma_1}{\varepsilon}{}}
+
\int_{\statespace{}\times\otherspace{}}
c_2
\d 
\underbrace{
(\pushforward{(\projectionFromTo{\statespace{}\times\otherspace{}\times\anotherspace{}}{\statespace{}\times\otherspace{}})}{\epsilonPlan{\gamma}{\varepsilon}{}})
}_{\epsilonPlan{\gamma_2}{\varepsilon}{}}
\end{displaymath}
and, thus, $\freemarginals{c}{\mu}{\nu}{} \leq 
\kantorovich{c_1}{\mu_1}{\nu}{} + \freemarginals{c_2}{\mu}{}{} 
+ 
2\varepsilon$. Let $\varepsilon \to 0$ to conclude.
% \end{itemize}
\end{proof}

In particular, when $\kantorovich{c_1}{}{}{}$ is an expected value, the composition simplifies.
%and $\freemarginals{c_2}{}{}{}$ is an optimal transport problem between two marginals, the composition can be expressed as the latter:
\begin{lemma}[Compositionality of optimal transport]\label{lemma:compositionality:optimal-transport}
Given a cost $v: \statespace{} \to \nonnegativeRealsBar$, a transportation cost $c: \otherspace{} \times \anotherspace{} \to \nonnegativeRealsBar$, a map $l: \statespace{} \to \otherspace{}$, and probability measures $\mu \in \Pp{}{\statespace{}}, \nu \in \Pp{}{\anotherspace{}}$, it holds that
\begin{equation*}
\expectedValue{\mu}{v}(\mu)
+
\kantorovich{c}{\pushforward{l}{\mu}}{\nu}{}
=
\kantorovich{v + \comp{(l\times\identityOn{\otherspace{}})}{c}}{\mu}{\nu}{}.
\end{equation*}
\end{lemma}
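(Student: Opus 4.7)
The plan is to reduce the identity to a direct combination of the two preceding lemmata, namely \cref{lemma:pushforward:stability} (pushforward and optimal transport) and \cref{lemma:compositionality:free-marginals} (sum of optimal transport discrepancies), together with \cref{remark:background:expectedvalue}. The underlying idea is that the left-hand side already has the shape required by the sum-of-discrepancies lemma, provided the pushforward $\pushforward{l}{\mu}$ is first absorbed into the transportation cost.

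First, I would apply \cref{lemma:pushforward:stability} to the product map $l\times\identityOn{\anotherspace{}}\colon \statespace{}\times\anotherspace{}\to\otherspace{}\times\anotherspace{}$, with two fixed marginals ($\mu$ and $\nu$) and no free marginals (i.e., $k=2$, $l_1=l$, $l_2=\identityOn{\anotherspace{}}$, and an empty $\otherspace{}$ in the notation of that lemma). Since the identity pushes $\nu$ to itself, this gives
\begin{equation*}
\kantorovich{c}{\pushforward{l}{\mu}}{\nu}{}
=\kantorovich{\comp{(l\times\identityOn{\anotherspace{}})}{c}}{\mu}{\nu}{}.
\end{equation*}
Second, by \cref{remark:background:expectedvalue} the expected value rewrites as the degenerate one-marginal optimal transport cost $\expectedValue{\mu}{v}=\kantorovich{v}{\mu}{}{}$. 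Third, I would invoke \cref{lemma:compositionality:free-marginals} with $k=1$, shared marginal $\mu$, costs $c_1=\comp{(l\times\identityOn{\anotherspace{}})}{c}$ and $c_2=v$, and no free marginals, to collapse the sum of the two discrepancies into a single one:
\begin{equation*}
\expectedValue{\mu}{v}
+\kantorovich{\comp{(l\times\identityOn{\anotherspace{}})}{c}}{\mu}{\nu}{}
=\kantorovich{v+\comp{(l\times\identityOn{\anotherspace{}})}{c}}{\mu}{\nu}{}.
\end{equation*}
Chaining these three equalities yields the claim.

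I do not anticipate a genuine obstacle: the entire proof is bookkeeping on which spaces play the role of fixed versus free marginals in the two preceding lemmata. The only mildly delicate point is type-matching when applying \cref{lemma:pushforward:stability}, so that only $\mu$ is actually transported through $l$ while $\nu$ is left untouched by the identity on $\anotherspace{}$; once this is set up correctly, the rest follows mechanically by specializing \cref{lemma:compositionality:free-marginals} to the ``no free marginals'' regime, which is precisely what reduces its first summand to an integral against $\mu$.
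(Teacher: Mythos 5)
Your proof is correct and takes essentially the paper's route: the paper disposes of this lemma as a special case of \cref{lemma:pushforward:stability}, with the expected-value term absorbed trivially (every plan in $\setPlans{\mu}{\nu}$ integrates $v$ to $\expectedValue{\mu}{v}$), which is exactly what your degenerate ($k=1$, no free marginals) invocation of \cref{lemma:compositionality:free-marginals} formalizes. Your type-matching, in particular reading the identity factor as $\identityOn{\anotherspace{}}$ so that only $\mu$ is pushed through $l$ while $\nu$ is untouched, is the correct interpretation of the statement.
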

\begin{proof}
The statement is a special case of~\cref{lemma:pushforward:stability}.
\begin{comment}
With~\cref{lemma:pushforward:stability}:
\begin{align*}
\int_{\statespace{}}v\d\mu + \inf_{\plan{\gamma'} \in \setPlans{\pushforward{l}{\mu}}{\nu}} \int_{\anotherspace{}\times \otherspace{}} c \d\plan{\gamma'} 
&=
\int_{\statespace{}}v\d\mu 
+ 
\inf_{\plan{\gamma} \in \setPlans{\mu}{\nu}} 
\int_{\statespace{}\times \otherspace{}} 
\comp{(l\times\identityOn{\otherspace{}})}{c} 
\d\plan{\gamma} 
\\
&= \inf_{\plan{\gamma} \in \setPlans{\mu}{\nu}} 
\int_{\statespace{}\times \otherspace{}} 
v + \comp{(l \times \identityOn{\otherspace{}})}{c}
\d\plan{\gamma}.
\end{align*}
\end{comment}
\end{proof}

Finally, we give a useful disintegration property of the cost term $\freemarginals{c}{}{}{}$.

\begin{lemma}[Disintegration of the optimizer]\label{lemma:disintegration}
Given a transportation cost $c: \statespace{}\times\otherspace{}\times\anotherspace{} \to \nonnegativeRealsBar$ and probability measures $\mu \in \Pp{}{\statespace{}}$, $\nu \in \Pp{}{\otherspace{}}$, it holds that
\begin{multline*}
\inf_{\pushforward{(\projectionFromTo{\statespace{}\times\otherspace{}\times\anotherspace{}}{\statespace{}\times\otherspace{}})}{\plan{\gamma}} \in \setPlans{\mu}{\nu}}
\int_{\statespace{}\times\otherspace{}\times\anotherspace{}}
c(x,y,z)
\d\plan{\gamma}(x,y,z)
\\=
\inf_{\plan{\gamma'} \in \setPlans{\mu}{\nu}} 
\inf_{
\{\xi^{xy}\} \in \Lambda(\anotherspace{})
}
\int_{\statespace{}\times\otherspace{}}
\int_{\anotherspace{}}
c(x, y, z)
\d\xi^{xy}(z)
\d\plan{\gamma'}(x,y),
\end{multline*}
where $
\Lambda(\anotherspace{}) 
\coloneqq
\{
\{\xi^{xy}\}_{(x,y) \in \statespace{} \times \otherspace{}}
\subseteq \Pp{}{\anotherspace{}} 
\st
\statespace{}\times\otherspace{} \ni (x, y) \mapsto \xi^{xy} \in \Pp{}{\anotherspace{}} \text{ Borel}
\}.
$
\end{lemma}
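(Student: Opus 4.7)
The plan is to establish the equality by a two-sided inequality, exploiting the disintegration theorem for the ``$\geq$'' direction and a product/gluing construction for the ``$\leq$'' direction.

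For ``$\geq$'', take any $\plan{\gamma}\in\Pp{}{\statespace{}\times\otherspace{}\times\anotherspace{}}$ with $\plan{\gamma'}\coloneqq \pushforward{(\projectionFromTo{\statespace{}\times\otherspace{}\times\anotherspace{}}{\statespace{}\times\otherspace{}})}{\plan{\gamma}}\in\setPlans{\mu}{\nu}$. Since all spaces are Polish, the disintegration theorem \cite[Theorem 5.3.1]{Ambrosio2008} yields a $\plan{\gamma'}$-a.e.\ uniquely determined Borel family $\{\xi^{xy}\}_{(x,y)\in\statespace{}\times\otherspace{}}\subseteq\Pp{}{\anotherspace{}}$ (hence $\{\xi^{xy}\}\in\Lambda(\anotherspace{})$) such that
\begin{displaymath}
\int_{\statespace{}\times\otherspace{}\times\anotherspace{}} c(x,y,z)\d\plan{\gamma}(x,y,z)
=
\int_{\statespace{}\times\otherspace{}}\int_{\anotherspace{}} c(x,y,z)\d\xi^{xy}(z)\d\plan{\gamma'}(x,y).
\end{displaymath}
Taking the infimum on both sides over admissible $\plan{\gamma}$ yields the inequality ``$\geq$''.

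For ``$\leq$'', pick $\plan{\gamma'}\in\setPlans{\mu}{\nu}$ and $\{\xi^{xy}\}\in\Lambda(\anotherspace{})$. The Borel measurability of $(x,y)\mapsto\xi^{xy}$ allows to define a Borel probability measure $\plan{\gamma}\in\Pp{}{\statespace{}\times\otherspace{}\times\anotherspace{}}$ through
\begin{displaymath}
\int_{\statespace{}\times\otherspace{}\times\anotherspace{}} f\d\plan{\gamma}
\coloneqq
\int_{\statespace{}\times\otherspace{}}\int_{\anotherspace{}} f(x,y,z)\d\xi^{xy}(z)\d\plan{\gamma'}(x,y)
\end{displaymath}
for every bounded Borel $f$. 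Testing with $f(x,y,z)=\phi(x,y)$, $\phi\in\Cb{\statespace{}\times\otherspace{}}$, and using that $\xi^{xy}$ is a probability measure shows $\pushforward{(\projectionFromTo{\statespace{}\times\otherspace{}\times\anotherspace{}}{\statespace{}\times\otherspace{}})}{\plan{\gamma}}=\plan{\gamma'}\in\setPlans{\mu}{\nu}$. Hence $\plan{\gamma}$ is feasible for the LHS, and testing with $f=c$ gives the reverse inequality.

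The only delicate point is the Borel measurability required in both directions: on the one hand, the disintegration theorem supplies it for free in Polish spaces; on the other hand, the defining formula for $\plan{\gamma}$ yields a bona fide Borel measure precisely because $(x,y)\mapsto\xi^{xy}$ is Borel, so the integral $\int_{\anotherspace{}}\mathbf{1}_{B}(x,y,z)\d\xi^{xy}(z)$ is Borel in $(x,y)$ for every Borel $B\subseteq\statespace{}\times\otherspace{}\times\anotherspace{}$. Once this is checked, the two inequalities combine to give the claim.
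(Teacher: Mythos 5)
Your proof is correct and takes essentially the same route as the paper's: the disintegration theorem \cite[Theorem 5.3.1]{Ambrosio2008} gives ``$\geq$'', and the reverse construction of a measure $\plan{\gamma}$ from $\plan{\gamma'}$ and the Borel family $\{\xi^{xy}\}$, together with the test against $\phi\in\Cb{\statespace{}\times\otherspace{}}$ to identify the marginal, gives ``$\leq$'', with the same measurability observations. The only cosmetic difference is that you define $\plan{\gamma}$ via bounded Borel test functions and then plug in the nonnegative, possibly unbounded $c$, which needs a routine monotone-convergence extension; the paper instead defines $\plan{\gamma}$ on Borel sets and invokes the iterated-integral identity for nonnegative Borel maps directly.
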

\begin{proof}
We prove ``$\geq$'' and ``$\leq$'' separately.
% \begin{itemize}
% \item[``$\geq$''] 
To prove ``$\geq$'', consider any $\plan{\gamma} \in \Pp{}{\statespace{}\times\otherspace{}\times\anotherspace{}}$ such that $\pushforward{(\projectionFromTo{\statespace{}\times\otherspace{}\times\anotherspace{}}{\statespace{}\times\otherspace{}})}{\plan{\gamma}} \in \setPlans{\mu}{\nu}$. By~\cite[Theorem 5.3.1]{Ambrosio2008}, there exists $\{\plan{\gamma}^{xy}\} \in \Lambda(\anotherspace{})$ such that
\begin{displaymath}
\begin{aligned}
\int_{\statespace{}\times\otherspace{}\times\anotherspace{}}
c
\d\plan{\gamma}
&=
\int_{\statespace{}\times\otherspace{}}
\int_{\anotherspace{}}
c(x, y, z)
\d\plan{\gamma}^{xy}(z)
\d(\pushforward{(\projectionFromTo{\statespace{}\times\otherspace{}\times\anotherspace{}}{\statespace{}\times\otherspace{}})}{\plan{\gamma}})(x, y)
\\
&\geq
\inf_{\plan{\gamma'} \in \setPlans{\mu}{\nu}} 
\inf_{\{\xi^{xy}\} \in \Lambda(\anotherspace{})}
\int_{\statespace{}\times\otherspace{}}
\int_{\anotherspace{}}
c(x, y, z)
\d\xi^{xy}(z)
\d\plan{\gamma'}(x, y).
\end{aligned}
\end{displaymath}
Then take the infimum over $\plan{\gamma}$.

% \item[``$\leq$'']
To prove ``$\leq$'',
we follow \cite[\S 5.3]{Ambrosio2008} to construct the reverse of the disintegration. Given any $\plan{\gamma'} \in \setPlans{\mu}{\nu}$ and any $\{\xi^{xy}\} \in \Lambda(\anotherspace{})$, then we can construct a Borel probability measure $\plan{\gamma} \in \Pp{}{\statespace{}\times\otherspace{}\times\anotherspace{}}$ defined for every Borel set $B \subseteq \statespace{}\times\otherspace{}\times\anotherspace{}$ as
\begin{equation*}
\plan{\gamma}(B) 
= 
\int_{\statespace{}\times\otherspace{}} 
\int_{\anotherspace{}} 
1_B(x, y, z)
\d \xi^{xy} (z)
\d \plan{\gamma'}(x, y)
,
\end{equation*}
so that for every Borel measurable map $\phi: \statespace{}\times\otherspace{}\times\anotherspace{} \to \nonnegativeRealsBar$ we have (cf. \cite[Definition 11.12 and Definition 11.1]{Aliprantis2006})
\begin{displaymath}
\int_{\statespace{}\times\otherspace{}\times\anotherspace{}}
\phi(x, y, z)\d\plan{\gamma}(x, y, z)
=
\int_{\statespace{}\times\otherspace{}}\int_{\anotherspace{}}
\phi(x, y, z)
\d \xi^{xy} (z)
\d \plan{\gamma'}(x, y).
\end{displaymath}
Then, for $\phi \in \Cb{\statespace{}\times\otherspace{}}$, we have
\begin{align*}
\int_{\statespace{}\times\otherspace{}\times\anotherspace{}} 
\phi(x, y)
\d \plan{\gamma}(x, y, z)
&=
\int_{\statespace{}\times\otherspace{}} 
\int_{\anotherspace{}} 
\phi(x, y)
\d \xi^{xy} (z)
\d \plan{\gamma'}(x, y)
\\&=
\int_{\statespace{}\times\otherspace{}} 
\phi(x, y)
\d \plan{\gamma'}(x, y).
\end{align*}
Thus, $\pushforward{(\projectionFromTo{\statespace{}\times\otherspace{}\times\anotherspace{}}{\statespace{}\times\otherspace{}})}{\plan{\gamma}} = \plan{\gamma'} \in \setPlans{\mu}{\nu}$ and
\begin{multline*}
\int_{\statespace{}\times\otherspace{}}
\int_{\anotherspace{}}
c(x, y, z)
\d\xi^{xy}(z)
\d\plan{\gamma'}(x, y)
\\=
\int_{\statespace{}\times\otherspace{}\times\anotherspace{}}
c
\d\plan{\gamma}
\geq
\inf_{\pushforward{(\projectionFromTo{\statespace{}\times\otherspace{}\times\anotherspace{}}{\statespace{}\times\otherspace{}})}{\plan{\gamma}} \in \setPlans{\mu}{\nu}}
\int_{\statespace{}\times\otherspace{}\times\anotherspace{}}
c
\d\plan{\gamma}.
\end{multline*}
The claim follows taking the infimum over $\plan{\gamma'}$ and $\{\xi^{xy}\}$.
% \end{itemize}
\end{proof}

We are now ready to prove~\cref{theorem:lifting,corollary:lifting:ot}.

\begin{proof}[Proof of~\cref{theorem:lifting}]
We prove the statements separately. To ease the notation, we recall $\refSpace{} \coloneqq \refSpace{k} \times \refSpace{k+1} \times \ldots \refSpace{N}$, and we introduce 
\begin{equation}
\label{equation:shorthand:dpa-step}
c_k \coloneqq \stagecostsmall{k}
+ 
\comp{(\dynamics{k}\times\identityOn{\refSpace{k+1}\times\ldots\times\refSpace{N}})}{\costtogosmall{k+1}}: \statespace{k}\times\inputspace{k}\times\refSpace{}\to\nonnegativeRealsBar.
\end{equation}
\begin{enumerate}[leftmargin=*]
\item 
We proceed by induction. The base case is $\costtogo{N} = \kantorovich{\terminalcostsmall}{}{}{}$ and $\costtogosmall{N} = \terminalcostsmall$. For $k < N$, suppose $\costtogo{k+1}=\kantorovich{\costtogosmall{k+1}}{}{}{}$.
Then the backward recursion gives
\begin{align*}
\costtogo{k}(\mu_k, \refPVar)
&=
\inf_{
\pushforward{(\projectionFromTo{\statespace{k}\times\inputspace{k}}{\statespace{k}})}{\probabilityInput{}{k}}=\mu_k
}
\kantorovich{\stagecostsmall{k}}{\probabilityInput{}{k}}{\refPVar_k}{}
+ 
\costtogo{k+1}(\pushforward{\dynamics{k}}{\probabilityInput{}{k}}, \refPVar_{k+1}, \ldots, \refPVar_N)
\\
&=
\inf_{
\pushforward{(\projectionFromTo{\statespace{k}\times\inputspace{k}}{\statespace{k}})}{\probabilityInput{}{k}}=\mu_k
}
\inf_{\plan{\gamma_1} \in \setPlans{\probabilityInput{}{k}}{\refPVar_k}}
\int_{\statespace{k}\times\inputspace{k}\times \refSpace{k}}
\stagecostsmall{k}
\d \plan{\gamma_1}
\\&\qquad+
\inf_{\plan{\gamma_2} \in \setPlans{\pushforward{\dynamics{k}}{\probabilityInput{}{k}}}{\refPVar_{k+1}, \ldots, \refPVar_N}}
\int_{\statespace{k+1}\times\refSpace{k+1}\times\ldots\times\refSpace{N}} 
\costtogosmall{k+1}
\d \plan{\gamma_2}.
\end{align*}
\cref{lemma:pushforward:stability,lemma:compositionality:free-marginals}, together with the definition of $c_k$ (see \eqref{equation:shorthand:dpa-step}), yield
\begin{align*}
\costtogo{k}(\mu_k, \refPVar)
&=
\inf_{
\substack{
\pushforward{(\projectionFromTo{\statespace{k}\times\inputspace{k}}{\statespace{k}})}{\probabilityInput{}{k}}=\mu_k
\\
\plan{\gamma'} \in \setPlans{\probabilityInput{}{k}}{\refPVar}
}}
\int_{\statespace{k}\times\inputspace{k}\times\refSpace{}}
c_k
\d
\plan{\gamma'}
\\
&=
\inf_{\pushforward{(\projectionFromTo{\statespace{k}\times\inputspace{k}\times\refSpace{}}{\statespace{k}\times\refSpace{}})}{\plan{\gamma'}} \in \setPlans{\mu_k}{\refPVar}}
\int_{\statespace{k}\times\inputspace{k}\times\refSpace{}}
c_k
\d
\plan{\gamma'}.
\end{align*}
\cref{lemma:disintegration} enables us to disintegrate $\costtogo{k}(\mu_k, \refPVar)$ as 
\begin{multline*}
\inf_{
\substack{
\plan{\gamma'}\in\setPlans{\mu_k}{\refPVar}
\\
\{\xi^{x_k\refVar}\} \in \Lambda(\inputspace{k})
}}
\int_{\statespace{k}\times\refSpace{}}
\int_{\inputspace{k}}
c_k(x_k, \inputVariable{}{}{}{}, \refVar)
\d\xi^{x_k\refVar}(\inputVariable{}{}{}{})\d\plan{\gamma'}(x_k, \refVar)
\\
\overset{\clubsuit}{=}
\inf_{\plan{\gamma'}\in\setPlans{\mu_k}{\refPVar}}
\int_{\statespace{k}\times\refSpace{}}
\inf_{
\xi 
\in \Pp{}{\inputspace{k}}
}
\int_{\inputspace{k}}
c_k(x_k, \inputVariable{}{}{}{}, \refVar)
\d\xi(\inputVariable{}{}{}{})\d\plan{\gamma'}(x_k, \refVar).
\end{multline*}
The equality in $\clubsuit$ requires proving ``$\geq$'' and ``$\leq$'' separately.
Let 
\begin{equation*}
\psi(x_k, \xi, \refVar) \coloneqq \int_{\inputspace{k}}
c_k(x_k, \inputVariable{}{}{}{}, \refVar)
\d\xi(\inputVariable{}{}{}{}).    
\end{equation*}
Then, $\Lambda(\inputspace{k}) \subseteq \{\xi^{x_k\refVar} \in \Pp{}{\inputspace{k}}\}$, and
% \begin{displaymath}
% \inf_{
% \{\xi^{x_k\refVar}\} \in \Lambda(\inputspace{k})
% }
% \int_{\statespace{k}\times\refSpace{}}
% \psi(x_k, \xi^{x_k\refVar}, \refVar)
% \d\plan{\gamma'}(x_k, \refVar)
% \geq
% \inf_{
% \{\xi^{x_k\refVar} \in \Pp{}{\inputspace{k}}\}
% }
% \int_{\statespace{k}\times\refSpace{}}
% \psi(x_k, \xi^{x_k\refVar}, \refVar)
% \d\plan{\gamma'}(x_k, \refVar)
% .
% \end{displaymath}
$\psi(x_k, \xi, \refVar) \geq \inf_{\xi \in \Pp{}{\inputspace{k}}} \psi(x_k, \xi, \refVar)$ reveal ``$\geq$''.
To prove ``$\leq$'',
let $\Omega \coloneqq \supp(\mu_k)\times\supp(\refPVar) \subseteq \statespace{k}\times\refSpace{}$ and $\plan{\gamma'}\in\setPlans{\mu_k}{\refPVar}$. By definition, we can restrict the integration domain to the support of $\plan{\gamma'}$, for which it holds that $\supp(\plan{\gamma'}) \subseteq \supp(\mu_k) \times \supp(\refPVar)$. We thus consider $\Omega$ in place of $\statespace{k}\times\refSpace{}$ as the integration domain.
For all $\varepsilon>0$, consider the collection $\{\inputMap{x_k}{\refVar}{\varepsilon/2}{k}\}_{(x_k, \refVar) \in \Omega} \subseteq \inputspace{k}$.
Without loss of generality, we assume that $\inputMap{}{}{\varepsilon/2}{k}$ is Borel; see the discussion in~\cref{section:lifting}. As a consequence of the next lemma (\cref{lemma:borel_lifting}), also the measure-valued map $h: \Omega \to \Pp{}{\inputspace{k}}, h(x_k, \refVar) \coloneqq \delta_{\inputMap{x_k}{\refVar}{\varepsilon/2}{k}}$ is Borel.
Then, $\{\delta_{\inputMap{x_k}{\refVar}{\varepsilon/2}{k}}\}_{(x_k,\refVar) \in \Omega} \in \Lambda(\inputspace{k})$, with $\Lambda(\inputspace{k})$ as in~\cref{lemma:disintegration}, and thus
\begin{align*}
&\int_{\Omega}
\inf_{
\xi 
\in \Pp{}{\inputspace{k}}
}
\int_{\inputspace{k}}
c_k(x_k, \inputVariable{}{}{}{}, \refVar)
\d\xi(\inputVariable{}{}{}{})\d\plan{\gamma'}(x_k, \refVar)
\\&\geq
\int_{\Omega}
\inf_{\inputVariable{}{}{}{} \in \inputspace{k}}
c_k(x_k, \inputVariable{}{}{}{}, \refVar)
\d\plan{\gamma'}(x_k, \refVar) 
\\&\geq
\int_{\Omega}
c_k(x_k, \inputMap{x_k}{\refVar}{\varepsilon/2}{k}, \refVar)
\d\plan{\gamma'}(x_k, \refVar) 
- \frac{\varepsilon}{2}
\\&\geq
\int_{\Omega}
\int_{\inputspace{k}}
c_k(x_k, \inputVariable{}{}{}{}, \refVar)
\d\delta_{\inputMap{x_k}{\refVar}{\varepsilon/2}{k}}(\inputVariable{}{}{}{})\d\plan{\gamma'}(x_k, \refVar)
- \frac{\varepsilon}{2}
\\&\geq
\inf_{
\{\xi^{x_k\refVar}\} \in \Lambda(\inputspace{k})
}
\int_{\Omega}
\int_{\inputspace{k}}
c_k(x_k, \inputVariable{}{}{}{}, \refVar)
\d\xi^{x_k\refVar}(\inputVariable{}{}{}{})
\d\plan{\gamma'}(x_k, \refVar) - \frac{\varepsilon}{2}.
\end{align*}
Take the infimum over $\plan{\gamma'}$ on both sides, and let $\varepsilon \to 0$ to prove ``$\leq$''.

Next, it holds that
\begin{equation*}
\inf_{\xi \in \Pp{}{\inputspace{k}}} 
\int_{\inputspace{k}}
c_k(x_k, \inputVariable{}{}{}{}, \refVar)
\d\xi(\inputVariable{}{}{}{})
\geq
\inf_{\inputVariable{}{}{}{} \in \inputspace{k}}
c_k(x_k, \inputVariable{}{}{}{}, \refVar)
=
\costtogosmall{k}(x,\refVar).
\end{equation*}
For ``$\leq$'', let $\{\inputVariable{}{}{}{n}\}_{n \in \naturals} \subseteq \inputspace{k}$ yield
$
\costtogosmall{k}(x_k,\refVar) 
% = 
% \inf_{\inputVariable{}{}{}{} \in \inputspace{k}} c_k(x_k, \inputVariable{}{}{}{}, \refVar)
= 
\lim_{n \to \infty} c_k(x_k, \inputVariable{}{}{}{n}, \refVar)
$, and consider $\{\delta_{\inputVariable{}{}{}{n}}\}_{n \in \naturals} \subseteq \Pp{}{\inputspace{k}}$.
For all $n \in \naturals$, we have 
\begin{displaymath}
\inf_{\xi \in \Pp{}{\inputspace{k}}} 
\int_{\inputspace{k}}
c_k(x_k, \inputVariable{}{}{}{}, \refVar)
\d\xi(\inputVariable{}{}{}{})
\leq
\int_{\inputspace{k}}
c_k(x_k, \inputVariable{}{}{}{}, \refVar)
\d\delta_{\inputVariable{}{}{}{n}}(\inputVariable{}{}{}{})
=
c_k(x_k, \inputVariable{}{}{}{n}, \refVar).
\end{displaymath}
The limit $n \to \infty$ reveals ``$\leq$''
% \begin{displaymath}
% \inf_{\xi \in \Pp{}{\inputspace{k}}} 
% \int_{\inputspace{k}}
% c_k(x_k, \inputVariable{}{}{}{}, \refVar)
% \d\xi(\inputVariable{}{}{}{})
% \leq
% \costtogosmall{k}(x,\refVar)
% \end{displaymath}
and thus the equality. Thus, for every $x_k \in \statespace{k}, \refVar \in \refSpace{}$, we have 
\begin{multline*}
\inf_{
\xi 
\in \Pp{}{\inputspace{k}}
}
\int_{\inputspace{k}}
c_k(x_k, \inputVariable{}{}{}{}, \refVar)
\d\xi(\inputVariable{}{}{}{})
=
\costtogosmall{k}(x_k, \refVar)
\\\text{and so}\quad
\costtogo{k}(\mu_k, \refPVar)
=
\inf_{\plan{\gamma'}\in\setPlans{\mu_k}{\refPVar}}
\int_{\statespace{k}\times\refSpace{}}
\costtogosmall{k}
\d\plan{\gamma'}.
\end{multline*}

This proves~\eqref{equation:lifting:optimal-transport}. Finally, analogously to the traditional \gls*{acr:dpa}~\cite{Bertsekas2014,Bertsekas2017}, the additivity of the cost structure yields $\costtogo{} = \costtogo{0}$.
\item
Let $\varepsilon\geq 0$, and define $\inputMap{}{}{\varepsilon/2}{k}$, and $\epsilonPlan{\gamma}{\varepsilon/2}{k}$ as in the theorem statement. Consider the (possibly sub-optimal) plan
\begin{equation}
\label{equation:auxiliary-plan:definition}
    \epsilonPlan{\tilde{\gamma}}{\varepsilon}{k} \coloneqq \pushforward{\left(\projectionFromTo{\statespace{k}\times\refSpace{}}{\statespace{k}}, \inputMap{}{}{\varepsilon/2}{k}, \projectionFromTo{\statespace{k}\times\refSpace{}}{\refSpace{}}\right)}{\epsilonPlan{\gamma}{\varepsilon/2}{k}}.
\end{equation}
By definition, $\pushforward{\projectionFromTo{\statespace{k}\times\inputspace{k}\times\refSpace{}}{\statespace{k}\times\inputspace{k}}}{\epsilonPlan{\tilde{\gamma}}{\varepsilon}{k}} = \probabilityInput{\varepsilon}{k}$ and $\pushforward{\projectionFromTo{\statespace{k}\times\inputspace{k}\times\refSpace{}}{\refSpace{}}}{\epsilonPlan{\tilde{\gamma}}{\varepsilon}{k}} = \refPVar$. Therefore, $\epsilonPlan{\tilde{\gamma}}{\varepsilon}{k}$ is a valid choice for the infimum, and it holds that
\begin{align*}
\kantorovich{\stagecostsmall{k}}{\probabilityInput{\varepsilon}{k}}{\refPVar_k}{}
&+ 
\costtogo{k+1}(\pushforward{\dynamics{k}}{\probabilityInput{\varepsilon}{k}}, \refPVar_{k+1}, \ldots, \refPVar_N)
\\
\overset{\heartsuit}&{=}
\inf_{\plan{\gamma'} \in \setPlans{\probabilityInput{\varepsilon}{k}}{\refPVar}}
    \int_{\statespace{k}\times\inputspace{k}\times\refSpace{}}
    c_k(x_k,\inputVariable{}{}{}{k},y)
    \d \plan{\gamma'}(x_k,\inputVariable{}{}{}{k},\refVar)
\\
&\leq
\int_{\statespace{k}\times\inputspace{k}\times\refSpace{}}
c_k(x_k, \inputVariable{}{}{}{k}, \refVar)
\d \epsilonPlan{\tilde{\gamma}}{\varepsilon}{k}(x_k, \inputVariable{}{}{}{k}, \refVar)
\\
\overset{\text{\eqref{equation:auxiliary-plan:definition}}}&{=}
\int_{\statespace{k}\times\refSpace{}}
c_k(x_k, \inputMap{x_k}{y}{\varepsilon/2}{k}, \refVar)
\d \epsilonPlan{\gamma}{\varepsilon/2}{k}(x_k, \refVar)
\\
\overset{\text{\eqref{equation:input:epsilon-optimality}}}&{\leq}
\frac{\varepsilon}{2}
+
\int_{\statespace{k}\times\refSpace{}}
\costtogosmall{k}(x_k, \refVar)
\d \epsilonPlan{\gamma}{\varepsilon/2}{k}(x_k, \refVar)
\\
\overset{\text{\eqref{equation:transport-plan:epsilon-optimal}}}&{\leq}
\varepsilon
+
% \inf_{\plan{\gamma'}\in\setPlans{\mu_k}{\refPVar}}
% \int_{\statespace{k}\times\refSpace{}}
% \costtogosmall{k}(x_k, \refVar)
% \d \plan{\gamma'}(x_k, \refVar),
\costtogo{k}(\mu_k, \refPVar),
\end{align*}
where, in $\heartsuit$, we used the definition of $c_k$ (see \eqref{equation:shorthand:dpa-step}), \cref{lemma:pushforward:stability}, and \cref{lemma:compositionality:free-marginals}. 
Overall, $\probabilityInput{\varepsilon}{k}$ is an $\varepsilon$-optimal control input at $\mu_k$.
When $\varepsilon = 0$, the infima are attained and we obtain the optimal state-input distribution $\probabilityInput{\ast}{k}$.
\item
The statement follows from (ii), plugging in the given maps $\inputMap{}{}{\varepsilon/2}{k}$ and $\transportMap{\varepsilon/2}{k}$.
\ifbool{arxiv}{\qedhere}{}
\end{enumerate}
\end{proof}

\begin{lemma}
\label{lemma:borel_lifting}
For any Borel map $\phi: \statespace{} \to \inputspace{}$, the measure-valued map $h: \statespace{} \to \Pp{}{\inputspace{}}$ defined for every $x \in \statespace{}$ as $h(x) \coloneqq \delta_{\phi(x)}$ is Borel.
\end{lemma}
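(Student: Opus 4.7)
The plan is to use the characterization of Borel measure-valued maps recalled earlier in the paper: $h : \statespace{} \to \Pp{}{\inputspace{}}$ is Borel if and only if, for every Borel set $B \subseteq \inputspace{}$, the scalar map $x \mapsto h(x)(B)$ is Borel measurable. This reduces the claim to checking Borel measurability of a family of real-valued functions, which is routine.

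Concretely, I would fix an arbitrary Borel set $B \subseteq \inputspace{}$ and compute
\begin{equation*}
h(x)(B) \;=\; \delta_{\phi(x)}(B) \;=\; \mathbf{1}_B(\phi(x)) \;=\; \mathbf{1}_{\phi^{-1}(B)}(x).
\end{equation*}
Since $\phi$ is Borel by assumption and $B$ is Borel, the preimage $\phi^{-1}(B)$ is a Borel subset of $\statespace{}$. Hence its indicator $\mathbf{1}_{\phi^{-1}(B)}$ is a Borel function $\statespace{} \to \reals$. As $B$ was arbitrary, the characterization recalled above yields that $h$ is Borel.

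There is essentially no obstacle here: the statement is a direct unwinding of definitions, and the only ingredient beyond the definition of a Dirac measure is that preimages of Borel sets under Borel maps are Borel. The only mild subtlety worth flagging is that the Borel $\sigma$-algebra on $\Pp{}{\inputspace{}}$ is the one generated by the evaluation maps $\mu \mapsto \mu(B)$ for Borel $B$ (equivalently, by the weak topology on the Polish space $\Pp{}{\inputspace{}}$); the equivalence between these two viewpoints is exactly the characterization invoked at the start of the proof.
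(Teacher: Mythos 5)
Your proof is correct and follows essentially the same route as the paper: both invoke the paper's stated criterion that $h$ is Borel iff $x \mapsto h(x)(B)$ is Borel for every Borel $B \subseteq \inputspace{}$, and both reduce this to the Borel measurability of the indicator $\mathbf{1}_B \circ \phi$ (the paper phrases it as composing the Borel map $u \mapsto \delta_u(B)$ with $\phi$, you phrase it as $\mathbf{1}_{\phi^{-1}(B)}$, which is the same thing). No gap.
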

\begin{proof}
To show this, we can equivalently show that, for every $B \subseteq \inputspace{}$ Borel, the pre-image of the intervals $[a, +\infty]$, for $a \in \reals$, of $x \mapsto h(x)(B)$ is Borel. Define $h_B: \inputspace{} \to \nonnegativeReals$ for every $u \in \inputspace{}$ as $\inputVariable{}{}{}{} \mapsto h_B(\inputVariable{}{}{}{}) \coloneqq \delta_{\inputVariable{}{}{}{}}(B)$. Then
\begin{equation*}
h_B^ {-1} ([a, +\infty))
=
\begin{cases}
\emptyset\quad&\text{if }a  > 1\\
B\quad&\text{if }a \in (0, 1]\\
\inputspace{}&\text{otherwise}.
\end{cases}
\end{equation*}
% \begin{displaymath}
% h_B^ {-1} ([a, +\infty)) 
% =
% \begin{cases}
%     \emptyset \quad & \text{if } a > 1
%     \\
%     B & \text{if }a \in (0, 1]
%     \\
%     \inputspace{k} & \text{otherwise.}
% \end{cases}
% \end{displaymath}
In all cases, $h_B^ {-1} ([a, +\infty))$ is Borel set, and thus the map $h_B$ is Borel. Since the composition of Borel maps is a Borel map, $\comp{\phi}{h_B}$ is Borel. Therefore, the measure-valued map $h$ is Borel.
\end{proof}

\begin{proof}
[Proof of \cref{corollary:lifting:ot}]
The proof is analogous to \cref{theorem:lifting}. To express the cost-to-go $\costtogo{k}$ as a two-marginals optimal transport discrepancy, it suffices to replace~\cref{lemma:compositionality:free-marginals} with~\cref{lemma:compositionality:optimal-transport}. The simplified optimal control input $\probabilityInput{\varepsilon}{k}$ follows.
\end{proof}
\section{Conclusions}
\label{section:conclusion}

We showed that many discrete-time finite-horizon optimal control problems in probability spaces are multi-marginal optimal transport problems, whose transportation cost stems from an optimal control problem in the space on which the probability measures are defined.
This implies a separation principle: The optimal control strategy for a fleet of identical agents results from the optimal control strategy of each agent (how does one go from $x$ to $y$?) and an optimal transport problem (who goes from $x$ to $y$?).
We complemented our theoretical results with various examples.
Among others, our results back up many existing approaches in the literature which a priori formalize the distribution/fleet steering problems as an optimal transport problem and not as an optimal control problem in the probability space.
Our analysis is based on novel stability results for the multi-marginal optimal transport problem, whose study is of independent interest.

Future work will explore extensions to noisy dynamics and different cost functionals, the limit cases of the infinite horizon and continuous-time dynamics, and the practical impact of our theoretical results.
% \appendix
% \input{appendices/A_Preliminaries}

\bibliographystyle{siamplain}
\bibliography{references}
\end{document}

% --- supplement: Final_Submission/supplement.tex ---

\maketitle
\section{Optimal transport examples}
\begin{example}[Wasserstein distance between Gaussian measures~{\cite{Talagrand1996}}]
\label{example:optimaltransport:gaussian}
If $p = 2$, $\mu = \gaussian{m}{\Sigma}$ and $\mu^\ast = \gaussian{m_\ast}{\Sigma_\ast}$, then:
\begin{enumerate}
    \item The optimal transport map is $T^\ast: X\to X$ is defined $\mu\ae{}$ as 
    \begin{displaymath}
    T^\ast(x) = \Sigma^{-1/2}(\Sigma^{1/2}\Sigma_\ast\Sigma^{1/2})^{1/2}\Sigma^{-1/2}(x - m) + m_\ast;\text{ and}
    \end{displaymath}
    \item $\wassersteinDistance{2}{\mu}{\mu^\ast}^2 = \norm{m - m_\ast}^2 + \trace\left[\Sigma + \Sigma_\ast - 2\left(\Sigma^{1/2}\Sigma_\ast\Sigma^{1/2}\right)^{1/2}\right]$.
\end{enumerate}
Notice that if $X = \reals$, then the Wasserstein distance becomes:
\begin{displaymath}
\wassersteinDistance{2}{\mu}{\mu^\ast}^2 = (m - m_\ast)^2 + (\Sigma - \Sigma_\ast)^2.
\end{displaymath}
That is, it is the sum of the squared difference of the mean and variance of the two Gaussian distributions.
\end{example}
\begin{example}[Empirical measures]\label{example:optimaltransport:empiricalmeasures}
Let $\mu \in \spaceProbabilityBorelMeasures{X}$ and $\nu \in \spaceProbabilityBorelMeasures{Y}$ be two finite empirical measures:
\begin{displaymath}
\begin{aligned}
\mu &= \sum_{i = 1}^N \alpha_i \delta_{x_i},\quad x_i \in X, \quad \alpha_i \geq 0, \quad \sum_{i = 1}^N \alpha_i = 1,\\
\mu^\ast &= \sum_{i = 1}^M \beta_i \delta_{y_i},\quad y_i \in Y, \quad \beta_i \geq 0, \quad \sum_{i = 1}^M \beta_i = 1.
\end{aligned}
\end{displaymath}
Then, the cost function can be written as a vector $c \in \nonnegativeRealsBar^{NM}$: the $\nth{h}$ entry is $c_{h} = c(x_i, y_j)$, where $h = N(i-1)+j$, $i \in \{1,\ldots,N\}$ and $j \in \{1,\ldots,M\}$.

The optimal transport problem can thus be formulated as a linear program:
\begin{displaymath}
\begin{aligned}
\min\,&c^T v\\
\mathrm{s.t. }\,
&v \in [0,1]^{NM}\\
&(1_M^T \otimes I_N)v = \alpha\\
&(I_M \otimes 1_{N}^T)v = \beta,
\end{aligned}
\end{displaymath}
where $1_K$ is the vector of ones of length $K$, $I_K$ is the $K$ dimensional identity matrix, and $\otimes$ is the Kronecker product.
In particular, it admits an optimal transport plan:
\begin{displaymath}
\plan{\mu}(x_i,y_j) = v_{N(i-1)+j}.
\end{displaymath}
If $N = M$, it can be shown that there exists an optimal transport map. Namely, there is a solution $v \in \{0,1\}^{NM}$.
\end{example}

\begin{example}
Consider the following problem setting:

\emph{
Assume we want to regulate a process multiple times, possibly infinitely many. Each realization of the process has an initial state that is distributed as a Gaussian distribution: $x_0 \sim \mu_0 = \gaussian{m_0}{\Sigma_0}$. The process evolves with the linear dynamics $x_{k+1} = f_k(x_k,u_k(x_k)) = A_k x_k + B_k u_k(x_k) + \omega$, $\omega \sim \gaussian{m_{k\omeg}}{\Sigma_{k\omega}}$, where $u_k$ is an affine control law: 
\begin{displaymath}
u_k \in \probabilityInputSpace{k} = \{\statespace{k}\ni x\mapsto K_kx + \bar{u}_k \in \inputspace{k} \st K_k \in \reals^{p\times n}, \bar{u}_k \in \reals^p\} \subset \C{0}{\statespace{k}}{\inputspace{k}}.
\end{displaymath} 
Not only we want each realization to be regulated optimally according to some quadratic cost, but we aim also at ensuring that all the realizations behave similarly to the nominal one. Namely, we want to reduce the variance of the different realizations at each stage.
}

In our framework, the state is the probability distribution of the possible realizations of the process, which at every stage $k$ is always a Gaussian distribution with mean and the covariance:
\begin{displaymath}
\begin{aligned}
m_{k+1} &= (A_{k} + B_{k} K_{k})m_{k} + B_{k}\bar{u}_{k}\\
\Sigma_{k+1} &= (A_{k}+B_{k}K_{k})\Sigma_{k}(A_{k}+B_{k}K_{k})^T.
\end{aligned}
\end{displaymath}

Then, we consider the cost terms:
\begin{displaymath}
\begin{aligned}
\stagecost{k}(\mu_{k}, (K_{k}, \bar{u}_{k})) 
&= \underbrace{\wassersteinDistance{2{Q_{1k}}}{\mu_{k}}{\delta_0}^2 + \variance{{\mu_{k}}}{q_{2k}^Tx}}_{\text{State penalty}} +  \underbrace{\wassersteinDistance{2{R_{k}}}{\pushforward{{K_{k}\cdot + \bar{u}_{k}}}{\mu_{k}}}{\delta_0}^2}_{\text{Input effort}}\\
&= \trace\left[Q_{1k} M_{k} + Q_{2{k}} \Sigma_{k} + K_{k}^TR_{k}K_{k}M_{k} + R_{k}\bar{U}_{k} + 2R_{k}\bar{u}_{k}m_{k}^TK_{k}^T\right]\\
\text{and}\quad\terminalcost(\mu_N) 
&= \wassersteinDistance{2{P_{1N}}}{\mu_N}{\delta_0}^2 + \variance{\mu_N}{p_{2N}^Tx}\\
&= \trace[P_{1N}M_N + P_{2N}\Sigma_N],
\end{aligned}
\end{displaymath}
where we replace the inner products with the $\trace$, $M_{k} = m_{k} m_{k}^T$, $\bar{U}_{k} = \bar{u}_{k}\bar{u}_{k}^T$ and $Q_{2k} = Q_{1k} + q_{2k}q_{2k}^T$.
\end{example}
\section{Continuity of the pushforward mapping: counterexamples}
\label{section:pushforward:counterexamples}
We now study the continuity properties of $H$.
\begin{proposition}\label{proposition:pushforward:continuous:partial}
Assume that $h(\cdot, c)$ is continuous for all $c \in C$ and $h(x, \cdot)$ is continuous for all $x \in A_1$. Then, we have that:
\begin{enumerate}
    \item $H(\mu,b_n) \narrowconvergence H(\mu,b)$; and
    \item $H(\mu_n, b) \narrowconvergence H(\mu,b)$.
\end{enumerate}
\end{proposition}
\begin{proof}
\begin{enumerate}
    \item 
    For all $\phi \in \Cb{A_1}$
\begin{equation*}
\begin{aligned}
    \lim_{n\to\infty} \int_{A_1}\phi(x)\d H(\mu,b_n)(x) &= \lim_{n\to\infty} \int_{A_1}\phi(h(x, b_n(x)))\d\mu(x)\\
    &\overset{\heartsuit}{=}\int_{A_1}\lim_{n\to\infty}\phi(h(x, b_n(x)))\d\mu(x)\\
    &\overset{\diamondsuit}{=}\int_{A_1}\phi(h(x, b(x)))\d\mu(x)\\
    &=\int_{A_1}\phi(x)\d H(\mu,b)(x),
\end{aligned}
\end{equation*}
where in:
\begin{itemize}
    \item[$\heartsuit$] we used $\phi \in \Cb{A_1}$, $h(\cdot, b_n(\cdot)) \eqqcolon r_n$ and $r := h(\cdot, b(\cdot))$ are continuous and thus, $\comp{r_n}{\phi} \in \Cb{A_1}$, $(\comp{r_n}{\phi})(x) \to (\comp{r}{\phi})(x)$. Then, we used the dominated convergence result~\cite[Theorem 11.21]{Aliprantis2006}; and
    \item[$\diamondsuit$] we observed $\phi \in \Cb{A_1}$. Hence, 
    \begin{displaymath}
    b_n\to b \Rightarrow h(x, b_n(x)) \to h(x, b(x)) \Rightarrow \phi(h(x,b_n(x))) \to \phi(h(x,b(x))).
    \end{displaymath}
\end{itemize}
    \item
Similarly, since $\phi(h(\cdot, b(\cdot)))\in \Cb{A_1}$ for all $b \in B$, by narrow convergence of $\mu_n$ we have
\begin{equation*}
\begin{aligned}[b]
    \lim_{n\to\infty} \int_{A_1}\phi(x)\d H(\mu_n,b)(x) &= \lim_{n\to\infty} \int_{A_1}\phi(h(x, b(x)))\d\mu_n(x)\\
    &=\int_{A_1}\phi(h(x, b(x)))\d\mu(x)\\
    &=\int_{A_1}\phi(x)\d H(\mu,b)(x).
\end{aligned}
\end{equation*}
\end{enumerate}
\end{proof}

\Cref{proposition:pushforward:continuous:partial} proves that the pushforward mapping $H$ with a \emph{fixed} argument is continuous in the other. However, in general it is not continuous in both:
\begin{example}\label{example:pushforward:notcontinuous}
Consider $h: \reals \times \reals \to \reals$ defined as:
\begin{displaymath}
h(x, c) = 
\begin{cases}
\frac{xc}{x^2 + c^2}\quad&\text{if } (x, c) \neq (0,0),\\
0&\text{otherwise}.
\end{cases}
\end{displaymath}

Clearly, $h(\cdot, c)$ and $h(x, \cdot)$ are continuous for all $x, c \in \reals$. However, $h$ is not continuous with respect to the product topology:
\begin{displaymath}
\lim_{n \to \infty} h\left(\frac{1}{n}, \frac{1}{n}\right) = \frac{1}{2} \neq 0 = h(0,0).
\end{displaymath}

We now show that there exists two converging sequences
\begin{displaymath}
\begin{aligned}
&(\mu_n)_{n \in \naturals} \subset \spaceProbabilityBorelMeasures{\reals}, \mu_n \narrowconvergence \mu \in \spaceProbabilityBorelMeasures{\reals}\\
&(b_n)_{n \in \naturals} \subset B, b_n(\cdot) = c_n \in \reals, b_n \to b(\cdot) = c \in \reals
\end{aligned}
\end{displaymath}
such that $\pushforward{h(\cdot, c_n)}{\mu_n} \not\narrowconvergence \pushforward{h(\cdot, c)}{\mu}$.
Consider for instance $c_n = 1/n \to 0$ and $\mu_n = \delta_{1/n} \narrowconvergence \delta_0$. We have that:

\begin{displaymath}
\pushforward{h(\cdot, c_n)}{\mu_n} = \pushforward{\left(\frac{x/n}{x^2 + (1/n)^2}\right)}{\delta_{1/n}} = \delta_{1/2} \neq \delta_0 = \pushforward{0}{\delta_0} = \pushforward{h(\cdot, 0)}{\delta_0}.
\end{displaymath}
\end{example}

\section{Simplified proof of~\cref{theorem:pushforward:continuity}}
\label{proof:composition:continuity:alternative}
\begin{proof}[Proof of~\cref{theorem:pushforward:continuity}]
Fix a compact $K \subseteq A_1$.
Since $C$ is metric, we can consider the product metric space endowed with the distance $d_{A_1\times C}$; $\restrictDomain{h}{K\times C}$ is uniformly continuous, because continuous on a compact. Since $b_n(x) \to b(x)$, we have that:
\begin{equation*}
\begin{aligned}
\forall \varepsilon > 0\, \exists \delta>0
\text{ s.t. } &\delta > d_{A_1\times C}((x,b(x)),(x, b_n(x))) = d_{C}(b(x),b_n(x)) \\
&\Rightarrow d_{A_2}(h(x,b(x)),h(x,b_n(x))) < \varepsilon.
\end{aligned}
\end{equation*}
From the convergence of $b_n \to b \in B$ we conclude the existence of $N$ such that 
\begin{displaymath}
\forall n > N,\,d_{A_2}(h(x,b(x)),h(x,b_n(x))) < \varepsilon.
\end{displaymath}
Defining $r_n(x)\coloneqq h(x,b_n(x))$ and $r(x)\coloneqq h(x,b(x))$ we have a uniformly convergent sequence of functions converging to a continuous function on $K$. Since $K$ is arbitrary, we have local uniform convergence. 
Hence, we can apply~\cite[Lemma 5.2.1]{Ambrosio2008} to establish the desired result.
\end{proof}
\section{Proof of \cref{theorem:dp:existence}}
\label{proof:dp}
First, we need the following lemma, which proves the existence of a minimizer at every stage:
\begin{lemma}[Existence of the minimizer in~\cref{definition:dynamicprogrammingequation}]\label{lemma:dp:existence}
Assume $\costtogo{k+1}$ satisfies~\cref{hypothesis:terminalcost}. Then, the dynamic programming algorithm in~\cref{definition:dynamicprogrammingequation} admits a minimizer.
\end{lemma}
The proof is standard and it is along the lines of \cite{Haneveld1980}, we write it for completeness:
\begin{proof}
\label{proof:dp:existence:stage}
\begin{enumerate}
    \item 
Fix $(w_k,\nu_k) \in \functorprobabilitynoise{\noisespace{k}}$. Then, using~\cref{hypothesis:statespace,hypothesis:inputspace,hypothesis:dynamics}, since $\costtogo{k+1}$ is $\lowersemicontinuous$, we conclude that $\comp{\dynamicsbig{k,w_k,\nu_k}}{\costtogo{k+1}}$ is also $\lowersemicontinuous$, by~\cref{corollary:pushforward:lsc}.
    \item
Because of~\cref{hypothesis:stagecost}-i, $\stagecost{k,w_k}$ is $\lowersemicontinuous$. Then, $\stagecost{k, w_k} + \comp{\dynamicsbig{k,w_k,\nu_k}}{\costtogo{k+1}}$ is $\lowersemicontinuous$~\cite[Proposition B.1]{Puterman2014}. Hence, for any fixed $u_k \in \functorprobabilityinput{\inputspace{k}}$, $\comp{\dynamicsbig{k,\mu_k,u_k,\nu_k}}{\costtogo{k+1}}$, $\stagecost{k,\mu_k,u_k}$ and their sum $\stagecost{k,\mu_k,u_k} + \comp{\dynamicsbig{k,\mu_k,u_k,\nu_k}}{\costtogo{k+1}}$ are Borel-measurable.%~\cite[Theorem 4.27]{Aliprantis2006}.
\item Let $(u_{k,n})_{n \in \naturals}\subset \functorprobabilityinput{\inputspace{k}}$ be a converging sequence to $u_k$; i.e., $u_{k,n} \to u_{k}$. It holds that:
    \begin{displaymath}
    \begin{aligned}
        \heartsuit &\coloneqq  \costfunctional{\noisespace{k,g}}{\stagecost{k,\mu_k}(u_k, w_k)+(\comp{\dynamicsbig{k,\mu,\nu_k}}{\costtogo{k+1}})(u_k, w_k)}{\xi_k}{w_k}\\
        \overset{\text{(ii)}}&{\leq}
        \costfunctional{\noisespace{k,g}}{\liminf_{n \to \infty}\stagecost{k,\mu_k}(u_{k,n},w_k)+(\comp{\dynamicsbig{k,\mu_k,\nu_k}}{\costtogo{k+1}})(u_{k,n},w_k)}{\xi_k}{w_k}\\
        \overset{\diamondsuit}&{\leq}
        \liminf_{n \to \infty}\costfunctional{\noisespace{k,g}}{\stagecost{k,\mu_k}(u_{k,n},w_k)+(\comp{\dynamicsbig{k,\mu_k,\nu_k}}{\costtogo{k+1}})(u_{k,n},w_k)}{\xi_k}{w_k},
    \end{aligned}
    \end{displaymath}
    where in $\diamondsuit$ we used Fatou's Lemma~\cite[Lemma 11.20]{Aliprantis2006}, in view of (c). We thus proved that $\heartsuit$ is $\lowersemicontinuous$ with respect to $u$. In view of~\cref{remark:coercitive:restriction}, we can restrict $\functorprobabilityinput{\inputspace{k}}$ to some compact subset $\widetilde{\functorprobabilityinput{\inputspace{k}}}$ keeping the same optimal value (otherwise any $u_k$ yields an infinite cost). Then, we can conclude that there exists a minimizer $u_k^\ast$ invoking~\cite[Theorem B.2]{Puterman2014}. 
    \end{enumerate}
\end{proof}

We are now ready to prove the extension of the results in \cite{Bertsekas2014,Haneveld1980} to our setting:
\begin{proof}[Proof of \cref{theorem:dp:existence}]
We prove the result in two steps:
\begin{enumerate}[label=\arabic*.]
    \item We prove that the recursion in~\cref{definition:dynamicprogrammingequation} is well posed; in particular, $\costtogo{k}$ recursively satisfies~\cref{hypothesis:terminalcost}; and
    \item we prove that solving the recursion in~\cref{definition:dynamicprogrammingequation} solves~\cref{problem:finitehorizon} as well.
\end{enumerate}
Clearly, the two together prove the statement.
\begin{enumerate}[label=\arabic*.]
    \item 
We proceed inductively to show that $\costtogo{k}$ satisfies~\cref{hypothesis:terminalcost}. The case $k = N$ is automatically satisfied. 

Let $(\mu_{k,n})_{n \in \naturals}\subset \functorprobabilitystate{\statespace{k}}$ be a (narrowly) converging sequence to $\mu_k$; i.e., $\mu_{k,n} \narrowconvergence \mu_k$. 
In view of \cref{lemma:dp:existence}, there exists a sequence $(u_{k,n}^\ast)_{n\in \naturals} \subset \probabilityInputSpace{k}$ of minimizers for each $\mu_{k,n}$, respectively, and $u_k^\ast \in \probabilityInputSpace{k}$ minimizer for $\mu_k$.

Define $L \coloneqq \liminf_{n \to \infty} \costtogo{k}(\mu_{k,n}) \in \nonnegativeRealsBar$ and $h_k: \functorprobabilitystate{\statespace{k}}\times \probabilityInputSpace{k} \to \nonnegativeRealsBar$ as
\begin{displaymath}
h_k(\bar{\mu},\bar{u}) = \costfunctional{\noisespace{k,g}}{\stagecost{k,\nu_k}(\bar{\mu},\bar{u},w_k)+(\comp{\dynamicsbig{k,\nu_k}}{\costtogo{k+1}})(\bar{\mu},\bar{u},w_k)}{\xi_k}{w_k}.
\end{displaymath}

For any converging sequence $(\gamma_n, a_n) \subset \functorprobabilitystate{\statespace{k}}\times\probabilityInputSpace{k}$, $\gamma_n \narrowconvergence \gamma$ and $a_n \to a$, it holds:
\begin{multline*}
\costfunctional{\noisespace{k,g}}{\stagecost{k}(\gamma,a,w_k)+(\comp{\dynamicsbig{k,\nu_k}}{\costtogo{k+1}})(\gamma,a,w_k)}{\xi_k}{w_k}\\
\leq
\costfunctional{\noisespace{k,g}}{\liminf_{n \to \infty}
\stagecost{k,\nu_k}(\gamma_n,a_n,w_k)+(\comp{\dynamicsbig{k,\nu_k}}{\costtogo{k+1}})(\gamma_n,a_n,w_k)}{\xi_k}{w_k}\\
\overset{\diamondsuit}{\leq}
\liminf_{n \to \infty}\costfunctional{\noisespace{k,g}}{\stagecost{k,\nu_k}(\gamma_n,a_n,w_k)+(\comp{\dynamicsbig{k,\nu_k}}{\costtogo{k+1}})(\gamma_n,a_n,w_k)}{\xi_k}{w_k},
\end{multline*}
where in $\diamondsuit$ we used~\cite[Lemma 11.20 (Fatou's Lemma)]{Aliprantis2006}. That is, $h_k$ is $\lowersemicontinuous$: $h_k(\gamma,a)\leq\liminf_{n \to \infty}h_k(\gamma_n,a_n)$.

If $L = \infty$, then trivially $\liminf_{n \to \infty} \costtogo{k}(\mu_{k,n}) \geq \costtogo{k}(\mu_k)$. Hence, let $L < \infty$.

Since $h_k$ is $\lowersemicontinuous$, for all $m \in \naturals$ there exists $n_m \in \naturals$ such that $n_{m+1} > n_m$ and
\begin{displaymath}
h_k(\mu_{k,n_m}, u_{k,n_m}^\ast) \geq h(\mu_k, u_{k,n_m}^\ast) - \frac{1}{m}.
\end{displaymath}

Therefore, $\liminf_{m \to \infty}h_k(\mu_k, u_{k,n_m}^\ast) \leq L$ and thus, from \cref{hypothesis:stagecost}, there exists $\Lambda \subseteq \naturals$, $K \subseteq \probabilityInputSpace{k}$ compact, such that $h_k(\mu_k, u) \leq L$ for all $u \in K$ and for all $m \in \Lambda$ $u_{n_m}^\ast \in K$. In particular, there exists a converging subsequence of $(u_n^\ast)_{n \in \naturals}$, indexed by $\Lambda' \subseteq \naturals$. Let $\lim_{n \in \Lambda'} u_n^\ast = \bar{u}$. We have
\begin{displaymath}
\begin{aligned}
\liminf_{n \to \infty}\costtogo{k}(\mu_{k,n}) 
&\geq 
\liminf_{n \in \Lambda'} h(\mu_{k,n}, u_{k,n}^\ast)\\
&= 
h(\mu_k, \bar{u})
\geq 
\min_{u \in \probabilityInputSpace{k}} h_k(\mu_k, u)
= 
\costtogo{k}(\mu_k).
\end{aligned}
\end{displaymath}
That is, $\costtogo{k}$ is $\lowersemicontinuous$.
    \item
Follows from a simple inductive argument along the lines of \cite[\S 4.2 Finite horizon problems]{Bertsekas2014} and \cite{Haneveld1980}: 
We introduce the collection of auxiliary maps $\heartsuit_k$, defined as $\heartsuit_N = \terminalcost$ and
\begin{displaymath}
\heartsuit_{k}(\mu_k) \coloneqq \inf_{\indexedVar{u}{k}{N-1} \in \indexedVar{\functorprobabilityinput{\inputspace{}}}{k}{N-1}} \costfunctional{\indexedVar{\noisespace{}}{k,g}{N-1}}{\terminalcost(\mu_N) + \sum_{h = k}^{N-1}\stagecost{h}(\mu_h, u_h, w_h)}{\indexedVar{\xi}{k}{N-1}}{\indexedVar{w}{k}{N-1}},
\end{displaymath}
subject to the dynamic and space constraints. Notice that $\heartsuit_0 = \costtogo{}$. With the induction hypothesis $\heartsuit_{k+1} = \costtogo{k+1}$, we have that:
\begin{displaymath}
\begin{aligned}
    \heartsuit_k(\mu_k) 
    &= \inf_{\indexedVar{u}{k}{N-1} \in \indexedVar{\functorprobabilityinput{\inputspace{}}}{k}{N-1}} \costfunctional{\noisespace{k,g}}{
            \costfunctional{\indexedVar{\noisespace{}}{k+1,g}{N-1}}{
                \terminalcost(\mu_N) \\
    &\qquad\qquad\qquad+ \sum_{h = k}^{N-1}\stagecost{h}(\mu_h, u_h, w_h)
            }{\indexedVar{\xi}{k+1}{N-1}}{\indexedVar{w}{k+1}{N-1}}}{\xi_{k}}{w_{k}}\\
    &= \inf_{\indexedVar{u}{k}{N-1} \in \indexedVar{\functorprobabilityinput{\inputspace{}}}{k}{N-1}} \costfunctional{\noisespace{k,g}}{
            \stagecost{k}(\mu_k, u_k, w_k) \\&\qquad+ \costfunctional{\indexedVar{\noisespace{}}{k+1,g}{N-1}}{
                \terminalcost(\mu_N) \\
    &\qquad\qquad\qquad+ \sum_{h = k+1}^{N-1}\stagecost{h}(\mu_h, u_h, w_h)
            }{\indexedVar{\xi}{k+1}{N-1}}{\indexedVar{w}{k+1}{N-1}}}{\xi_{k}}{w_{k}}\\
    &=\inf_{u_{k} \in \functorprobabilityinput{\inputspace{k}}} \costfunctional{\noisespace{k,g}}{
            \stagecost{k}(\mu_k, u_k, w_k) \\
    &\qquad+ \inf_{\indexedVar{u}{k+1}{N-1} \in \indexedVar{\functorprobabilityinput{\inputspace{}}}{k+1}{N-1}} \costfunctional{\indexedVar{\noisespace{}}{k+1,g}{N-1}}{
                \terminalcost(\mu_N) \\
    &\qquad\qquad+ \sum_{h = k+1}^{N-1}\stagecost{h}(\mu_h, u_h, w_h)
            }{\indexedVar{\xi}{k+1}{N-1}}{\indexedVar{w}{k+1}{N-1}}}{\xi_{k}}{w_{k}}\\
    &=\inf_{u_{k} \in \inputspace{k}} \costfunctional{\noisespace{k,g}}{
            \stagecost{k}(\mu_k, u_k, w_k) + \heartsuit_{k+1}(\mu_{k+1})}{\xi_{k}}{w_{k}}\\
    &=\inf_{u_{k} \in \inputspace{k}} \costfunctional{\noisespace{k,g}}{
            \stagecost{k}(\mu_k, u_k, w_k) + \costtogo{k+1}(\mu_{k+1})}{\xi_{k}}{w_{k}}\\
    &=\costtogo{k}(\mu_k).
\end{aligned}
\end{displaymath}
Hence, proceeding backwards, we find $\costtogo{} = \heartsuit_0 = \costtogo{0}$. Therefore, the solutions of~\cref{definition:dynamicprogrammingequation} coincides with the ones of~\cref{problem:finitehorizon}, and we conclude the proof. 
\end{enumerate}
\end{proof}

\section{Derivations for examples in \cref{section:dp:examples}}

\begin{proof}[Detailed steps for \cref{example:linearsystems:lqr}]
\label{proof:derivation:lqr}
We proceed by induction and we assume that the cost to go has the expression:
\[
\costtogo{k+1}(\mu_{k+1}) = \wassersteinDistance{2{P_{k+1}}}{\mu_{k+1}}{\delta_0}^2.
\]

Then, at the $\nth{k}$ stage, the cost to go has the expression:
\[
\begin{aligned}
\costtogo{k}(\mu_{k}) 
&= \min_{K_{k}, \bar{u}_{k}}\wassersteinDistance{2{Q_{k}}}{\mu_{k}}{\delta_0}^2 
+ \wassersteinDistance{2{R_{k}}}{\pushforward{(K_{k}\cdot + \bar{u}_{k})}{\mu_{k}}}{\delta_0}^2
+ \wassersteinDistance{2{P_{k+1}}}{\mu_{k+1}}{\delta_0}^2\\
&= \min_{K_{k}, \bar{u}_{k}}\norm{m_{k}}_{Q_{k}}^2 + \norm{K_{k}m_{k} + \bar{u}_{k}}_{R_{k}}^2 + \norm{(A_{k}+B_{k}K_{k})m_{k} + B_{k}\bar{u}_{k}}_{P_{k+1}}^2 \\
& + \trace\left[Q_{k}\Sigma_{k} + K_{k}^TR_{k}K_{k}\Sigma_{k} + P_{k+1}(A_{k}+B_{k}K_{k})\Sigma(A_{k}+B_{k}K_{k})^T\right].
\end{aligned}
\]

Differentiating with respect to $K_{k}$ and $\bar{u}_{k}$ we obtain (see \cite{Pedersen2008})
\[
\begin{aligned}
0 \overset{!}{=} \pdv{\costtogo{k}(\mu_{k})}{\bar{u}_{k}} 
&= 2R_{k}(K_{k}m_{k} + \bar{u}_{k}) + 2B_{k}^TP_{k+1}((A_{k}+B_{k}K_{k})m_{k} + B_{k}\bar{u}_{k})\\
%
0 \overset{!}{=} \pdv{\costtogo{k}(\mu_{k})}{K_{k}} 
&= 2R_{k}K_{k}\Sigma_{k} + 2B_{k}^TP_{k+1}B_{k}K_{k}\Sigma_{k}\\
&+ 2B_{k}^TP_{k+1}A_{k}\Sigma_{k} + \cancel{\pdv{\costtogo{k}(\mu_{k})}{\bar{u}_{k}}}m_{k}^T
\end{aligned}
\]
and thus:
\[
\begin{aligned}
\bar{u}_{k}^\ast &= 0,\\
K_{k}^\ast &= -(R_{k} + B_{k}^TP_{k+1}B_{k})B_{k}^TP_{k+1}A_{k},
\end{aligned}
\]
recovering the traditional \gls*{acr:lqr} feedback law. Substituting the optimal values in the cost to go, we complete the induction:
\[
\begin{aligned}
\costtogo{k}(\mu_{k}) 
&= m_{k}^T(Q_{k} + {K_{k}^\ast}^TR_{k}K_{k}^\ast + (A_{k}+B_{k}K_{k}^\ast)^TP_{k+1}(A_{k}+B_{k}K_{k}^\ast))m_{k} \\
&+ \trace\left[(Q_{k} + {K_{k}^\ast}^TR_{k}K_{k}^\ast + (A_{k}+B_{k}K_{k}^\ast)^TP_{k+1}(A_{k}+B_{k}K_{k}^\ast))\Sigma_{k}\right]\\
&= m_{k}^TP_{k}m_{k} + \trace(P_{k}\Sigma_{k}) = \wassersteinDistance{2P_{k}}{\mu}{\delta_0}^2,
\end{aligned}
\]
where $P_{k}$ is the solution of the traditional \gls*{acr:dare}:
\[
P_{k}=Q_{k} + A_{k}^TP_{k}A_{k}-(A_{k}^TP_{k+1}B_{k})(R_{k}+B_{k}^TP_{k+1}B_{k})^{-1}(B_{k}^TP_{k+1}A_{k}).
\]
The cost to go has thus the form 
\[
\costtogo{k}(\mu_{k}) = \wassersteinDistance{2{P_{k}}}{\mu_{k}}{\delta_0}^2.
\]
\end{proof}

\begin{proof}[Detailed steps for \cref{example:linearsystems:varianceawarelqr}]
\label{proof:derivation:lqr:varianceaware}
We proceed by induction with the ansatz:
\[
\costtogo{k+1}(\mu_{k+1}) = \trace[P_{1k+1}M_{k+1} + P_{2k+1}\Sigma_{k+1}].
\]
Then, the cost to go at the $\nth{k}$ stage is:
\[
\begin{aligned}
\costtogo{k}(\mu_{k}) 
= \trace\biggl[&Q_{1k} M_{k} + Q_{2k} \Sigma_{k} + K_{k}^TR_{k}K_{k}M_{k} + R_{k}\bar{U}_{k} + R_{k}\bar{u}_{k}m_{k}^TK_{k}^T\\
&+ P_{1k+1}(A_{k}+B_{k}K_{k})M_{k}(A_{k}+B_{k}K_{k})^T + P_{1k+1}B_{k}\bar{U}_{k}B_{k}^T \\
&+ 2 P_{1k+1}(A_{k}+B_{k}K_{k})m_{k}\bar{u}_{k}B_{k}^T \\
&+ P_{2k+1}(A_{k}+B_{k}K_{k})\Sigma_{k}(A_{k}+B_{k}K_{k})^T\biggr].
\end{aligned}
\]

Differentiating with respect to $\bar{u}_{k}$ \cite{Pedersen2008}:
\[
0 \overset{!}{=} \pdv{\costtogo{k}(\mu_k)}{\bar{u}_k} = R_{k}K_{k}\bar{u}_{k} + R_{k}K_{k}m_{k} + B_{k}^TP_{1k+1}B_{k}\bar{u}_{k} + B_{k}^TP_{1k+1}(A_{k}+B_{k}K_{k})m_{k},
\]
which gives 
\[
\begin{aligned}
\bar{u}_{k}^\ast 
&= -(R_{k}+B_{k}^TP_{1k+1}B_{k})^{-1}((R_{k} + B_{k}^TP_{1k+1}B_{k})K_{k}m_{k} + B_{k}^TP_{1k+1}A_{k}m_{k}) \\
&= -K_{k}m_{k} + K_{1k}^\ast,
\end{aligned}
\]

where $K_{1k}^\ast$ is the classic \gls*{acr:lqr} with matrices $A_{k}, B_{k}, R_{k}, P_{1k+1}$. Namely, we obtain a control law of the form:
\[
u^\ast = K_{k}(x - m_{k}) + K_{1k}^\ast m_{k}.
\]

Differentiating now with respect to $K_{k}$:
\[
\begin{aligned}
0 \overset{!}{=} \pdv{\costtogo{k}(\mu_k)}{K_k} 
&= R_{k}K_{k}^\ast(M_{k}+\Sigma_{k}) + R_{k}\bar{u}_{k}m_{k}^T \\
&+ B_{k}^TP_{1k+1}B_{k}K_{k}^\ast M_{k} + B_{k}^TP_{1k+1}A_{k}M_{k} + B_{k}^TP_{1k+1}B_{k}\bar{u}_{k}m_{k}^T \\
&+ B_{k}^TP_{2k+1}B_{k}K_{k}^\ast\Sigma_{k} + B_{k}^TP_{2k+1}A_{k}\Sigma_{k},
\end{aligned}
\]
and substituting $\bar{u}_{k}^\ast$ we obtain:
\[
0 = (R_{k} + B_{k}^TP_{2k+1}B_{k})K_{k}^\ast\Sigma_{k} + B_{k}^TP_{2k+1}A_{k}\Sigma_{k},
\]
which gives $K_{k}^\ast = K_{2k}^\ast$, \gls*{acr:lqr} solution for $A_{k},B_{k},R_{k},P_{2k+1}$.

With these expression, the input effort becomes:

\begin{displaymath}
\begin{aligned}
\int_{\statespace{k}} \norm{K_{2k}^\ast(x-m_{k}) + K_{1k}^\ast m_{k}}^2_{R_{k}} &\d\mu_{k}(x)
\\
= \trace\biggl[&
{K_{2k}^\ast}^TR_{k}K_{2k}^\ast\Sigma_{k} + {K_{1k}^\ast}^TR_{k}K_{1k}^\ast M_{k}\\
&+ \cancel{2\int_{\statespace{k}}(x-m_{k})^T{K_{2k}^\ast}^TR_{k}K_{1k}^\ast m_{k}\d\mu_{k}(x)}
\biggr].
\end{aligned}
\end{displaymath}
The expressions in $\Sigma_{k}$ and $M_{k}$ are indeed decoupled, and so are the next mean and variance: 
\[
\begin{aligned}
m_{k+1} &= A_{k}m_{k} + B_{k}K_{2k}^\ast m_{k} - B_{k}K_{2k}^\ast m_{k} - B_{k}K_{1k}^\ast m_{k} = (A_{k}+B_{k}K_{1k}^\ast)m_{k},\\
\Sigma_{k+1} &= (A_{k}+B_{k}K_{2k}^\ast)\Sigma_{k}(A_{k}+B_{k}K_{2k}^\ast)^T.
\end{aligned}
\]
Finally, collecting the terms in $M$ and $\Sigma$ we obtain the desired recursion:
\[\belowdisplayskip=-12pt
\begin{aligned}
P_{1k} &= Q_{1k} + A_{k}^TP_{1k+1}A_{k} - A_{k}^TP_{1k+1}B_{k}(R_{k}+B_{k}^TP_{1k+1}B_{k})^{-1}B_{k}^TP_{1k+1}A_{k},\\
P_{2k} &= Q_{2k} + A_{k}^TP_{2k+1}A_{k} - A_{k}^TP_{2k+1}B_{k}(R_{k}+B_{k}^TP_{2k+1}B_{k})^{-1}B_{k}^TP_{2k+1}A_{k}.
\end{aligned}
\]
\end{proof}
\section{Derivation of \cref{example:linearsystems:finitehorizon:lqr:generic}}
\begin{proof}
\label{proof:lifting:linearsystems:inputeffort:detailed}
We now recove the analytical expression for $\costtogosmall{0}(x,y)$, which is a classical \gls*{acr:lqr} with $Q_k = 0$ at every stage. Dropping the stage indeces for clarity, and referring to the quantities at the next stage with $(\cdot)^+$, the base for the induction is:
\begin{displaymath}
\costtogosmall{}^+(x,y) = x^TP_x^+x + y^TP_y^+y + 2y^TP_{xy}^+x,
\end{displaymath}
which is satisfied by the last stage with $P_{Nx} = P_{Nx}^T = P_{Ny} = -P_{Nxy} = P_N$.
Then, proceeding by induction, we have
\begin{displaymath}
\begin{aligned}
\costtogosmall{}(x,y) 
= \min_{u \in \inputspace{}}\,
& \norm{u}_{R}^2 + \norm{A x + B u}_{P_x^+}^2 + \norm{y}_{P_y^+}^2 + 2(Ax+Bu)^TP_{xy}^+y.
\end{aligned}
\end{displaymath}
Hence,
\begin{displaymath}
\begin{aligned}
u(x,y) 
&= -(R + B^TP_{x}^+B)^{-1} B^T\left[P_x^+Ax + P_{xy}^+y\right]\\
&= -K\left[P_x^+Ax + P_{xy}^+y\right]\\
&= -K_x x -K_y y.
\end{aligned}
\end{displaymath}
Substituting the optimal control law in the cost to go we obtain the cost terms
\begin{displaymath}
\begin{aligned}
P_x 
&= A^TP_x^+A - A^TP_xBK_x = P_x^T,\\
P_y
&= K_y^T(R + B^TP_x^+B)K_y + P_y^+ - 2(BK_y)^TP_{xy}^+,\\
&= P_y^+ - (BK_y)^TP_{xy}^+ = P_y^T,\\
P_{xy}
&= {K_x}^TRK_y + (A-BK_x)^T(-P_x^+BK_y+P_{xy}^+)\\
\overset{\heartsuit}&{=}(A-BK_x)^TP_{xy}^+,
\end{aligned}
\end{displaymath}
where in $\heartsuit$ we used
\begin{displaymath}
-A^TP_x^+BK_y + {K_x}^T(R + B^TP_x^+B)K_y = -A^TP_x^+BK_y + A^TP_x^+BK_y = 0.
\end{displaymath}
We conclude the recursion noticing that
\begin{displaymath}
\costtogosmall{}(x,y) = x^TP_xx + y^TP_yy + 2y^TP_{xy}x.
\end{displaymath}
\end{proof}

\section{Solution lifting: pitfalls and hopes}\label{section:lifting:noise}
\subsection{Global cost terms cannot be solved via lifting}
Suboptimal discussions and exampless....

\begin{theorem}
\label{theorem:lifting:optimaltransport:cherrypick}
Define $\widetilde{\costtogo{}}: \functorprobabilitystate{\statespace{}}\to\nonnegativeRealsBar$ and $\widetilde{\costtogo{}}^+:\functorprobabilitystate{\statespace{}^+}\to\nonnegativeRealsBar$ as:
\begin{displaymath}
\begin{aligned}
\widetilde{\costtogo{}}(\mu) 
&= \min_{\plan{\gamma} \in \setPlans{\mu}{\mu^\ast}} \int_{\statespace{}\times\statespace{N}} \costtogosmall{}(x,y)\d\plan{\gamma}(x,y)
\quad\text{and}\\ 
\widetilde{\costtogo{}}^+(\mu) 
&= \min_{\plan{\gamma} \in \setPlans{\mu}{\mu^\ast}} \int_{\statespace{}\times\statespace{N}} \costtogosmall{}^+(x,y)\d\plan{\gamma}(x,y).
\end{aligned}
\end{displaymath}
If $\costtogo{}^+ = \widetilde{\costtogo{}}^+$, then also $\costtogo{} = \widetilde{\costtogo{}}$. Moreover, there exists an optimal transport map $T^\ast$ such that
\begin{displaymath}
\costtogo{}(\mu) = \int_{\statespace{}}\costtogosmall{}(x,T^\ast(x))\d\mu(x)
\end{displaymath}
and an optimal control input $u^* \in \probabilityInputSpace{}$ is defined as $u(x) = u_{xT^\ast(x)}$ $\mu\ae{}$, where $u_{xy}$ yields $\costtogosmall{}(x,y)$.
\end{theorem}
\begin{proof}
Let $T,T^+$ be the optimal transport maps for $\stagecost{}$ and $\costtogo{}^+$, respectively:
\begin{displaymath}
\begin{aligned}
\stagecost{}(\mu,u) 
&= \int_{\statespace{}} \int_{\statespace{}^+}\stagecostsmall{}(x,T(x),u(x),w_l)\d\nu(w_l)\d\mu(x)
\quad\text{and}\\
\costtogo{}^+(\mu^+) 
&= \int_{\statespace{}^+} \costtogosmall{}^+(y,T^+(y))\d\mu(y).
\end{aligned}
\end{displaymath}
It holds that:
\begin{displaymath}
\begin{aligned}
\costtogo{}(\mu) 
&= \min_{u \in \probabilityInputSpace{}}
\stagecost{}(\mu, u) 
+ \alpha\costtogo{}^+(\dynamicsbig{\nu}(\mu, u))\\
&= \min_{u \in \probabilityInputSpace{}}
\int_{\statespace{}} \int_{\statespace{}^+}\stagecostsmall{}(x,T(x),u(x),w_l)\d\nu(w_l)\d\mu(x)\\
&\qquad\qquad+ \alpha 
\int_{\statespace{}^+} \costtogosmall{}^+(y,T^+(y))\d\dynamicsbig{\nu}(\mu,u)(y)\\
\overset{\heartsuit}&{=} 
\min_{u \in \probabilityInputSpace{}}
\int_{\statespace{}} \int_{\statespace{}^+}\stagecostsmall{}(x,T(x),u(x),w_l)\d\nu(w_l)\d\mu(x)\\
&\qquad\qquad + \alpha 
\int_{\statespace{}^+}\int_{\statespace{}^+} \costtogosmall{}^+(z+w_l,T^+(z+w_l))\d\pushforward{\dynamicsnonlinear{}(\cdot,u(\cdot),w_l)}{\mu}(z)\d\nu(w_l)\\
\overset{\clubsuit}&{=} 
\min_{u \in \probabilityInputSpace{}}
\int_{\statespace{}} \int_{\statespace{}^+}\stagecostsmall{}(x,T(x),u(x),w_l)\d\nu(w_l)\d\mu(x)\\
&\qquad\qquad + \alpha 
\int_{\statespace{}^+}\int_{\statespace{}} \costtogosmall{}^+(\dynamics{}(x,u(x),w_l),T^+(\dynamics{}(x,u(x),w_l)))\d\mu(x)\d\nu(w_l)\\
\overset{\diamondsuit}&{=} 
\min_{u \in \probabilityInputSpace{}}
\int_{\statespace{}} \int_{\statespace{}^+}
\stagecostsmall{}(x,T(x),u(x),w_l) \\
&\qquad\qquad+ \alpha \costtogosmall{}^+(\dynamics{}(x,u(x),w_l),T^+(\dynamics{}(x,u(x),w_l)))\d\nu(w_l)\d\mu(x)\\
\overset{\spadesuit}&{=} 
\min_{u \in \probabilityInputSpace{}}
\int_{\statespace{}} \int_{\statespace{}^+}
\stagecostsmall{}(x,T(x),u(x),w_l) \\
&\qquad\qquad+ \alpha \costtogosmall{}^+(\dynamics{}(x,u(x),w_l),T(x))\d\nu(w_l)\d\mu(x)\\
\overset{\triangle}&{=} 
\int_{\statespace{}} \int_{\statespace{}^+}
\stagecostsmall{}(x,T(x),u^*(x),w_l) + \alpha \costtogosmall{}^+(\dynamics{}(x,u^*(x),w_l),T(x))\d\nu(w_l)\d\mu(x)\\
&= \widetilde{\costtogo{}}(\mu),
\end{aligned}
\end{displaymath}
where in:
\begin{itemize}
    \item[$\heartsuit$] we expanded the convolution in the definition of $\dynamicsbig{}$;
    \item[$\clubsuit$] we used~\cref{theorem:pushforward:optimaltransport};
    \item[$\diamondsuit$] we used Tonelli's theorem ~\cite[Theorem 11.28]{Aliprantis2006};
    \item[$\spadesuit$] we used the Bellman's principle of optimality for the dynamic programming algorithm on the original space; and
    \item[$\triangle$] we used the same step as in $\diamondsuit$ in the proof of \cref{theorem:lifting:optimaltransport}.
\end{itemize}
Finally, using the definition of $\stagecost{}$, it holds $T = T^\ast$.
\end{proof}

\begin{example}
\label{example:mathias:noisy}
First, similarly to \cref{example:linearsystems:finitehorizon:lqr:generic}, we recover the analytical expression for $\costtogosmall{}(x,y)$, which is a classical noisy \gls*{acr:lqr}. Dropping the stage indeces for clarity, and referring to the quantities at the next stage with $(\cdot)^+$, the base for the induction is:
\begin{displaymath}
\costtogosmall{}^+(x,y) = x^TP_x^+x + y^TP_y^+y + 2y^TP_{xy}^+x + x^Tc_x^+ + y^Tc_y^+ + c_w^+,
\end{displaymath}
which is satisfied by the last stage with $P_{Nx} = P_{Nx}^T = P_{Ny} = -P_{Nxy}$ and $c_x = c_y = c_w = 0$.
Then, proceeding by induction, we have
\begin{displaymath}
\begin{aligned}
\costtogosmall{}(x,y) 
= \min_{u \in \inputspace{}}\,
\expectedValue{\nu}{}\biggl[
&\norm{x-y}_{Q}^2 + \norm{u}_{R}^2 + \norm{A x + B u + w}_{P_x^+}^2 + \norm{y}_{P_y^+}^2 \\
&+ 2(Ax+Bu+w)^TP_{xy}^+y +(Ax+Bu+w)^Tc_x^+ + y^Tc_y^+ + c_w^+
\biggr]
\end{aligned}
\end{displaymath}
Manipulating the expression, we obtain:
\begin{displaymath}
\begin{aligned}
\costtogosmall{}(x,y) 
= \min_{u \in \inputspace{}}\,
& \norm{x-y}_{Q}^2 + \norm{u}_{R}^2 + \norm{A x + B u}_{P_x^+}^2 + \norm{y}_{P_y^+}^2 + 2(Ax+Bu)^TP_{xy}^+y \\
&+ y^Tc_y^+ + c_w^+ + (Ax+Bu)^Tc_x^+\\
&+ \expectedValue{\nu}{2(A x + B u)^TP_x^+w + \norm{w}_{P_x^+}^2 + 2w^TP_{xy}^+y + w^Tc_x^+}\\
= \min_{u \in \inputspace{}}\,
&\norm{x-y}_{Q}^2 + \norm{u}_{R}^2 + \norm{A x + B u}_{P_x^+}^2 + \norm{y}_{P_y^+}^2 + 2(Ax+Bu)^TP_{xy}^+y \\
&+ y^Tc_y^+ + c_w^+ + (Ax+Bu+m_\nu)^Tc_x^+ \\
&+ 2(A x + B u)^TP_x^+m_\nu + \trace(P_x^+\Sigma_\nu) + 2m_\nu^TP_{xy}^+y,
\end{aligned}
\end{displaymath}
where $m_\nu$ and $\Sigma_\nu$ are the mean and variance of the probability measure $\nu$ (e.g., $\nu = \gaussian{m_\nu}{\Sigma_\nu}$). Hence,
\begin{displaymath}
\begin{aligned}
u(x,y) 
&= -(R + B^TP_{x}^+B)^{-1} B^T\left[P_x^+(Ax+m_\nu) + P_{xy}^+y + c_x^+\right]\\
&= -K\left[P_x^+Ax + P_{xy}^+y + P_x^+m_\nu + c_x^+\right]\\
&= -K_x x -K_y y - k_w.
\end{aligned}
\end{displaymath}
Substituting the optimal control law in the cost to go we obtain the noise independent cost terms
\begin{displaymath}
\begin{aligned}
P_x 
&= Q + A^TP_x^+A - A^TP_xBK_x = P_x^T,\\
P_y
&= Q + K_y^T(R + B^TP_x^+B)K_y + P_y^+ - 2(BK_y)^TP_{xy}^+,\\
&= Q + P_y^+ - (BK_y)^TP_{xy}^+ = P_y^T,\\
P_{xy}
&= -Q +{K_x}^TRK_y + (A-BK_x)^T(-P_x^+BK_y+P_{xy}^+)\\
\overset{\heartsuit}&{=}-Q + (A-BK_x)^TP_{xy}^+,
\end{aligned}
\end{displaymath}
where in $\heartsuit$ we used
\begin{displaymath}
-A^TP_x^+BK_y + {K_x}^T(R + B^TP_x^+B)K_y = -A^TP_x^+BK_y + A^TP_x^+BK_y = 0,
\end{displaymath}
and the noise dependent ones:
\begin{displaymath}
\begin{aligned}
c_x 
&= (A-BK_x)^T(c_x^++2P_x^+m_\nu),\\
c_y
&= c_y^+-(BK_y)^T(c_x^+ + 2P_x^+m_\nu) + 2P_{xy}^{+T}m_\nu,\\
c_w
&= c_x^+ + (m_\nu-Bk_\nu)^Tc_x^+ - 2Bk_\nu P_x^+m_\nu + \trace(P_x^+\Sigma_\nu).
\end{aligned}
\end{displaymath}
We conclude the recursion noticing that
\begin{displaymath}
\costtogosmall{}(x,y) = x^TP_xx + y^TP_yy + 2y^TP_{xy}x + x^Tc_x + y^Tc_y + c_w.
\end{displaymath}

\begin{remark}
In absence of noise $(\nu = \delta_0)$ and with hard terminal constraint $(P_N^x = P_N^y = -P_N^{xy} \to \infty)$ we recover \cite{HudobadeBadyn2021}.
\end{remark}
\begin{remark}
Implementing the controller one does not need to keep track of $c_y$ and $c_w$, and the optimal transport map can be equivalently computing minimizing $\tilde{\costtogosmall{}}(x,y) = x^TP_{xy}y$, since the other terms can be marginalized.
\end{remark}

An application of \cref{example:linearsystems:finitehorizon:lqr:generic} is shown in \cref{fig:example:lqr:generic:eth:withnoise}, where the setting is analogous to \cref{example:linearsystems:finitehorizon:lqr:generic}. The noise is distributed as $\nu_k = \gaussian{0}{0.1I_2}$ and the $Q_k$ matrices are $Q_k = Q = 0.01I_2$.

% \input{graphics/figures/lqr_eth_suboptimal}
At a macroscopic level, the final distribution computing the optimal transport solution once at the beginning or at every stage would be approximatively the same, due to the locality of the noise; however, the trajectory of the swarm (not of the pre-dispatched single particles) would be worse. For instance, in the provided example, the cumulative costs of all the particles per each stage with feedback improves of about $2\%$ compared to the results without. Of course, the improvement is problem settings dependent and is provided only for explanation purposes; see \cref{example:cornercase:worseningwithoutfeedback} for a corner-case example with infinite worsening of the cost without feedback. Nevertheless, the main take-home message is that the concept of optimality in the probability space is richer than the one on the original space, and that the optimality of each particle trajectory does not imply optimality of the trajectory of the whole swarm.

For reference, in \cref{fig:example:lqr:generic:eth:withoutnoise} we juxtaposed the final configuration when the simulation is run with $\nu = \delta_0$.
\end{example}

\subsection{Noisy dynamics cannot be lifted}
In~\cref{chapter:dp:finitehorizon,chapter:dp:infinitehorizon} we consider a particle dynamics that accounts for both global, possibly non-linear, noise and local, additive, disturbance. In~\cref{chapter:dp:lifting}, instead, we restrict the results to a deterministic pushforward. In this section, we see why the results cannot be extended with the tools and formulation used in this work. Additionally, the counterexamples will offer us a better understanding of the optimal control problem in probability spaces, and point towards possible directions for future work.

The first example shows why, in general, global noise impedes the lifting of the solution from the ground space:
\begin{example}\label{example:lifting:globalnoise}
Let $\statespace{} = \statespace{}^+ = \{0,1\}$, $\inputspace{}=\{0,1\}$ and $\noisespace{} = \{0,1\}$. The dynamics and the stage cost are:
\begin{displaymath}
\dynamics{}(x,u,w) = \begin{cases}
    u\quad&\text{if } w = 0\\
    1-u&\text{otherwise}
\end{cases}\quad\text{and}\quad\stagecostsmall{}(x,u,w) = \begin{cases}
    0\quad&\text{if } x = u, w = 0\\
    1&\text{otherwise}.
\end{cases}
\end{displaymath}
The noise probability measure $\xi$ is defined as $\xi(0) = \xi(1) = 1/2$. An optimal solution is given by $u_0 = 0, u_1 = 1$. When lifting this solution, this leads to a cost of $1$, which is suboptimal compared to the feedback law $u(x) = 0$, which yields $1/2$. 

Although it is true that also $u_0 = u_1 = 1$ or $u_0 = u_1 = 0$ are optimal solutions that when lifted still yield the optimal value, this example shows that in general the lifting does not preserve the optimality, when the noise is not independent between the particles.
\end{example}

The next example shows that even local uncorrelated noise cannot be treated optimally with the tools of this section. Instead, as in the global noise case, one has to employ the results of \cref{chapter:dp:finitehorizon,chapter:dp:infinitehorizon}.

\begin{example}\label{example:lifting:localnoise}
Consider a horizon $N = 2$ and, for all $i \in \naturals_2$, $j \in \naturals_1$, the spaces $\statespace{i} = \inputspace{j} = \noisespace{j} = \naturals_3$ endowed with the bynary operation
\begin{displaymath}
\begin{aligned}
+: \naturals_3\times\naturals_3 &\to \naturals_3\\
(x,y) &\mapsto \modulo{x+y}{3}.
\end{aligned}
\end{displaymath} 

Each particle evolves according to the noisy integrator dynamics $\dynamics{}(x,u_x,w_l) = u_x + w_l$.

The noise measures are
\begin{displaymath}
\nu_0 = \frac{1}{2}\left(\delta_0+\delta_1\right)
\qquad\text{and}\qquad
\nu_1 = \delta_0.
\end{displaymath}

Define the cost terms:
\begin{displaymath}
\begin{aligned}
\stagecostsmall{0}(x,u_x,w_l) &= \begin{cases}
0\quad&\text{if }u_x = x\\
\infty&\text{otherwise},
\end{cases}\qquad\terminalcostsmall(x,y) = \begin{cases}
0\quad&\text{if }y = x\\
\infty&\text{otherwise}
\end{cases}\qquad\text{and}\\
\stagecostsmall{1}(x,u_x,w_l) &= \begin{cases}
\alpha\quad&\text{if } (x \in \{0,1\}, u_x = 0) \text{ or } (x \in \{2,3\}, u_x = 3)\\
\beta\quad&\text{if } (x = 0, u_x = 3) \text{ or } (x = 3, u_x = 0)\\
0\quad&\text{if } (x = 1, u_x = 3) \text{ or } (x = 2, u_x = 0)\\
\infty&\text{otherwise}.
\end{cases}
\end{aligned}
\end{displaymath}
Consider the initial probability measure $\mu_0$ and the target probability measure $\mu^\ast$ defined as
\begin{displaymath}
\mu_0 = \frac{1}{2}\left(\delta_0 + \delta_2\right)
\qquad\text{and}\qquad
\mu^\ast = \frac{1}{2}\left(\delta_0 + \delta_2\right),
\end{displaymath}

and the noise measures:
\begin{displaymath}
\nu_0 = \frac{1}{2}\left(\delta_0 + \delta_1\right)
\qquad\text{and}\qquad
\nu_1 = \delta_0.
\end{displaymath}

The (relevant) particles evolution with the related costs is pictured in \cref{fig:example:local_noise:scheme}; the dashed lines represents transitions that cannot be chosen via the input but may occur due to the noise.

Solving the dynamic programming recursion for a single particle from $x_0 \in \{0,2\}$ to $y \in \{0,2\}$ we obtain:
\begin{displaymath}
\costtogosmall{1}(x,0) = \begin{cases}
\alpha\quad&\text{if } x \in \{0,1\}\\
0\quad&\text{if } x = 2\\
\beta\quad&\text{if } x = 3
\end{cases},\qquad
\costtogosmall{1}(x,3) = \begin{cases}
\beta\quad&\text{if } x = 0\\
0\quad&\text{if } x = 1\\
\alpha\quad&\text{if } x \in \{2,3\}
\end{cases}
\end{displaymath}
and
\begin{displaymath}
\begin{aligned}
\costtogosmall{}(0, y) = \costtogosmall{0}(0, y) = \begin{cases}
\alpha\quad&\text{if } y = 0\\
\frac{\beta}{2}&\text{if } y = 3\\
\infty&\text{otherwise}
\end{cases},\qquad
\costtogosmall{}(2, y) = \costtogosmall{0}(2, y) = \begin{cases}
\frac{\beta}{2}\quad&\text{if } y = 0\\
\alpha&\text{if } y = 3\\
\infty&\text{otherwise}.
\end{cases}
\end{aligned}
\end{displaymath}

Let $\beta > 2\alpha$. Then, lifting the solution we obtain
\begin{displaymath}
\widetilde{\costtogo{}}(\mu_0) = \min_{\plan{\gamma} \in \setPlans{\mu_0}{\mu^\ast}}\int_{\statespace{0}\times\statespace{2}}\costtogosmall{}(x,y)\d\plan{\gamma} = \alpha.
\end{displaymath}
Instead, solving directly the recursion in the probability space, we have \begin{displaymath} 
\mu_1 = \convolution{\mu_0}{\nu_0} = \frac{1}{4}\sum_{k = 0}^{3} \delta_k
\end{displaymath}
and the cost to go is
\begin{displaymath}
\begin{aligned}
\costtogo{}(\mu_0) 
&= \costtogo{0}(\mu_0) = 0 + \costtogo{1}(\mu_1) \\
&= \frac{1}{4}\left(\costtogosmall{1}(0,0) + \costtogosmall{1}(2,0) + \costtogosmall{1}(1,3) + \costtogosmall{1}(3,3)\right) \\
&= \frac{\alpha}{2} < \widetilde{\costtogo{}}(\mu_0).
\end{aligned}
\end{displaymath}

\end{example}

% \input{graphics/figures/local_noise_scheme}

\begin{remark}
\cref{example:lifting:localnoise} is important not only because shows that the claim in \cref{theorem:lifting:optimaltransport} cannot be extended to encompass local noise in the current formulation, but because highlights the relevance of \cref{theorem:lifting:optimaltransport} itself. Indeed, its statement is rather intuitive in hindsight, and would be what one would do performing multi-agent control. However, as remarked by \cref{example:lifting:localnoise}, this would be in general suboptimal. Hence, \cref{theorem:lifting:optimaltransport} empowers us with an optimality guarantee, and justify an approach used in practice that, to the best of our knowledge, was never analyzed in terms of optimality in the space of probability measures.
\end{remark}

Finally, we show where the math stops us from extending~\cref{theorem:lifting:optimaltransport}: this provides additional insights on~\cref{example:lifting:localnoise}, about what goes wrong and what could be a possible approach or formulation that might, instead, work.

Introducing local noise, the stage cost in \eqref{equation:lifting:stagecost} becomes
\begin{displaymath}
\stagecost{}(\mu,u) = \int_{\statespace{}}\int_{\statespace{}^+}\stagecostsmall{}(x, u(x), w_l)\d\nu(w_l)\d\mu(x) = \expectedValue{\mu}{\int_{\statespace{}^+}\stagecostsmall{}(x,u(x), w_l)\d\nu(w_l)},
\end{displaymath}
and the dynamic programming formulation is:
\begin{displaymath}
\costtogosmall{}(x, y) = \min_{u_{xy} \in \inputspace{}}\int_{\statespace{}^+}\stagecostsmall{}(x,u_{xy},w_l) + \alpha\costtogosmall{}^+(\dynamics{}(x,u_{xy},w_l), y)\d\nu(w_l),
\end{displaymath}
with $\alpha \in [0,1]$.

Then, with the same settings of \cref{theorem:lifting:optimaltransport}, it holds that:
\begin{displaymath}
\begin{aligned}
\costtogo{}(\mu) 
&= \min_{u \in \functorprobabilityinput{\inputspace{}}}
\stagecost{}(\mu, u) 
+ \alpha\costtogo{}^+(\dynamicsbig{\nu}(\mu, u))\\
&= 
\stagecost{}(\mu, u^*) 
+ \alpha \min_{\plan{\gamma_\nu^+} \in \setPlans{\dynamicsbig{\nu}(\mu,u^*)}{\mu^\ast}}\int_{\statespace{}^+\times\statespace{\ast}} \costtogosmall{}^+(y,z)\d\plan{\gamma_\nu^+}(y,z)\\
&= 
\stagecost{}(\mu, u^*) 
+ \alpha \min_{\plan{\gamma_\nu} \in \setPlans{\mu\times\nu}{\mu^\ast}} \int_{(\statespace{}\times\statespace{}^+)\times\statespace{\ast}}\costtogosmall{}^+(\dynamics{}(x,u^*(x),w_l),z)\d\plan{\gamma_\nu}((x,w_l),z)\\
&= 
\int_{\statespace{}}\int_{\statespace{}^+}\stagecostsmall{}(x,u^*(x),w_l)\d\nu(w_l)\d\mu(x)\\
&\qquad\qquad\qquad+ \alpha \min_{\plan{\gamma_\nu} \in \setPlans{\mu\times\nu}{\mu^\ast}} \int_{(\statespace{}\times\statespace{}^+)\times\statespace{\ast}}\costtogosmall{}^+(\dynamics{}(x,u^*(x),w_l),z)\d\plan{\gamma_\nu}((x,w_l),z)\\
&= 
\int_{\statespace{}\times\statespace{}^+}\stagecostsmall{}(x,u^*(x),w_l)\d\nu\times\mu(x,w_l)\\
&\qquad\qquad\qquad+ \alpha \min_{\plan{\gamma_\nu} \in \setPlans{\mu\times\nu}{\mu^\ast}} \int_{(\statespace{}\times\statespace{}^+)\times\statespace{\ast}}\costtogosmall{}^+(\dynamics{}(x,u^*(x),w_l),z)\d\plan{\gamma_\nu}((x,w_l),z)\\
&= 
\min_{\plan{\gamma_\nu} \in \setPlans{\mu\times\nu}{\mu^\ast}} \int_{(\statespace{}\times\statespace{}^+)\times\statespace{\ast}}
\stagecostsmall{}(x,u^*(x),w_l)\\
&\qquad\qquad+ \alpha \costtogosmall{}^+(\dynamics{}(x,u^*(x),w_l),z)\d\plan{\gamma_\nu}((x,w_l),z).
\end{aligned}
\end{displaymath}

That is, we can show that the cost to go $\costtogo{}$ is always an optimal transport cost also in this case. However, we cannot substitute the cost expression with $\costtogosmall{}$ because the noise measure $\nu$ is coupled in the optimal transport plan.

Let $\gamma_\nu^\ast \in \setPlans{\mu\times\nu}{\mu^\ast}$ and $\gamma^\ast = \pushforward{(\id\times T)}{\mu} \in \setPlans{\mu}{\mu^\ast}$ be optimal transport plans:
\begin{displaymath}
\begin{aligned}
\costtogo{}(\mu) &= \int_{(\statespace{}\times\statespace{}^+)\times\statespace{\ast}}
\stagecostsmall{}(x,u^*(x),w_l)
+ \alpha \costtogosmall{}^+(\dynamics{}(x,u^*(x),w_l),z)\d\plan{\gamma_\nu^\ast}((x,w_l),z),\text{ and}\\
\widetilde{\costtogo{}}(\mu) &= \min_{\plan{\gamma} \in \setPlans{\mu}{\mu^\ast}}\int_{\statespace{}\times\statespace{\ast}} \costtogosmall{}(x,z)\d\plan{\gamma} = \int_{\statespace{}\times\statespace{\ast}} \costtogosmall{}(x,z)\d\plan{\gamma^\ast}.
\end{aligned}
\end{displaymath}
Then, using ~\cref{theorem:disintegration} we can equivalently write:
\begin{equation}\label{equation:lifting:disintegration:noisedependentoncoupling}
\begin{aligned}
\costtogo{}(\mu) 
= 
\int_{\statespace{}\times\statespace{\ast}}
\int_{\statespace{}^+}
\stagecostsmall{}(x,u^*(x),w_l) + \alpha\costtogosmall{}^+(\dynamics{}(x,u^*(x),w_l),z)\d\plan{\gamma_\nu^{\ast xz}}(w_l)\d\plan{\gamma^\ast}(x,z),
\end{aligned}
\end{equation}
and
\begin{equation}\label{equation:lifting:disintegration:noisedependenttarget}
\begin{aligned}
\costtogo{}(\mu) 
= 
\int_{\statespace{}^+}
\int_{\statespace{}\times\statespace{\ast}}
\stagecostsmall{}(x,u^*(x),w_l) + \alpha\costtogosmall{}^+(\dynamics{}(x,u^*(x),w_l),z)\d\plan{\gamma_\nu^{\ast w_l}}(x,z)\d\nu(w_l).
\end{aligned}
\end{equation}

If $\plan{\gamma_\nu^{\ast xz}} = \nu$ in \eqref{equation:lifting:disintegration:noisedependentoncoupling}, then we would obtain $\costtogo{} \equiv \widetilde{\costtogo{}}$. However, this is in general not the case, as shown in \cref{example:lifting:localnoise}. Characterizing sufficient conditions under which the equality holds would allow to extend the scope of \cref{theorem:lifting:optimaltransport}.

Assume that $\nu\ae{}$ $\plan{\gamma_\nu^{\ast w_l}}$ is induced by a transport map $T_{w_l}: \statespace{}\to\statespace{\ast}$. Define $T:\statespace{}\times\statespace{}^+\to\statespace{\ast}$ as $T(x,w_l) = T_{w_l}(x)$. Then, \eqref{equation:lifting:disintegration:noisedependentoncoupling} becomes:
\begin{displaymath}
\begin{aligned}
\costtogo{}(\mu)
&=
\int_{\statespace{}^+}
\int_{\statespace{}}
\stagecostsmall{}(x,u^*(x),w_l) + \alpha\costtogosmall{}^+(\dynamics{}(x,u^*(x),w_l),T(x,w_l))\d\mu(x)\d\nu(w_l)\\
&=
\int_{\statespace{}}
\int_{\statespace{}^+}
\stagecostsmall{}(x,u^*(x),w_l) + \alpha\costtogosmall{}^+(\dynamics{}(x,u^*(x),w_l),T(x,w_l))\d\nu(w_l)\d\mu(x),
\end{aligned}
\end{displaymath}
where in the last step we used Tonelli's theorem \cite[Theorem 11.28]{Aliprantis2006}. Namely, we obtain a possible starting point for a computational approach to solve the optimal control problem in probability spaces:
\begin{displaymath}
\begin{aligned}
\costtogo{}(\mu) 
&= 
\min_{T}\expectedValue{\mu,\nu}{\costtogosmall{}(x,T(x,w_k))}\\
&=
\min_{T}\expectedValue{\mu}{\min_{u_x}\expectedValue{\nu}{\stagecostsmall{}(x,u_x,w_l) + \alpha\costtogo{}^+(\dynamics{}(x,u_x,w_l),T(x,w_l))}}.
\end{aligned}
\end{displaymath}

Let $\mu, \nu \in \spaceProbabilityBorelMeasures{\statespace{}}$. Further analysis on the expression of $\costtogo{}(\mu)$ yields to the conclusion that the failing step is that, in general
\begin{displaymath}
\begin{aligned}
\heartsuit &= \min_{\plan{\gamma} \in \setPlans{\convolution{\mu}{\nu}}{\mu^\ast}}
\int_{\statespace{}^+\times\statespace{\ast}}\costtogosmall{}(y,z)\d\plan{\gamma}(y,z) 
\\
&\neq 
\min_{\plan{\gamma} \in \setPlans{\mu}{\mu^\ast}}
\int_{\statespace{}\times\statespace{\ast}}
\int_{\statespace{}}
\costtogosmall{}(x+w_l,z)
\d\nu(w_l)\d\plan{\gamma}(x,z) = \diamondsuit.
\end{aligned}
\end{displaymath}

For the sake of intuition, consider now a discrete setting. Roughly speaking, the number of particles in $\convolution{\mu}{\nu}$ are more than in $\mu$. This increases the degrees of freedom of the transport plan in $\heartsuit$ compared to $\diamondsuit$, and it is not compensated, in general, by the ``smoothing'' on the cost structure. Indeed, we can prove that $\diamondsuit \geq \heartsuit$:
\begin{displaymath}
\begin{aligned}
\diamondsuit 
&= 
\int_{\statespace{}\times\statespace{\ast}}
\int_{\statespace{}}
\costtogosmall{}(x+w_l,z)
\d\nu(w_l)\d\plan{\gamma^\ast}(x,z)\\
&=
\int_{\statespace{}\times\statespace{\ast}}
\costtogosmall{}(y,z)
\d\pushforward{(+, \id)}{\pushforward{(\id,\pi_{\statespace{}}\times\pi_{\statespace{\ast}}}{\nu\times\plan{\gamma^\ast}}}(y,z).
\end{aligned}
\end{displaymath}

Since $\convolution{\mu}{\nu} = \pushforward{+}{\mu\times\nu}$, it follows that $\pushforward{(+, \id)}{\pushforward{(\id,\pi_{\statespace{}}\times\pi_{\statespace{\ast}}}{\nu\times\plan{\gamma^\ast}}}(y,z) \in \setPlans{\convolution{\mu}{\nu}}{\mu^\ast}$ and thus, $\diamondsuit \geq \heartsuit$.

When we perform the lifting, the above consideration means that different particles might have their mass split to different destinations at the next stage: this cannot be embedded in a traditional dynamic programming recursion and thus, the lifted solution becomes suboptimal. It can be noticed that this is indeed what happens in \cref{example:lifting:localnoise}.

These observations hint that a possible alternative formulation, amenable of lifting in a more general setting, would have a dynamic programming recursion of the form:
\begin{displaymath}
\costtogo{}(\mu) = \min_{u \in \functorprobabilityinput{\inputspace{}}} \expectedValue{\nu}{\stagecost{}(\mu,u;W) + \alpha\costtogo{}^+(\dynamicsbig{W}(\mu,u))},
\end{displaymath}
where the $\stagecost{}$ and the $\functorprobability{\dynamics{}}$ are parametrized on the collection of noise values $W$; i.e., $W$ is a realization of the collection of independent and identically distributed random variables, one for each particle, representing the local noise. Then, we take the expected value over these realizations.
\subsection{Stochastic shortest path problems}
\label{section:lifting:spp:stochastic}

In this section we study the problem of steering a probability measure in a finite space. We start formulating the (stochastic) shortest path problem on a (weighted) graph, to later show the two are equivalent. We provide a practical framework to deal with a wide class of problems, that we later instantiate a relevant study case: see \cref{example:gridworld}.

Consider a graph $\graph(\vertexSet,\edgeSet)$, where ($\vertexSet,+_{\vertexSet}$) is a finite commutative group (e.g., see \cref{example:noise:groupstructure}) and $\edgeSet$ is the, possibly time-varying, weighted edge space 
\begin{displaymath}
\edgeSet\coloneqq \left\{(i,j,w,c_{i,j,w}^k) \in \vertexSet\times\vertexSet\times\vertexSet\times\nonnegativeRealsBar \st i,j,w \in \vertexSet, k \in \naturals_N\right\}, N \in \naturals.
\end{displaymath}
The term $c_{i,j,w}^k$ denotes the cost to go from the vertex $i$ to the vertex $j$ at step $k$ when the noise $w$ is sampled. By convention, one can set $c_{i,j,w} = \infty$ if the transition is not feasible.

Given a path $Q = (i_0, \ldots, i_q)$, where $i_{k} \in \vertexSet$ for all $k \in \naturals_q$, and the noise realizations $W = (w_0, \ldots, w_{q-1})$, where $w_{k} \in \vertexSet$ for all $k \in \naturals_{q-1}$, we define its associated cost as
\begin{displaymath}
\costtogo{Q,W}^{y_{i_0}} = \sum_{h = 0}^{q-1} c_{i_{h},a_{h},w_{h}}^h + \terminalcostsmall^{y_{i_0}}(i_q),
\end{displaymath}
where $\terminalcostsmall$ is a penalty term parametrized by $y_{i_0} \in \vertexSet$ and $a_{k} \coloneqq i_{h+1} -_\vertexSet w_{h}$ are the \emph{decisions} at step $h \in \naturals_q$. Namely, what would have been the next step in the path in the absence of noise. We call $i_0$ \emph{source}, and $y_{i_0}$ \emph{destination} of the path $Q$.

The objective of this section is to study the problem of finding the best decisions to go from a set of sources to a set of destinations in a finite number of steps $N$:
\begin{problem}[Multi-source multi-destination shortest path problem]\label{problem:shortestpath}
Fix $N, H \in \naturals$. Let $\mathcal{S} = \{s_1, \ldots, s_{H}\} \subseteq \vertexSet$ be a set of sources and $\mathcal{D} = \{t_1, \ldots, t_{H}\} \subseteq \vertexSet$ be a set of destinations. The multi-source multi-destination shortest path problem then is to find the minimizers of the total expected cost:
\begin{displaymath}
\begin{aligned}
\costtogo{\mathcal{S}} 
= \min \,& \expectedValue{}{\sum_{l = 1}^H \costtogo{Q_l,W_l}^{T(s_l)}}\\
\mathrm{s.t.}\,
& T \in \mathcal{S}^\mathcal{D}\\
& i_{l,k+1} = a_{l,k} +_\vertexSet w_{l,k},\,i_{l,0} = s_{l,0}\\
& Q_l = (i_{l,k})_{k \in \naturals_N}\\
& w_{l,k} \sim \nu_k\\
& W_l = (w_{l,k})_{k \in \naturals_{N-1}},
\end{aligned}
\end{displaymath}
where $\nu_k \in \spaceProbabilityBorelMeasures{\vertexSet}$ are fixed and known
\end{problem}

\begin{remark}
$\tilde{N} < N$ steps might be enough for some source-destination pairs. To model the termination case, one can introduce cost-free self-loops. Then, a proper choice of $\terminalcostsmall$ suffices. However, notice that fixing a horizon $N$ for all pairs is a more reasonable choice when noise is involved: it is desirable to find a path that keeps each particle close to the target destination until the end of the horizon.
\end{remark}

The following example show how to instantiate the abstract formulation of additive noise with respect to the group structure on the state space, which in this section coincides with the set of vertices:

\begin{example}\label{example:noise:groupstructure}
For a fix $H \in \naturals$ take $N_1, \ldots, N_H \in \naturals$. Let $\statespace{} = \naturals_{N_1}\times\ldots\times\naturals_{N_H}$. Define $+_{\statespace{}}: \statespace{}\times\statespace{}\to\statespace{}$ as
\begin{displaymath}
(a_1, \ldots, a_H) +_{\statespace{}} (b_1, \ldots, b_H) = (\modulo{a_1 + b_1}{N_1}, \ldots, \modulo{a_H + b_H}{N_H}).
\end{displaymath}
Then, $(\statespace{}, +_{\statespace{}})$ is a commutative group. With a noise measure $\nu \in \spaceProbabilityBorelMeasures{\noisespace{}}$ that assigns probability with an inverse relation to the absolute values of the entries of a touple $w \in \noisespace{} = \statespace{}$ we obtain an intuitive formulation: a periodic multi-dimensional grid world where the agent drifts due to the noise on the neighbouring cells. For instance, $\nu$ defined as
\begin{displaymath}
\nu((w_1, \ldots, w_H)) = \begin{cases}
0\quad&\text{if }w_i > 1 \text{ for some } i \in \{1, \ldots, H\}\\
\frac{1}{2^H}&\text{otherwise}
\end{cases}
\end{displaymath}
moves the agents only on neighbouring cells (also on diagonals) and 
\begin{displaymath}
\nu((w_1, \ldots, w_H)) = \begin{cases}
0\quad&\text{if } \sum_{i = 1}^H w_i > 1\\
\frac{1}{H}&\text{otherwise}
\end{cases}
\end{displaymath}
only to cells sharing a side with the current location, in both cases with uniform probability.
\end{example}

\cref{example:noise:groupstructure} is an easy, yet interesting, noise formulation: it is arguably the most intuitive for all the shortest path problems on implicit graphs. 

\cref{problem:shortestpath} can be formulated as \cref{problem:finitehorizon}. Let $\statespace{k} = \inputspace{k} = \vertexSet$, for all $K \in \naturals_N$. The dynamics is $\dynamics{k}(x,u,w) = u +_\vertexSet w$, where $w \sim \nu \in \spaceProbabilityBorelMeasures{\vertexSet}$. The probability measure $\mu_k$ and the target measure $\mu^\ast$ are then empirical measures describing $(i_{l,k})_{l \in \{1,2,\ldots,H\}}$ and $\mathcal{D}$, respectively:
\begin{displaymath}
\mu_k = \frac{1}{H}\sum_{l = 1}^H \delta_{i_{l,k}}\quad\text{ and }\quad\mu^\ast = \frac{1}{|\mathcal{D}|}\sum_{t \in \mathcal{D}}\delta_t.
\end{displaymath}
\cref{problem:shortestpath} can be written as a finite-horizon optimal control problem with a cosmetic transformation: the stage cost becomes $\stagecostsmall{k}(x,u_{xy},w) = c^k_{x,u_{x},w}$, and the terminal cost is $\terminalcostsmall(x,y) = \terminalcostsmall^y(x)$; given the continuity of the terminal cost and of the stage cost, and the compactness of the action space and noise space (finite), we can solve it via dynamic programming. Let $\costtogosmall{k}(x,y)$ be the cost to go at stage $k$ from node $x$ to node $y$, and $u_k(x,y)$ be the corresponding optimal control law.

The map $T_N$ in \cref{problem:shortestpath} is relevant only at the last stage; it can thus be embedded in the terminal cost as:
\begin{displaymath}
\terminalcost(\mu_N) = \min_{\plan{\gamma_N} \in \setPlans{\mu_N}{\mu^\ast}} \int_{\statespace{}\times\statespace{}} \terminalcostsmall^y(x)\d\plan{\gamma_N}(x,y) = \int_{\statespace{}} \terminalcostsmall^{T_N(x)}(x)\d\mu_N(x).
\end{displaymath}

The stage cost at the $\nth{k}$ stage is
\begin{displaymath}
\stagecost{k}(\mu_k, u) = \expectedValue{\mu}{\int_{\noisespace{k}} \stagecostsmall{k}(x,u_k(x),w)\d\nu_k(x)}.
\end{displaymath}

The following toy example instantiates the previous results in a comprehensive study case:

% \input{graphics/figures/gridworld_description}
% \input{graphics/figures/gridworld_instance}

\begin{example}[Forest ride]\label{example:gridworld}
We want to drive a swarm of quadcopters through a forest so that they reach a final target configuration; see \cref{fig:example:gridworld:description}. The flags represent the target locations, which are not assigned a priori to a specific drone, and the green blobs are \emph{obstacle} to avoid (e.g., trees). Let $H$ be the height of the grid and $W$ be its width.

The state space is the set of all the cells $\statespace{k} = \statespace{} = \naturals_{W-1}\times\naturals_{H-1}$in the grid world, and is endowed with the group structure described in \cref{example:noise:groupstructure}.

The simulation lasts $N$ steps. At every step the agents can either go \texttt{LEFT}, \texttt{RIGHT}, \texttt{UP}, \texttt{DOWN} or they can \texttt{HOVER} on the current cell. These five actions thus represent the input space $\inputspace{k} = \inputspace{}$ at every stage $k \in \naturals_{N-1}$. In general, the dynamics is noisy. In particular, the noise is additive and the noise space corresponds to the state space, since the latter is time invariant.

Conveniently, for an element $e = (e_h,e_v) \in \statespace{}$ (i.e., state or noise realization) we write
\begin{displaymath}
\pi_h(e) \coloneqq e_h\qquad\text{ and }\qquad\pi_v(e) = e_v,
\end{displaymath}
and we define the function $l: \statespace{} \to \{\text{\texttt{FREE}}, \text{\texttt{TREE}}, \text{\texttt{FLAG}}\}$ as the \emph{label function} that reveals the status of a cell $e \in \statespace{}$.

We distinguish two formulations based on the noise measure, which can be defined point-wise (it is a discrete probability measure):
\begin{enumerate}
    \item in the deterministic formulation, we have $\nu = \delta_{(0,0)}$.
    \item in the noisy formulation, the noise measure is
\begin{displaymath}
\nu(w) = \begin{cases}
1/3\quad&\text{if }\pi_h(w) = 0 \text{ and } \pi_v(w) \in \{H-1,0,1\}\\
0&\text{otherwise},
\end{cases}
\end{displaymath}
    Namely, the noise possibly shifts \texttt{UP} or \texttt{DOWN} the quadcopter.
\end{enumerate}

The cost an agent incurs depend on the current state, the input and the noise: if the resulting (next) position is a \texttt{FREE} cell (or a \texttt{FLAG}), then it incurs $+1$ if it applied an input $u \neq \text{\texttt{HOVER}}$ and $0$ it no input was applied ($u = \text{\texttt{HOVER}}$); otherwise a much larger cost (e.g., $+100$): the quadcopter has to perform an expensive manouver to avoid crashing.

The grid world is periodic: when a quadcopter flies outside the grid it enters from the opposite side. To prevent a trivial solution when the agent crosses the red line on the left it incurs an infinite penalty. Practically, this corresponds to restrict the set of available actions for the subset of states in the first column of the grid. Notice that with other penalties one can make the grid world non periodic -- the agents will stay away from the borders.

Conveniently, for an input $u \in \inputspace{}$ we write:
\begin{displaymath}
\pi_h(u) \coloneqq \begin{cases}
1\quad&\text{if } u = \text{\texttt{RIGHT}}\\
W-1&\text{if } u = \text{\texttt{LEFT}}\\
0&\text{otherwise}
\end{cases}\qquad\text{ and }\qquad
\pi_v(u) \coloneqq \begin{cases}
1\quad&\text{if } u = \text{\texttt{UP}}\\
W-1&\text{if } u = \text{\texttt{DOWN}}\\
0&\text{otherwise}.
\end{cases}
\end{displaymath}

Hence, the dynamics at every stage $k$ becomes
\begin{displaymath}
\dynamics{k}(x, u, w) = \dynamics{}(x, u, w) = x + (\pi_h(u),\pi_v(u)) + w,
\end{displaymath}

and the cost function is
\begin{displaymath}
\stagecostsmall{k}(x, u, w) = \stagecostsmall{}(x, u, w) = \begin{cases}
+100\quad&\text{if }l(\dynamics{}(x,u,w)) = \text{ \texttt{TREE}}\\
+\infty&\text{if }\pi_h(x) = 0\text{ and }l(\dynamics{}(x,u,w)) = \text{ \texttt{FLAG}}\\
0&\text{if }l(\dynamics{}(x,u,w)) = \text{ \texttt{FREE} and } u = \text{ \texttt{HOVER}}\\
+1&\text{otherwise}.
\end{cases}
\end{displaymath}

With the notation of the shortest path problems, we set $\vertexSet = \statespace{}$ and we have that the weights of the edges are:
\begin{displaymath}
c_{i,a,w}^h = c_{i,a,w} = \begin{cases}
\infty\quad&\text{if }(\pi_h\times\pi_v)^{-1}(a -_\vertexSet i) = \emptyset\\
\stagecostsmall{}(i,u,w)&\text{otherwise, with }u \in (\pi_h\times\pi_v)^{-1}(a -_\vertexSet i),w),
\end{cases}
\end{displaymath}
where $(\pi_h\times\pi_v)^{-1}(a -_\vertexSet i) = \emptyset$ if and only if the decision $a$ is out of reach from node $i$; that is, if there is no $u \in \inputspace{}$ such that $i + (\pi_h(u),\pi_v(u)) = a$ or equivalently, if $i$ and $a$ are not adjacent in the grid. Notice that if $(\pi_h\times\pi_v)^{-1}(a -_\vertexSet i) \neq \emptyset$, then it is a singleton.

After $N$ steps, the swarm incurs the optimal transport cost to the desired configuration, with cost $\terminalcostsmall(x,y)$ defined via a large coefficient (e.g., $+1000$) multiplied by the Manhattan distance between $x$ and $y$. Notice that if the setting were deterministic, and the coefficient was $\infty$, with the convention $\infty \cdot 0 = 0$, we would enforce that the agents reach the target destination. In a stochastic setting, this is not a reasonable formulation. Our results encompass both hard and soft final constraints.
\end{example}

In the deterministic formulation (i) of \cref{example:gridworld}, we can apply \cref{theorem:lifting:optimaltransport}: to solve the optimal control problem in probability spaces it suffices to solve the easy dynamic programming problem on the resulting graph and then lift the solution to find the optimal control law for the swarm via optimal transport. Clearly, it is possible that two quadcopter are in the same location. Practically, we implemented the approach described in \cref{remark:spp:transportmap:existence}. 

Some snapshots of a realization of the resulting simulation are provided in \cref{fig:example:gridworld:nonnoisy}, where the time evolution is color-coded from red (start) to blue (end).

\begin{remark}
The dynamic programming table needed for the lifting can also be computed efficiently with the Floyd-Warshall algorithm \cite[$\S$ 25.2]{Cormen2009}.
\end{remark}

\cref{theorem:lifting:optimaltransport} not only gives us a feedback law in the particles space, but also in the probability measures space: (possibly) at each state one can compute again the optimal coupling and update the particles trajectory. This is relevant not only for robustness to unmodelled noise on both microscopic and macroscopic scales, but it also opens the possibility of time-varying targets:
whenever the target distribution $\mu^\ast$ changes, the optimal transport problem is solved again.

Therefore, the toy example \cref{example:gridworld} becomes fundamentally important for a wide number of applications related to fleet control, such as \gls*{acr:amod} \cite{Zardini2021} or warehouse automation.
The case where a forecast for the future target distribution is available is part of ongoing research and will be included in future work.

In general, the noisy formulation (ii) of \cref{example:gridworld} requires the solution of the dynamic programming equation as described in \cref{chapter:dp:finitehorizon}. However, one could also approach the problem as in the deterministic formulation (i), solving a stochastic dynamic programming recursion in the ground space. The resulting trajectory of the swarm in this case is shown in \cref{fig:example:gridworld:noisy}. However, as discussed in \cref{section:lifting:noise}, this might be suboptimal. Moreover, with noisy dynamics, the need for feedback in probability space becomes fundamental to avoid further suboptimality:
\begin{example}\label{example:cornercase:worseningwithoutfeedback}
Let $N = 2$ and $\statespace{k} = \statespace{} = \{0, 1\}$ with the group structure introduced in \cref{example:noise:groupstructure}. The input space is $\inputspace{k} = \inputspace{} = \statespace{}$, and the dynamics is the integrator dynamics: $\dynamics{k}(x, u, w) = \dynamics{}(x, u, w) = u + w$. Then, the cost penalizes switching attempts:
\begin{displaymath}
\stagecostsmall{k}(x, u, w) = \stagecostsmall{}(x, u, w) = 
\begin{cases}
0\quad&\text{if } x = u\\
1&\text{otherwise}.
\end{cases}
\end{displaymath}
We enforce the final distribution constraint, namely:
\begin{displaymath}
\terminalcostsmall(x, y) = \begin{cases}
0\quad&\text{if } x = y\\
\infty&\text{otherwise}.
\end{cases}
\end{displaymath}
Finally, the distribution of the noise is time-varying: 
\begin{displaymath}
\nu_0 = \frac{1}{2}\delta_0 + \frac{1}{2}\delta_1\qquad\text{and}\qquad\nu_1 = \delta_0,
\end{displaymath}
where $\nu_1 = \delta_0$ to ensure the well-posedness of the problem, since we enforce the final distribution constraint as a hard constraint.

Then, clearly the optimal control law is:
\begin{displaymath}
u_0(x,y) = x\qquad\text{and}\qquad u_1(x,y) = y.
\end{displaymath}

Let $\mu_0 = \mu_\ast = \frac{1}{2}\delta_0 + \frac{1}{2}\delta_1$. Then, any optimal transport map is optimal. Take for instance $T(x) = x$. If at the first stage the realization of the noise make the system switch, at the second stage we will incur a cost of $2$ without feedback, whereas the re-dispatch would lead to a cost of $0$. In this corner-case, there is an infinite worsening without feedback in the probability space (i.e., without solving an additional optimal transport problem).
\end{example}

\begin{remark}
\Cref{example:cornercase:worseningwithoutfeedback} provides a perspective on the role of optimal transport in the control law: it acts as a feedback component in the probability space.
\end{remark}

To use the results of \cref{section:dp}, we need lower semi-continuity of the costs. This property is preserved by the definitions in~\eqref{equation:lifting:terminalcost} and~\eqref{equation:lifting:stagecost}:
\begin{lemma}[Semi-continuity is preserved by the lifting]\label{lemma:lifting:lsc}
Let $l: \statespace{}\times\inputspace{}\times\statespace{N}$ such that $l_y \in \lsc{\statespace{}\times\inputspace{}}{\nonnegativeRealsBar}$. Then, for any $\mu^\ast \in \spaceProbabilityBorelMeasures{\statespace{N}}$, the map
\begin{displaymath}
\begin{aligned}
L_{\mu^\ast}&: \spaceProbabilityBorelMeasures{\statespace{}}\times\C{0}{\statespace{}\times\statespace{N}}{\inputspace{}} \to \nonnegativeRealsBar\\
(\mu, u) &\mapsto L_{\mu^\ast}(\mu,u) = \inf_{\plan{\gamma}\in\setPlans{\mu}{\mu^\ast}} \int_{\statespace{}\times\statespace{N}} l(x, u(x,y), y) \d\plan{\gamma}(x,y)
\end{aligned}
\end{displaymath}
is lower semi-continuous.
\end{lemma}
\begin{proof}
Consider the arbitrary converging sequences
\begin{displaymath}
\begin{aligned}
&(\mu_n)_{n \in \naturals} \subset \spaceProbabilityBorelMeasures{\statespace{}}, \mu_n \narrowconvergence \mu \in \spaceProbabilityBorelMeasures{\statespace{}}\quad\text{and}\\
&(u_m)_{m \in \naturals} \subset \C{0}{\statespace{}\times\statespace{N}}{\inputspace{}}, u_m \to u \in \C{0}{\statespace{}\times\statespace{N}}{\inputspace{}}.
\end{aligned}
\end{displaymath}
We want to prove that
\begin{displaymath}
\liminf_{n \to \infty} \inf_{\plan{\gamma_n}\in\setPlans{\mu_n}{\mu^\ast}} \int_{\statespace{}\times\statespace{N}} l(x, u_n(x,y), y) \d\plan{\gamma_n}(x,y) = L_{\mu^\ast}(\mu_n,u_n) \geq L_{\mu^\ast}(\mu,u).
\end{displaymath}
Let $\Lambda \subseteq \naturals$ be the subsequence yielding the $\liminf$. Then we have:
\begin{displaymath}
\begin{aligned}
L_{\mu^\ast}(\mu_n,u_n)
&= \liminf_{n \to \infty}\inf_{\plan{\gamma_n}\in\setPlans{\mu_n}{\mu^\ast}} \int_{\statespace{}\times\statespace{N}} l(x, u_n(x,y), y) \d\plan{\gamma_n}(x,y) \\
\overset{\heartsuit}&{=} \liminf_{n \to \infty}\sup_{\phi_n, \psi_n} \int_{\statespace{}} \phi_n(x)\d\mu_n(x) - \int_{\statespace{N}}\psi_n(y)\d\mu^\ast(y)\\
\overset{\clubsuit}&{\geq} \liminf_{n \to \infty}\int_{\statespace{}} \phi^\ast(x)\d\mu_n(x) - \int_{\statespace{N}}\psi^\ast(y)\d\mu^\ast(y)\\
\overset{\diamondsuit}&{=} \int_{\statespace{}} \phi^\ast(x)\d\mu(x) - \int_{\statespace{N}}\psi^\ast(y)\d\mu^\ast(y)\\
\overset{\heartsuit}&{\geq} \inf_{\plan{\gamma}\in\setPlans{\mu}{\mu^\ast}} \int_{\statespace{}\times\statespace{N}} l(x, u(x,y), y) \d\plan{\gamma}(x,y)-\varepsilon \\
&= L_{\mu^\ast}(\mu,u)-\varepsilon,
\end{aligned}
\end{displaymath}
where in:
\begin{itemize}
    \item[$\heartsuit$] we used~\cref{theorem:kantorovichduality} (Kantorovich duality);
    \item[$\clubsuit$] we used the hypothesis of $l$ being $\lowersemicontinuous$: $l(x, u_n(x,y), y) \geq l(x, u(x,y), y)$ and thus, for sufficiently large $n$, the bounded $\varepsilon$-optimizers $\phi^\ast, \psi^\ast$ for the cost $l(x,u(x,y),y)$ are viable as candidate $\phi_n, \psi_n$, yielding the desired inequality; and
    \item[$\diamondsuit$] we used the definition of narrow convergence, since $\phi^\ast$ and $\psi^\ast$ are bounded.
\end{itemize}
Finally, we let $\varepsilon\to0$ to conclude. 
\end{proof}

Clearly, the results provided in~\cref{lemma:lifting:lsc} are comprehensive of our setting and thus, the cost terms in~\eqref{equation:lifting:terminalcost} and~\eqref{equation:lifting:stagecost} are lower semi-continuous.

\bibliographystyle{siamplain}
\bibliography{references}